\title{Iteration Complexity of Randomized Block-Coordinate Descent Methods for Minimizing a Composite Function\footnote{The work of the first author was supported in part by EPSRC grant EP/I017127/1 ``Mathematics for vast digital resources''. The second author was supported in part by the Centre for Numerical Algorithms and Intelligent Software (funded by EPSRC grant EP/G036136/1 and the Scottish Funding Council).}}
\author{Peter Richt\'arik\footnote{School of Mathematics, University of Edinburgh, UK, email: peter.richtarik@ed.ac.uk} \qquad \qquad   Martin Tak\'a\v{c}\footnote{School of Mathematics, University of Edinburgh, UK, email: m.takac@sms.ed.ac.uk}\\\\\emph{School of Mathematics}\\\emph{University of Edinburgh}\\\emph{United Kingdom}}
\date{April 2011 (revised on July 4th, 2011)}
\newcommand{\nadpis}[1]{\subsubsection*{#1}}
\let\la=\langle
\let\ra=\rangle
\newcommand{\st}{\;:\;}
\newcommand{\ve}[2]{\langle #1 ,  #2 \rangle}
\newcommand{\eqdef}{\stackrel{\text{def}}{=}}
\newcommand{\R}{\mathbf{R}}
\newcommand{\Prob}{\mathbf{P}}
\newcommand{\E}{\mathbf{E}}
\newcommand{\vc}[2]{#1^{(#2)}}
\newcommand{\ncs}[2]{\|#1\|^2_{(#2)}}
\newcommand{\Rw}[2]{\mathcal R_{#1}(#2)}
\newcommand{\Rws}[2]{\mathcal R^2_{#1}(#2)}
\newcommand{\nbp}[2]{\|#1\|_{(#2)}}   
\newcommand{\nbd}[2]{\|#1\|_{(#2)}^*} 
\newcommand{\lf}{\mathcal L}
\newcommand{\U}{U}
\newcommand{\N}{N}
\newcommand{\Lip}{L}
\newcommand{\nnz}[1]{\|#1\|_0}
\DeclareMathOperator{\dom}{dom}         
\DeclareMathOperator{\tr}{tr}           
\DeclareMathOperator{\Diag}{Diag}       
\theoremstyle{plain}
\newtheorem{theorem}{Theorem}
\newtheorem{lemma}[theorem]{Lemma}
\newtheorem{example}{Example}
\theoremstyle{definition}
\begin{document}
\maketitle

\begin{abstract}
In this paper we develop a randomized block-coordinate descent method for minimizing the sum of a smooth and a simple nonsmooth block-separable convex function and prove that it obtains an $\epsilon$-accurate solution with probability at least $1-\rho$ in at most $O(\tfrac{n}{\epsilon} \log \tfrac{1}{\rho})$ iterations, where $n$ is the number of blocks. For strongly convex functions the method converges linearly. This extends recent results of Nesterov [Efficiency of coordinate descent methods on huge-scale optimization problems, CORE Discussion Paper \#2010/2], which cover the smooth case, to composite minimization, while at the same time improving the complexity by the factor of 4 and removing $\epsilon$ from the logarithmic term. More importantly, in contrast with the aforementioned work in which the author achieves the results by applying the method to a regularized version of the objective function with an unknown scaling factor, we show that this is not necessary, thus achieving true iteration complexity bounds. In the smooth case we also allow for arbitrary probability vectors and non-Euclidean norms.  Finally, we demonstrate numerically that the algorithm is able to solve huge-scale $\ell_1$-regularized least squares and support vector machine problems with a billion variables.

\paragraph{Keywords:}  Block coordinate descent, iteration complexity, composite minimization, coordinate relaxation, alternating minimization, convex optimization, L1-regularization, large scale support vector machines.
\end{abstract}

\section{Introduction}

The goal of this paper, in the broadest sense, is to  develop efficient methods for solving structured convex optimization problems with some or all of these (not necessarily distinct) properties:
\begin{enumerate}
\item \textbf{Size of Data.} The size of the problem, measured as the dimension of the variable of interest, is so large that the computation of a single function value or gradient is prohibitive. There are several situations in which this is the case, let us mention two of them.
\begin{itemize}
    \item \textbf{Memory.} If the dimension of the space of variables is larger than the available memory, the task of forming a gradient or even of evaluating the function value may be impossible to execute and hence the usual gradient methods will not work.
    \item \textbf{Patience.} Even if the memory does not preclude the possibility of taking a gradient step, for large enough problems this step will take considerable time and, in some applications such as image processing,  users might prefer to see/have some intermediary results before a single iteration is over.
\end{itemize}

\item \textbf{Nature of Data.} The nature and structure of data describing the problem may be an obstacle in using current methods for various reasons, including the following.
\begin{itemize}
    \item \textbf{Completeness.} If the data describing the problem is not immediately available in its entirety, but instead arrives incomplete in pieces and blocks over time, with each block ``corresponding to'' one variable, it may not be realistic (for various reasons such as ``memory'' and ``patience'' described above) to wait for the entire data set to arrive before the optimization process is started.
    \item \textbf{Source.} If the data is distributed on a network not all nodes of which are equally responsive or functioning, it may be necessary to work with whatever data is available at a given time.
 \end{itemize}
\end{enumerate}

It appears that a very reasonable approach to solving \emph{some} problems characterized above  is to use \emph{(block) coordinate descent methods} (CD). In the remainder of this section we mix arguments in support of this claim with a brief review of the relevant literature and an outline of our contributions.

\subsection{Block Coordinate Descent Methods}

The basic algorithmic strategy of CD methods is known in the literature under various names such as alternating minimization, coordinate relaxation, linear and non-linear Gauss-Seidel methods, subspace correction and domain decomposition. As working with all the variables of an optimization problem at each iteration may be inconvenient, difficult or impossible for any or all of the reasons mentioned above, the variables are partitioned into manageable blocks, with each iteration focused on updating a single block only, the remaining blocks being fixed. Both for their conceptual and algorithmic simplicity, CD methods were among the first optimization approaches proposed and studied in the literature (see \cite{Bertsekas-Book} and the references therein; for a survey of block CD methods in semidefinite programming we refer the reader to \cite{SemHandbook}). While they seem to have never belonged to the mainstream focus of the optimization community, a renewed interest in CD methods was sparked recently by their successful application in several areas---training support vector machines in machine learning \cite{Lin:2008:DCDM, Lin:2008:CDMLLSVM, ShalevTewari09, Lin:2010:COMS, Lin:IEEEsurvey}, optimization \cite{Tseng:CCMCDM:Smooth, Tseng:CGDM:Nonsmooth, Tseng:CGDMLC:Nonsmooth, Tseng:CBCDM:Nonsmooth, Yun2009, Saha10finite, Nesterov:2010RCDM, Wright:ABCRRO}, compressed sensing \cite{Li:CDOMACSGA}, regression \cite{WuLange:2008}, protein loop closure \cite{Protein2003} and truss topology design \cite{RT:TTD2011}---partly due to a change in the \emph{size} and \emph{nature of data} described above.



\paragraph{Order of coordinates.} Efficiency of a CD method will necessarily depend on the balance between time spent on choosing the block to be updated in the current iteration and the quality of this choice in terms of function value decrease. One extreme possibility is a \emph{greedy} strategy in which the block with the largest descent or guaranteed descent is chosen. In our setup such a strategy is prohibitive as i) it would require all data to be available and ii) the work involved would be excessive due to the size of the problem. Even if one is able to compute all partial derivatives, it seems better to then take a full gradient step instead of a coordinate one, and avoid throwing almost all of the computed information away. On the other end of the spectrum are two very cheap strategies for choosing the incumbent coordinate: \emph{cyclic} and \emph{random}. Surprisingly, it appears that complexity analysis of a cyclic CD method in satisfying generality has not yet been done. The only attempt known to us is the work of Saha and Tewari \cite{Saha10finite}; the authors consider  the case of minimizing a smooth convex function and proceed by establishing a sequence of comparison theorems between the iterates of their method and the iterates of a simple gradient method. Their result requires an isotonicity assumption. Note that a cyclic strategy assumes that the data describing the next block is available when needed which may not always be realistic. The situation with a random strategy seems better; here are some of the reasons:
\begin{itemize}
\item[(i)] Recent efforts  suggest that complexity results are perhaps more readily obtained for randomized methods and that randomization can actually improve the convergence rate \cite{SV:Kaczmarz2009, Leventhal:2008:RMLC, ShalevTewari09}.
\item[(ii)]  Choosing all blocks with equal probabilities should, intuitively, lead to similar results as is the case with a cyclic strategy. In fact, a randomized strategy is able to avoid worst-case  order of coordinates, and hence might be preferable.
\item[(iii)] Randomized choice seems more suitable in cases when not all data is available at all times.
\item[(iv)] One may study the possibility of choosing blocks with different probabilities (we do this in Section~\ref{sec:smooth}). The goal of such a strategy may be either to improve the speed of the method (in Section~\ref{sec:exp:lasso} we introduce a speedup heuristic based on adaptively changing the probabilities), or a more realistic modeling of the availability frequencies of the data defining each block.
\end{itemize}


\paragraph{Step size.} 
Once a coordinate (or a block of coordinates) is chosen to be updated in the current  iteration, partial derivative can be used to drive the steplength in the same way as it is done in the usual gradient methods. As it is sometimes the case that the computation of a partial derivative is \emph{much cheaper and less memory demanding} than the computation of the entire gradient, CD methods seem to be promising candidates for problems described above. It is important that line search, if any is implemented, is very efficient. The entire data set is either huge or not available and hence it is not reasonable to use function values at any point in the algorithm, including the line search. Instead, cheap partial derivative and other information derived from the  problem structure should be used to drive such a method.


\subsection{Problem Description and Our Contribution}

\paragraph{The problem.}
In this paper we study the \emph{iteration complexity} of simple randomized block coordinate decent methods applied to the problem of minimizing a \emph{composite objective function}, i.e., a function formed as the sum of a smooth convex and a simple nonsmooth convex term:
\begin{equation}\label{eq:P}\min_{x\in \R^\N} F(x) \eqdef f(x) + \Psi(x).\end{equation}
We assume that this problem has a minimum ($F^*>-\infty$),  $f$  has (block) coordinate Lipschitz gradient, and $\Psi$ is a (block) separable proper closed convex extended real valued function (these properties will be defined precisely in Section~\ref{sec:prelim}). Possible choices of $\Psi$ include:

%
%

\begin{enumerate}
 \item[(i)] $\Psi \equiv 0$. This covers the case of \emph{smooth minimization}. Complexity results are given in \cite{Nesterov:2010RCDM}.
 \item[(ii)] $\Psi$ is the indicator function of a block-separable convex set (such as a box). This choice models \emph{problems with constraints on blocks of variables}; iteration complexity results are given in \cite{Nesterov:2010RCDM}.
 \item[(iii)] $\Psi(x) \equiv \lambda \|x\|_1$ for $\lambda>0$. In this case we can decompose $\R^\N$ onto $\N$ blocks. Increasing $\lambda$ encourages  the solution of \eqref{eq:P} to be sparser \cite{Wright:SRSA}. Applications abound in, for instance, machine learning \cite{Lin:2008:CDMLLSVM}, statistics \cite{Tibshirani:1996} and signal processing \cite{Li:CDOMACSGA}.
 \item[(iv)] There are many more choices such as the elastic net \cite{ZouHastie:elastic-net:2005}, group lasso \cite{YuanLin:GroupLasso, Meier:GroupLasso,QinScheinbergGoldfarb:GroupLasso} and sparse group lasso \cite{SparseGroupLasso2010}.
\end{enumerate}

\paragraph{Iteration complexity results.} Strohmer and Vershynin \cite{SV:Kaczmarz2009} have recently proposed a randomized Karczmarz method for solving overdetermined consistent systems of linear equations and proved that the method enjoys global linear convergence whose rate can be expressed in terms of the condition number of the underlying matrix. The authors claim that for certain problems their approach can be more efficient than the conjugate gradient method. Motivated by these results, Leventhal and Lewis \cite{Leventhal:2008:RMLC} studied the problem of solving a system of linear equations and inequalities and in the process gave iteration complexity bounds for a randomized CD method applied to the problem of minimizing a convex quadratic function. In their method the probability of choice of each coordinate is proportional to the corresponding diagonal element of the underlying positive semidefinite matrix defining the objective function. These diagonal elements can be interpreted as Lipschitz constants of the derivative of a restriction of the quadratic objective onto one-dimensional lines parallel to the coordinate axes. In the general (as opposed to quadratic) case considered in this paper \eqref{eq:P}, these Lipschitz constants will play an important role as well. Lin et al.~\cite{Lin:2008:CDMLLSVM} derived iteration complexity results for several smooth objective functions appearing in machine learning. Shalev-Schwarz and Tewari \cite{ShalevTewari09} proposed a randomized coordinate descent method with uniform probabilities for minimizing $\ell_1$-regularized smooth convex problems. They first transform the problem into a box constrained smooth problem by doubling the dimension and then apply a coordinate gradient descent method in which each coordinate is chosen with equal probability. Nesterov \cite{Nesterov:2010RCDM} has recently analyzed randomized coordinate descent methods in the smooth unconstrained and  box-constrained setting, in effect extending and improving upon some of the results in \cite{Leventhal:2008:RMLC, Lin:2008:CDMLLSVM, ShalevTewari09} in several ways.

While the \emph{asymptotic convergence rates} of some variants of CD methods are well understood \cite{Tseng:CCMCDM:Smooth, Tseng:CGDM:Nonsmooth, Tseng:CGDMLC:Nonsmooth, Tseng:CBCDM:Nonsmooth, Yun2009}, \emph{iteration complexity} results are very rare.
To the best of our knowledge, randomized CD algorithms for minimizing a composite function have been proposed and analyzed (in the iteration complexity sense) in a few special cases only: a) the  unconstrained convex quadratic case \cite{Leventhal:2008:RMLC}, b) the smooth unconstrained ($\Psi\equiv 0$) and the smooth block-constrained case ($\Psi$ is the indicator function of a direct sum of boxes)  \cite{Nesterov:2010RCDM} and c) the $\ell_1$-regularized case \cite{ShalevTewari09}. As the approach in \cite{ShalevTewari09} is to rewrite the problem into a smooth box-constrained format first, the results of  \cite{Nesterov:2010RCDM} can be viewed as a (major) generalization and improvement of those in \cite{ShalevTewari09} (the results were obtained independently).

\paragraph{Contribution.} In this paper we further improve upon and extend and simplify the iteration complexity results of Nesterov \cite{Nesterov:2010RCDM}, treating the problem of minimizing the sum of a smooth convex and a simple nonsmooth convex block separable function \eqref{eq:P}. We focus exclusively on simple (as opposed to accelerated) methods. The reason for this is that the per-iteration work of the accelerated algorithm in \cite{Nesterov:2010RCDM} on huge scale instances of problems with \emph{sparse} data (such as the Google problem where sparsity corresponds to each website linking only to a few other websites or the sparse problems we consider in Section~\ref{sec:experiments}) is excessive. In fact, even the author does not recommend using the accelerated method for solving such problems; the simple methods seem to be more efficient.

Each algorithm of this paper is supported by a high probability iteration complexity result. That is, for any given \emph{confidence level} $0<\rho<1$ and \emph{error tolerance} $\epsilon>0$, we give an explicit expression for the number of iterations $k$ which guarantee that the method produces a random iterate $x_k$ for which \[\Prob(F(x_k)-F^*\leq \epsilon) \geq 1-\rho.\]

Table~\ref{T:0} summarizes the main complexity results of this paper. Algorithm~\ref{algorithm:UCD}---Uniform (block) Coordinate Descent for Composite functions (UCDC)---is a method where at each iteration the block of coordinates to be updated (out of a total of $n\leq \N$ blocks) is chosen uniformly at random. Algorithm~\ref{algorithm:RCDM-smooth}---Randomized (block) Coordinate Descent for Smooth functions (RCDS)---is a method where at each iteration block $i\in\{1,\dots,n\}$ is chosen with probability $p_i$. Both of these methods are special cases of the generic Algorithm~\ref{algorithm:NRCDM}; Randomized (block) Coordinate Descent for Composite functions (RCDC).

\begin{table}[!ht]
\begin{center}
{\footnotesize
\begin{tabular}{|ccc|}
  \hline
  Algorithm  &  Objective & Complexity\\
  \hline
  \hline
 &&\\
\begin{tabular}{c}
   Algorithm~\ref{algorithm:UCD} (UCDC) \\
   (Theorem~\ref{thm:composite_general_f})
   \end{tabular}
   &
   \begin{tabular}{c}
     convex \\
     composite
   \end{tabular}
   &
   \begin{tabular}{c}
                                                       $\tfrac{2n\max\{\Rws{\Lip}{x_0}, F(x_0)-F^*\}}{\epsilon}(1+\log \tfrac{1}{\rho})$ \\
                                                       $\tfrac{2n\Rws{\Lip}{x_0}}{\epsilon}\log \left(\tfrac{F(x_0)-F^*}{\epsilon \rho}\right)$
   \end{tabular}\\
   &&\\
  \begin{tabular}{c}
  Algorithm~\ref{algorithm:UCD} (UCDC) \\
    (Theorem~\ref{thm:nonsmooth:stornglyconvex:highprobresult}) \\
  \end{tabular}
   &
   \begin{tabular}{c}
     strongly convex \\
     composite \\
   \end{tabular}
   &
   $\max\{\tfrac{4}{\mu},\tfrac{\mu}{\mu-1}\} n \log\left(\tfrac{F(x_0)-F^*}{\rho\epsilon}\right)$\\
   &&\\
  \begin{tabular}{c}
  Algorithm~\ref{algorithm:RCDM-smooth} (RCDS) \\
    (Theorem~\ref{thm:smooth_main})
  \end{tabular}
   &
   \begin{tabular}{c}
     convex \\
     smooth \\
   \end{tabular}
   &
   $\tfrac{2\Rws{LP^{-1}}{x_0}}{\epsilon} (1 + \log \tfrac{1}{\rho}) -2$\\
   &&\\
  \begin{tabular}{c}
  Algorithm~\ref{algorithm:RCDM-smooth} (RCDS) \\
    (Theorem~\ref{thm:smooth_main_strong})
  \end{tabular}
   &
   \begin{tabular}{c}
     strongly convex \\
     smooth \\
   \end{tabular}
   &
   $\tfrac{1}{\mu}\log \left(\tfrac{f(x_0)-f^*}{\epsilon\rho}\right)$\\
   &&\\
  \hline
\end{tabular}
}
\end{center}
\caption{Summary of complexity results obtained in this paper.}
\label{T:0}
\end{table}

The symbols $P, \Lip, \Rws{W}{x_0}$ and $\mu$ appearing in Table~\ref{T:0} will be defined precisely in further sections. For now it suffices to say that $\Lip$ encodes the (block) coordinate Lipschitz constants of the gradient of $f$, $P$ encodes the probabilities $\{p_i\}$, $\Rws{W}{x_0}$ is a measure of distance of the initial iterate $x_0$ from the set of minimizers of the problem \eqref{eq:P} in a norm defined by $W$ (see Section~\ref{sec:prelim}) and $\mu$ is the strong convexity parameter of $F$ (see Section~\ref{sec:composite_strong}). In the nonsmooth case $\mu$ depends on $\Lip$ and the smooth case it depends both on $\Lip$ and $P$.

Let us now briefly outline the main similarities and differences between our results and those in  \cite{Nesterov:2010RCDM}. A more detailed and expanded discussion can be found in Section~\ref{sec:comparison}.



\begin{enumerate}
\item \textbf{Composite setting.} We consider the composite setting \eqref{eq:P}, whereas \cite{Nesterov:2010RCDM} covers the unconstrained and constrained smooth setting only.
\item \textbf{No need for regularization.} Nesterov's high probability results in the case of minimizing a function which is not strongly convex are based on regularizing the objective to make it strongly convex and then running the method on the regularized function. Our contribution here is that we show that no regularization is needed by doing a more detailed analysis using a thresholding argument (Theorem~\ref{l:randomVariableTrick}).
\item \textbf{Better complexity.} Our complexity results are better by the constant factor of 4. Also, we have removed $\epsilon$ from under the logarithm.
\item \textbf{General probabilities.} Nesterov considers probabilities $p_i$ proportional to $\Lip_i^{\alpha}$, where $\alpha\geq 0$ is a parameter. High probability results are proved in \cite{Nesterov:2010RCDM} for $\alpha \in \{0,1\}$ only. Our results in the smooth case hold for an arbitrary probability vector $p$.
\item \textbf{General norms.} Nesterov's expectation results (Theorems 1 and 2) are proved for general norms. 
However, his high probability results are proved for Euclidean norms only. In our approach all results hold for general norms.
\item \textbf{Simplification.} Our analysis is more compact. 
\end{enumerate}

In the numerical experiments  section we focus on \emph{sparse} $\ell_1$-regularized regression and support vector machine problems. For these problems we introduce a powerful \emph{speedup heuristic} based on adaptively changing the probability vector throughout the iterations (Section~\ref{sec:exp:lasso}; ``speedup by shrinking'').

\paragraph{Contents.} This paper is organized as follows. We start in Section~\ref{sec:prelim} by defining basic notation, describing the block structure of the problem, stating assumptions and describing the generic randomized block-coordinate descent algorithm (RCDC). In Section~\ref{sec:composite} we study the performance of a uniform variant (UCDC) of RCDC as applied to a composite objective function and in Section~\ref{sec:smooth} we analyze a smooth variant (RCDS) of RCDC; that is, we study the performance of RCDC on a smooth objective function.
In Section~\ref{sec:comparison} we compare known complexity results for CD methods with the ones established in this paper. Finally, in Section~\ref{sec:experiments} we demonstrate the efficiency of the method on $\ell_1$-regularized sparse regression and linear support vector machine problems.

\section{Assumptions and the Algorithm} \label{sec:prelim}

\paragraph{Block structure.} We  model the block structure of the problem by decomposing the space $\R^\N$ into $n$ subspaces as follows. Let $\U\in \R^{\N\times \N}$ be a column permutation of the $\N\times \N$ identity matrix and further let $\U = [\U_1,\U_2,\dots,\U_n]$ be a decomposition of $\U$ into $n$ submatrices, with $\U_i$ being of size $\N \times \N_i$,
where $\sum_i \N_i = \N$. Clearly, any vector $x\in \R^\N$ can be written uniquely as $x = \sum_i \U_i x^{(i)}$, where $x^{(i)}=\U_i^T x \in \R_i \equiv \R^{\N_i}$. Also note that
\begin{equation}\label{eq:U_iU_j} \U_i^T \U_j =
\begin{cases} \N_i\times \N_i \quad \text{identity matrix,} & \text{ if } i=j,\\
 \N_i\times \N_j \quad \text{zero matrix,}&  \text{ otherwise.}
 \end{cases}
\end{equation}
For simplicity we will write $x = (x^{(1)},\dots,x^{(n)})^T$. We equip $\R_i$ with a pair of conjugate Euclidean norms:
\begin{equation}\label{eq:blocknorm}\|t\|_{(i)} = \ve{B_i t}{t}^{1/2}, \qquad \nbd{t}{i} = \ve{B_i^{-1} t}{t}^{1/2}, \qquad t\in \R_i,\end{equation}
where $B_i\in \R^{\N_i\times \N_i}$ is a positive definite matrix and $\ve{\cdot}{\cdot}$ is the standard Euclidean inner product.

\begin{example} Let $n=\N$, $\N_i=1$ for all $i$ and $U = [e_1,e_2,\dots,e_n]$ be the $n\times n$ identity matrix. Then $U_i=e_i$ is the $i$-th unit vector and
$x^{(i)} = e_i^Tx\in \R_i =\R$ is the $i$-th coordinate of $x$. Also, $x = \sum_i e_ix^{(i)}$. If we let $B_i=1$ for all $i$, then $\|t\|_{(i)} = \|t\|^*_{(i)} = |t|$ for all $t\in \R$.
\end{example}

\paragraph{Smoothness of $f$.} We assume throughout the paper that the gradient of $f$ is block coordinate-wise Lipschitz, uniformly in $x$, with positive constants $\Lip_1,\dots,\Lip_n$, i.e., that for all $x\in \R^\N$, $t\in \R_i$ and $i$ we have
\begin{equation}\label{eq:f_iLipschitzder}
 \nbd{\nabla_i f(x+\U_i t)-\nabla_i f(x)}{i} \leq \Lip_i \nbp{t}{i},
  \end{equation}
where \begin{equation}\label{eq:nabla_if(x)}
\nabla_i f(x)  \eqdef (\nabla f(x))^{(i)} = \U^T_i \nabla f(x) \in \R_i.\end{equation} An important consequence of \eqref{eq:f_iLipschitzder} is the following standard inequality \cite{NesterovBook}:
\begin{equation}\label{eq:Lipschitz_ineq}f(x+\U_i t) \leq f(x) + \ve{\nabla_i f(x)}{t} + \tfrac{\Lip_i}{2}\nbp{t}{i}^2.\end{equation}

\paragraph{Separability of $\Psi$.} We assume that $\Psi$ is block separable, i.e., that it can be decomposed as follows:
\begin{equation}\label{eq:Psi_block_def}
  \Psi(x)=\sum_{i=1}^n \Psi_i(x^{(i)}),
\end{equation}
where the functions $\Psi_i:\R_i\to \R$ are convex and closed.

\paragraph{The algorithm.}

Notice that an upper bound on $F(x+\U_i t)$, viewed as a function of $t\in \R_i$, is readily available:
\begin{align}F(x+\U_i t) &\stackrel{\eqref{eq:P}}{=} f(x+\U_i t) + \Psi(x+\U_i t)\stackrel{\eqref{eq:Lipschitz_ineq}}{\leq} f(x) + V_i(x,t) + C_i(x), \label{eq:upper_bound}\end{align}
where
\begin{equation}\label{eq:V}V_i(x,t) \eqdef \ve{\nabla_i f(x)}{t} + \tfrac{\Lip_i}{2}\nbp{t}{i}^2 + \Psi_i(x^{(i)} + t)\end{equation}
and \begin{equation}\label{eq:Ci}C_i(x) \eqdef \sum_{j\neq i}\Psi_j(x^{(j)}).\end{equation}

We are now ready to describe the generic method. Given iterate $x_k$, Algorithm~\ref{algorithm:NRCDM} picks block $i_k=i\in \{1,2,\dots,n\}$ with probability $p_i>0$  and then updates the $i$-th block of $x_k$ so as to minimize (exactly) in $t$ the upper bound \eqref{eq:upper_bound} on $F(x_k +\U_{i} t)$. Note that in certain cases it is possible to minimize $F(x_k +\U_{i} t)$ directly; perhaps in a closed form. This is the case, for example, when $f$ is a convex quadratic.

\begin{algorithm}[h!]
\caption{RCDC$(p,x_0)$ (\textbf{R}andomized \textbf{C}oordinate \textbf{D}escent for \textbf{C}omposite Functions)}
\begin{algorithmic} \label{algorithm:NRCDM}
\FOR {$k=0,1,2,\dots$}
 \STATE Choose $i_{k} = i \in \{1,2,\dots,n\}$ with probability $p_i$
 \STATE
 $\displaystyle T^{(i)}(x_k) \;\eqdef\; \arg \min \{V_{i}(x_k,t)\;:\; t\in\R_i\}$
 \STATE $x_{k+1} = x_k + \U_{i}   T^{(i)}(x_k)$

\ENDFOR
\end{algorithmic}
\end{algorithm}

The iterates $\{x_k\}$ are random vectors and the values $\{F(x_k)\}$ are random variables. Clearly, $x_{k+1}$ depends only on $x_k$.
As our analysis will be based on the (expected) per-iteration  decrease of the objective function, the results will hold even if we replace $V_i(x_k,t)$ by $F(x_k +\U_{i} t)$ in Algorithm~\ref{algorithm:NRCDM}.

\paragraph{Global structure.} For fixed positive scalars $w_1,\dots,w_n$ let $W=\Diag (w_1,\dots,w_n)$ and define a pair of conjugate norms in $\R^\N$ by
\begin{equation}\label{eq:normW}\|x\|_W = \left[\sum_{i=1}^n w_i \ncs{\vc{x}{i}}{i}\right]^{1/2},\end{equation}
\begin{equation}\label{eq:dual_norm}  \|y\|_W^* = \max_{\|x\|_W\leq 1} \ve{y}{x} = \left[\sum_{i=1}^n w_i^{-1} ( \nbd{y^{(i)}}{i})^2\right]^{1/2}.\end{equation}
In the the subsequent analysis we will use $W=L$ (Section~\ref{sec:composite}) and $W = LP^{-1}$ (Section~\ref{sec:smooth}), where $L=\Diag(\Lip_1,\dots,\Lip_n)$ and $P=\Diag(p_1,\dots,p_n)$.

The set of optimal solutions of \eqref{eq:P} is denoted by $X^*$
and $x^*$ is any element of that set. Define
\[\Rw{W}{x} = \max_y \max_{x^*\in X^*} \{\|y-x^*\|_W \;:\; F(y) \leq F(x)\},\]
which is a measure of the size of the level set of $F$ given by $x$. In most of the results in this paper we will need to assume that $\Rw{W}{x_0}$ is finite for the initial iterate $x_0$ and $W=L$ or $W=LP^{-1}$.

\paragraph{A technical result.} The next simple result is the main technical tool enabling us to simplify and improve the corresponding analysis in \cite{Nesterov:2010RCDM}. It will be used with $\xi_k = F(x_k)-F^*$.

\begin{theorem}\label{l:randomVariableTrick} Let $\xi_0>0$ be a constant, $0<\epsilon<\xi_0$, and
consider a nonnegative nonincreasing sequence of (discrete) random variables $\{\xi_k\}_{k\geq 0}$ with one of the following properties:
\begin{enumerate}
\item[(i)] $\E[\xi_{k+1} \;|\; {\xi_k}] \leq \xi_k - \tfrac{\xi_k^2}{c}$, for all $k$, where $c>0$ is a constant,
\item[(ii)] $\E[\xi_{k+1} \;|\; {\xi_k}] \leq (1-\tfrac{1}{c}) \xi_k$, for all $k$ such that $\xi_k\geq \epsilon$, where $c>1$ is  a constant.
\end{enumerate}
Choose confidence level $\rho \in (0,1)$. If property (i) holds and we choose $\epsilon < c$ and
\begin{equation}\label{eq:MainTrick:k}K \geq \tfrac{c}{\epsilon} (1 + \log \tfrac{1}{\rho}) + 2   - \tfrac{c}{\xi_0},\end{equation}
or if property (ii) holds, and we choose
\begin{equation}\label{eq:MainTrick:k2}K\geq c \log \tfrac{\xi_0}{ \epsilon \rho},\end{equation}
then
\begin{equation}\label{eq:MainTrick:prob} \Prob(\xi_K \leq \epsilon) \geq 1-\rho.
\end{equation}
\end{theorem}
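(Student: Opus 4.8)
The plan is to reduce both parts to a single mechanism: a \emph{truncation trick} that converts the two one-sided recursions into honest recursions for the expectation of an auxiliary sequence, followed by Markov's inequality. Concretely, I would introduce the thresholded variables
\[
\xi_k^\epsilon \eqdef \xi_k \cdot \mathbf{1}_{[\xi_k \geq \epsilon]},
\]
equal to $\xi_k$ when $\xi_k\geq\epsilon$ and to $0$ otherwise, and set $b_k \eqdef \E[\xi_k^\epsilon]$. Two elementary facts drive everything: (a) since $\{\xi_k\}$ is nonincreasing, once $\xi_k<\epsilon$ it stays below $\epsilon$, so on $\{\xi_k<\epsilon\}$ we have $\xi_{k+1}^\epsilon=0$; and (b) on $\{\xi_k\geq\epsilon\}$ the given hypothesis applies and $\xi_{k+1}^\epsilon\leq\xi_{k+1}$. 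The payoff is the event identity $\{\xi_K\geq\epsilon\}=\{\xi_K^\epsilon\geq\epsilon\}$, whence Markov gives
\[
\Prob(\xi_K\geq\epsilon)=\Prob(\xi_K^\epsilon\geq\epsilon)\leq \frac{b_K}{\epsilon},
\]
and it suffices to drive $b_K\leq\epsilon\rho$, since then $\Prob(\xi_K\leq\epsilon)\geq 1-\Prob(\xi_K\geq\epsilon)\geq 1-\rho$.

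For property (ii) this is immediate. The case analysis (a)--(b) shows $\E[\xi_{k+1}^\epsilon\mid\xi_k]\leq(1-\tfrac1c)\xi_k^\epsilon$ for \emph{every} $k$ (the hypothesis is used on $\{\xi_k\geq\epsilon\}$ and monotonicity on its complement). Taking expectations yields $b_{k+1}\leq(1-\tfrac1c)b_k$, hence $b_K\leq(1-\tfrac1c)^K\xi_0\leq e^{-K/c}\xi_0$, and requiring $e^{-K/c}\xi_0\leq\epsilon\rho$ reproduces exactly \eqref{eq:MainTrick:k2}.

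For property (i) I would extract two complementary estimates for the truncated sequence. First, the quadratic drop survives truncation: $\E[\xi_{k+1}^\epsilon\mid\xi_k]\leq\xi_k^\epsilon-(\xi_k^\epsilon)^2/c$, so by Jensen $b_{k+1}\leq b_k-b_k^2/c$; the reciprocal telescoping $\tfrac{1}{b_{k+1}}\geq\tfrac1{b_k}+\tfrac1c$ then gives the harmonic decay $b_k\leq c\,(\tfrac{c}{\xi_0}+k)^{-1}$, so $b_k$ reaches the level $\epsilon$ after $K_1\approx\tfrac c\epsilon-\tfrac c{\xi_0}$ steps. Second, on $\{\xi_k\geq\epsilon\}$ the quadratic term dominates a geometric one, $\xi_k-\xi_k^2/c\leq(1-\tfrac\epsilon c)\xi_k$, which combined with monotonicity on the complement gives $b_{k+1}\leq(1-\tfrac\epsilon c)b_k$ for all $k$ (here the assumption $\epsilon<c$ keeps the rate in $(0,1)$). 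Chaining the geometric bound onto the harmonic one,
\[
b_K\leq\left(1-\tfrac\epsilon c\right)^{K-K_1}b_{K_1}\leq e^{-\frac\epsilon c(K-K_1)}\,\epsilon,
\]
and forcing this below $\epsilon\rho$ costs a further $K-K_1\approx\tfrac c\epsilon\log\tfrac1\rho$ iterations; summing the two phase lengths and absorbing the two ceilings into the additive $+2$ recovers \eqref{eq:MainTrick:k}.

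The step I expect to be the genuine obstacle, as opposed to bookkeeping, is the design of the truncation together with the two-phase split for (i). Using only the harmonic estimate yields a Markov bound scaling like $1/\rho$, while using only the geometric estimate introduces a $\log(\xi_0/\epsilon)$ factor; it is precisely the combination---harmonic decay down to the threshold $\epsilon$, then geometric decay for confidence amplification---that removes $\epsilon$ from the logarithm and produces the clean $1+\log\tfrac1\rho$ dependence. A minor point to handle is that the recursion together with $\xi_{k+1}^\epsilon\geq 0$ forces $b_k\leq c$ (indeed $b_k\leq\xi_0<c$ in the intended application, where $c$ is chosen so large that $\xi_0\leq c$), which is what legitimizes the reciprocal-telescoping step.
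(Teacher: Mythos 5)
Your proposal is correct and follows essentially the same route as the paper's own proof: the same thresholded sequence $\xi_k^\epsilon$, the same Markov reduction to bounding $\E[\xi_K^\epsilon]$ by $\epsilon\rho$, the same two-phase (harmonic then geometric) analysis for property (i) exploiting $\epsilon<c$, and the same direct geometric contraction for property (ii).
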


\begin{proof} Notice that the sequence $\{\xi_k^\epsilon\}_{k\geq 0}$ defined by
\[\xi_k^\epsilon = \begin{cases}\xi_k & \text{if } \xi_k\geq \epsilon,\\
0 & \text{otherwise,}\end{cases}\]
satisfies
\begin{align}\label{eq:MT0}\xi_{k}^\epsilon \leq \epsilon \quad &\Leftrightarrow \quad \xi_k \leq \epsilon, \qquad k\geq 0.
\end{align}
Therefore, by Markov inequality,
\[\Prob(\xi_k > \epsilon) =  \Prob(\xi_k^\epsilon > \epsilon) \leq \tfrac{\E[\xi_k^\epsilon]}{\epsilon},\] and hence it suffices to show that
\begin{equation}\label{eq:theta_bound}\theta_K \leq \epsilon \rho,\end{equation}
where $\theta_k \eqdef \E[\xi_k^\epsilon]$. If property (i) holds, then
\begin{equation}
\label{eq:MT1} \E[\xi^\epsilon_{k+1} \;|\; {\xi^\epsilon_k}] \leq \xi^\epsilon_k - \tfrac{(\xi^\epsilon_k)^2}{c}, \qquad \E[\xi^\epsilon_{k+1} \;|\; {\xi^\epsilon_k}] \leq (1-\tfrac{\epsilon}{c})\xi^\epsilon_k, \qquad k \geq  0,
\end{equation}
and by taking expectations (using convexity of $t\mapsto t^2$ in the first case) we obtain
\begin{eqnarray}\label{eq:MT2a}\theta_{k+1} & \leq & \theta_k - \tfrac{\theta_k^2}{c}, \qquad k\geq 0,\\
\label{eq:MT2b}\theta_{k+1}& \leq & (1-\tfrac{\epsilon}{c})\theta_k, \qquad k\geq 0.\end{eqnarray}
Notice that \eqref{eq:MT2a} is better than \eqref{eq:MT2b} precisely when $\theta_k>\epsilon$. Since
\[\tfrac{1}{\theta_{k+1}} - \tfrac{1}{\theta_k} = \tfrac{\theta_k-\theta_{k+1}}{\theta_{k+1}\theta_k} \geq \tfrac{\theta_k-\theta_{k+1}}{\theta_k^2} \stackrel{\eqref{eq:MT2a}}{\geq} \tfrac{1}{c},\]
we have $\tfrac{1}{\theta_{k}} \geq \tfrac{1}{\theta_0} + \tfrac{k}{c} = \tfrac{1}{\xi_0} + \tfrac{k}{c}$. Therefore,  if we let $k_1\geq \tfrac{c}{\epsilon}  - \tfrac{c}{\xi_0}$, we obtain $\theta_{k_1}\leq \epsilon$. Finally, letting $k_2 \geq \tfrac{c}{\epsilon}\log \tfrac{1}{\rho}$, we have
\[\theta_K \stackrel{\eqref{eq:MainTrick:k}}{\leq} \theta_{k_1+k_2} \stackrel{\eqref{eq:MT2b}}{\leq} (1-\tfrac{\epsilon}{c})^{k_2}\theta_{k_1}\leq ((1-\tfrac{\epsilon}{c})^{\tfrac{1}{\epsilon}})^{c\log \tfrac{1}{\rho}} \epsilon \leq (e^{-\frac{1}{c}})^{c\log \tfrac{1}{\rho}}\epsilon = \epsilon \rho,\]
establishing \eqref{eq:theta_bound}. 
If property (ii) holds, then $\E[\xi_{k+1}^\epsilon \;|\; \xi_k^\epsilon] \leq (1-\tfrac{1}{c})\xi_k^\epsilon$ for all $k$, and hence
\[\theta_K \leq (1-\tfrac{1}{c})^K \theta_0 = (1-\tfrac{1}{c})^K \xi_0 \stackrel{\eqref{eq:MainTrick:k2}}{\leq} ((1-\tfrac{1}{c})^{c})^{\log \tfrac{\xi_0}{\epsilon\rho}} \xi_0 \leq (e^{-1})^{\log \tfrac{\xi_0}{\epsilon\rho}} \xi_0 = \epsilon\rho,\]
again establishing \eqref{eq:theta_bound}.
\end{proof}

\paragraph{Restarting.} Note that similar, albeit \emph{slightly weaker,} high probability results can be achieved by \emph{restarting} as follows. We run the random process $\{\xi_k\}$ repeatedly $r=\lceil \log \tfrac{1}{\rho}\rceil$ times, always starting from $\xi_0$, each time for the same number of iterations $k_1$  for which $\Prob(\xi_{k_1}>\epsilon) \leq \tfrac{1}{e}$. It then follows that the probability that all $r$ values $\xi_{k_1}$ will be larger than $\epsilon$ is at most $(\tfrac{1}{e})^r \leq \rho$. Note that the restarting technique demands that we perform $r$ evaluations of the objective function; this is not needed in the one-shot approach covered by the theorem.

It remains to estimate $k_1$ in the two cases of Theorem~\ref{l:randomVariableTrick}. We argue that in case (i) we can choose $k_1 = \lceil\tfrac{c}{\epsilon/e}-\tfrac{c}{\xi_0}\rceil$. Indeed, using similar arguments as in Theorem~\ref{l:randomVariableTrick} this leads to $\E[\xi_{k_1}]\leq \tfrac{\epsilon}{e}$, which by Markov inequality implies that in a single run of the process we have \[\Prob(\xi_{k_1} > \epsilon) \leq \tfrac{\E[\xi_{k_1}]}{\epsilon} \leq  \tfrac{\epsilon/e}{\epsilon} = \tfrac{1}{e}.\] Therefore,
\[K = \lceil\tfrac{ec}{\epsilon}-\tfrac{c}{\xi_0}\rceil \lceil\log \tfrac{1}{\rho}\rceil\]
iterations suffice  in case (i). A similar restarting technique can be applied in case (ii).

\paragraph{Tightness.} It can be shown on simple examples that the bounds in the above result are \emph{tight}. 
\section{Coordinate Descent for Composite Functions} \label{sec:composite}

In this section we study the performance of Algorithm~\ref{algorithm:NRCDM} in the special case when all probabilities are chosen to be the same, i.e., $p_i=\tfrac{1}{n}$ for all $i$. For easier future reference we set this method apart and give it a name (Algorithm~\ref{algorithm:UCD}).

\begin{algorithm}[ht!]
\caption{UCDC$(x_0)$ (\textbf{U}niform \textbf{C}oordinate \textbf{D}escent for \textbf{C}omposite Functions)}
\begin{algorithmic} \label{algorithm:UCD}
\FOR {$k=0,1,2,\dots$}
 \STATE Choose $i_{k} = i \in \{1,2,\dots,n\}$ with probability $\tfrac{1}{n}$
 \STATE
 $\displaystyle T^{(i)}(x_k) = \arg \min \{V_{i}(x_k,t)\;:\; t\in\R_{i}\}$
 \STATE $x_{k+1} = x_k + \U_{i}  T^{(i)}(x_k)$
\ENDFOR
\end{algorithmic}
\end{algorithm}
The following function plays a central role in our analysis:
\begin{equation}\label{eq:H(x,t)}
H(x,T) \eqdef f(x) + \ve{\nabla f(x)}{T} + \tfrac{1}{2} \|T\|_L^2 +\Psi(x+T).
\end{equation}
Comparing \eqref{eq:H(x,t)} with \eqref{eq:V} using \eqref{eq:U_iU_j}, \eqref{eq:nabla_if(x)}, \eqref{eq:Psi_block_def} and \eqref{eq:normW} we get
\begin{equation}\label{eq:H_and_V}H(x,T) = f(x) + \sum_{i=1}^n V_i(x,T^{(i)}).\end{equation}
Therefore, the vector $T(x) = (T^{(1)}(x),\dots,T^{(n)}(x))$, with the components $T^{(i)}(x)$  defined in Algorithm~\ref{algorithm:NRCDM}, is the minimizer of $H(x,\cdot)$:

\begin{equation}\label{eq:T(x)def}T(x) = \arg\min_{T\in\R^\N} H(x,T).\end{equation}

Let us start by establishing an auxiliary result which will be used repeatedly.

\begin{lemma} \label{lem:op1} Let $\{x_k\}, \; k\geq 0$, be the random iterates generated by UCDC$(x_0)$. Then
  \begin{equation}\label{eq:op1}
  \E[F(x_{k+1})-F^* \;|\; x_k]  \leq  \tfrac{1}{n}\; (H(x_k,T(x_k))-F^*) + \tfrac{n-1}{n} \;(F(x_k)-F^*).
 \end{equation}
\end{lemma}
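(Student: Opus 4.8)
The plan is to condition on the current iterate $x_k$ and exploit the fact that in UCDC block $i$ is drawn with probability $\tfrac{1}{n}$. When block $i$ is selected the update is $x_{k+1} = x_k + \U_i T^{(i)}(x_k)$, so the per-block upper bound \eqref{eq:upper_bound} applies directly:
\[
F(x_k + \U_i T^{(i)}(x_k)) \leq f(x_k) + V_i(x_k, T^{(i)}(x_k)) + C_i(x_k), \qquad i=1,\dots,n.
\]
First I would average this over the uniform choice of $i$. Since, given $x_k$, the next iterate equals $x_k + \U_i T^{(i)}(x_k)$ with probability $\tfrac{1}{n}$, this gives
\[
\E[F(x_{k+1}) \;|\; x_k] = \tfrac{1}{n}\sum_{i=1}^n F(x_k + \U_i T^{(i)}(x_k)) \leq \tfrac{1}{n}\sum_{i=1}^n \bigl( f(x_k) + V_i(x_k,T^{(i)}(x_k)) + C_i(x_k)\bigr),
\]
so the remaining work is purely to evaluate the three sums on the right.

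The sum of the $V_i$ terms is handled immediately by the decomposition \eqref{eq:H_and_V}, which yields $\sum_{i=1}^n V_i(x_k, T^{(i)}(x_k)) = H(x_k, T(x_k)) - f(x_k)$. The step requiring the most care, and the only genuine bookkeeping in the argument, is the sum of the $C_i$ terms. Using the definition \eqref{eq:Ci} and swapping the order of summation,
\[
\sum_{i=1}^n C_i(x_k) = \sum_{i=1}^n \sum_{j\neq i} \Psi_j(x_k^{(j)}) = \sum_{j=1}^n (n-1)\,\Psi_j(x_k^{(j)}) = (n-1)\,\Psi(x_k),
\]
where each index $j$ is counted in exactly $n-1$ of the inner sums, and the last equality is separability \eqref{eq:Psi_block_def}.

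Combining the three pieces, the term $\tfrac{1}{n}\sum_i f(x_k) = f(x_k)$ absorbs the leftover $-f(x_k)$ from the $V_i$ sum, and together with $\tfrac{n-1}{n}\Psi(x_k)$ and the recognition $f(x_k)+\Psi(x_k)=F(x_k)$ I obtain
\[
\E[F(x_{k+1}) \;|\; x_k] \leq \tfrac{1}{n} H(x_k, T(x_k)) + \tfrac{n-1}{n} F(x_k).
\]
Finally, writing $F^* = \tfrac{1}{n}F^* + \tfrac{n-1}{n}F^*$ and subtracting it from both sides produces exactly \eqref{eq:op1}, expressing the expected decrease as a convex combination of $H(x_k,T(x_k))-F^*$ and $F(x_k)-F^*$. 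I expect no real analytic difficulty here; the whole content is the averaging together with the double-sum identity for the separable terms of $\Psi$, which is precisely what converts the crude per-block bound into the sharp convex-combination form needed later in the analysis.
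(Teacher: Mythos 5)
Your proposal is correct and follows essentially the same route as the paper's own proof: average the per-block upper bound \eqref{eq:upper_bound} over the uniform choice of $i$, identify $\sum_i V_i$ with $H(x_k,T(x_k))-f(x_k)$ via \eqref{eq:H_and_V}, and collapse $\sum_i C_i(x_k)$ to $(n-1)\Psi(x_k)$ using \eqref{eq:Ci} and separability. The only cosmetic difference is that you subtract $F^*$ explicitly at the end, which the paper leaves implicit.
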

\begin{proof}
\begin{eqnarray*}
\E[F(x_{k+1}) \;|\; x_k]
  &=& \sum_{i=1}^n \tfrac{1}{n} F(x_k+\U_i T^{(i)}(x_k))\\
& \stackrel{\eqref{eq:upper_bound}}{\leq}& \tfrac{1}{n}\sum_{i=1}^n  [f(x_k) + V_i(x_k,T^{(i)}(x_k)) + C_i(x_k)]\\
& \stackrel{\eqref{eq:H_and_V}}{=}& \tfrac{1}{n}H(x_k,T(x_k)) + \tfrac{n-1}{n}f(x_k) + \tfrac{1}{n}\sum_{i=1}^n C_i(x_k)\\
& \stackrel{\eqref{eq:Ci}}{=}& \tfrac{1}{n}H(x_k,T(x_k)) + \tfrac{n-1}{n}f(x_k) + \tfrac{1}{n}\sum_{i=1}^n \sum_{j\neq i}
\Psi_j(x_k^{(j)})\\
&= & \tfrac{1}{n}H(x_k,T(x_k)) + \tfrac{n-1}{n}F(x_k).
\end{eqnarray*}
\end{proof}

%
%

\subsection{Convex Objective}

In order for Lemma~\ref{lem:op1} to be useful, we need to estimate $H(x_k,T(x_k))-F^*$ from above in terms of $F(x_k)-F^*$.

\begin{lemma}\label{lem:H-Fstar} Fix $x^*\in X^*$, $x\in \dom \Psi$ and let $R = \|x-x^*\|_L$. Then
\begin{equation}\label{eq:nonsmooth:gamma1}
 H(x,T(x)) - F^* \leq \begin{cases} \left(1-\tfrac{F(x)-F^*}{2R^2}\right)(F(x)-F^*), \quad & \text{if } F(x)-F^*\leq R^2,\\
\tfrac{1}{2} R^2 < \tfrac{1}{2}(F(x)-F^*), \quad & \text{otherwise.}
\end{cases}
\end{equation}
\end{lemma}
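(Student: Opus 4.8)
The plan is to bound $H(x,T(x))$ by exploiting that $T(x)$ is the \emph{minimizer} of $H(x,\cdot)$, so I may replace $T(x)$ by any convenient suboptimal argument and still get an upper bound. The natural choice is a point that interpolates between staying at $x$ (i.e.\ $T=0$, giving $H(x,0)=F(x)$) and moving toward the optimum $x^*$. Concretely, for a scalar $\alpha\in[0,1]$ I would test the argument $T=\alpha(x^*-x)$, which lands us at $x+T = (1-\alpha)x + \alpha x^*$. This yields
\begin{equation}\label{eq:plan1}
H(x,T(x)) \leq H(x,\alpha(x^*-x)) = f(x) + \alpha\ve{\nabla f(x)}{x^*-x} + \tfrac{\alpha^2}{2}\|x^*-x\|_L^2 + \Psi((1-\alpha)x+\alpha x^*).
\end{equation}

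Next I would convexify the separate pieces. By convexity of $f$, the linear term obeys $\ve{\nabla f(x)}{x^*-x} \leq f(x^*)-f(x)$; by convexity of $\Psi$, the last term obeys $\Psi((1-\alpha)x+\alpha x^*) \leq (1-\alpha)\Psi(x) + \alpha \Psi(x^*)$. Substituting these into \eqref{eq:plan1}, using $R=\|x-x^*\|_L$ and collecting the $f$ and $\Psi$ contributions into $F$, I expect to arrive at something like
\begin{equation}\label{eq:plan2}
H(x,T(x)) \leq F(x) - \alpha(F(x)-F^*) + \tfrac{\alpha^2}{2}R^2.
\end{equation}
Subtracting $F^*$ from both sides gives $H(x,T(x))-F^* \leq (1-\alpha)(F(x)-F^*) + \tfrac{\alpha^2}{2}R^2$, a bound valid for every $\alpha\in[0,1]$.

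Finally I would optimize over $\alpha$. The right-hand side is a convex quadratic in $\alpha$ minimized at $\alpha^* = (F(x)-F^*)/R^2$. When $F(x)-F^*\leq R^2$ this value lies in $[0,1]$ and plugging it in produces exactly $\left(1-\tfrac{F(x)-F^*}{2R^2}\right)(F(x)-F^*)$, the first case of \eqref{eq:nonsmooth:gamma1}. When $F(x)-F^*> R^2$ the unconstrained minimizer exceeds $1$, so the quadratic is still decreasing at $\alpha=1$; I would therefore take $\alpha=1$, which makes $x+T=x^*$ and yields $H(x,T(x))-F^* \leq \tfrac{1}{2}R^2$, and the strict inequality $\tfrac12 R^2 < \tfrac12(F(x)-F^*)$ follows immediately from the case hypothesis. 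This handles the second case.

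The steps are all routine once the interpolation argument is set up; the only point requiring a little care is the case split and checking that the minimizing $\alpha^*$ genuinely lands in $[0,1]$ versus being clipped to the boundary, which is precisely what dictates the two branches of \eqref{eq:nonsmooth:gamma1}. A subtle prerequisite I would want to verify is that $x^*\in\dom\Psi$ (so that $\Psi(x^*)$ is finite and the convexity bound on $\Psi$ is meaningful); since $x^*\in X^*$ minimizes $F=f+\Psi$ with $F^*>-\infty$, this holds automatically. No obstacle of substance is anticipated beyond this bookkeeping.
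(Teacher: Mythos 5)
Your proposal is correct and follows essentially the same route as the paper: both bound $H(x,T(x))$ by testing the minimizer against the segment $y=\alpha x^*+(1-\alpha)x$, use convexity of $f$ (via the gradient inequality) and of $\Psi$ to reach $H(x,T(x))-F^*\leq(1-\alpha)(F(x)-F^*)+\tfrac{\alpha^2}{2}R^2$, and then minimize over $\alpha\in[0,1]$ with $\alpha^*=\min\{1,(F(x)-F^*)/R^2\}$. The only cosmetic difference is that the paper first passes to $\min_y F(y)+\tfrac12\|y-x\|_L^2$ and then applies convexity of $F$ on the segment, whereas you apply the two convexity bounds after restricting to the segment; the computations are identical.
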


\begin{proof}
\begin{eqnarray}
\notag H(x,T(x)) &\stackrel{\eqref{eq:T(x)def}}{=}& \min_{T\in \R^{\N}} H(x,T)\\
\notag & = & \min_{y\in\R^{\N}} H(x,y-x)\\
\notag & \stackrel{\eqref{eq:H(x,t)}}{\leq}&  \min_{y\in \R^{\N}} f(x)+ \ve{\nabla f(x)}{y-x} + \Psi(y)+\tfrac{1}{2} \|y-x\|_L^2\\
\notag & \leq& \min_{y\in \R^{\N}}   F(y) + \tfrac{1}{2} \|y-x\|_L^2\\
\notag &\leq & \min_{\alpha \in [0,1]}   F(\alpha x^* + (1-\alpha)x) + \tfrac{\alpha^2}{2} \|x-x^*\|_L^2\\
\label{eq:R2} &\leq & \min_{\alpha \in [0,1]} F(x)-\alpha (F(x)-F^*)+ \tfrac{\alpha^2}{2} R^2.
\end{eqnarray}
Minimizing \eqref{eq:R2} in $\alpha$ gives $\alpha^* = \min\left\{1,(F(x)-F^*)/R^2\right\}$; the result follows.
\end{proof}

We are now ready to estimate the number of iterations needed to push the objective value within $\epsilon$ of the optimal value with high probability. Note that since $\rho$ appears under the logarithm and hence it is easy to attain high confidence.

\begin{theorem}\label{thm:composite_general_f}
Choose initial point $x_0$ and target confidence $0<\rho<1$. Further, let the target accuracy $\epsilon>0$ and iteration counter $k$ be chosen in any of the following two ways:
\begin{enumerate}
\item[(i)] $\epsilon<F(x_0)-F^*$  and
\begin{equation}\label{eq:k_composite_nonstrong} k \geq \tfrac{2n \max\{\Rws{L}{x_0}, F(x_0)-F^*\}}{\epsilon} \left(1 + \log \tfrac{1}{\rho}\right) + 2 - \tfrac{2n\max\{\Rws{L}{x_0}, F(x_0)-F^*\}}{F(x_0)-F^*},\end{equation}
\item[(ii)] $\epsilon < \min\{\Rws{L}{x_0}, F(x_0)-F^*\}$ and
\begin{equation}\label{eq:k_composite_nonstrong2} k \geq \tfrac{2n \Rws{L}{x_0}}{\epsilon} \log \tfrac{F(x_0)-F^*}{\epsilon\rho }.\end{equation}
\end{enumerate}
If $x_k$ is the random point generated by UCDC$(x_0)$ as applied to the convex function $F$, then
\[\Prob(F(x_k)-F^*\leq \epsilon) \geq 1-\rho.\]
\end{theorem}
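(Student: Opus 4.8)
The plan is to reduce the statement to the abstract one-dimensional recursion in Theorem~\ref{l:randomVariableTrick}, applied to the random variables $\xi_k \eqdef F(x_k)-F^*$. First I would record two structural facts. The sequence $\{\xi_k\}$ is nonnegative and nonincreasing: for any realized block $i$, minimizing $V_i(x_k,\cdot)$ can only improve on the choice $t=0$, so by \eqref{eq:upper_bound} we get $F(x_{k+1})\le f(x_k)+V_i(x_k,0)+C_i(x_k)=F(x_k)$ for every realization. In particular every iterate lies in the level set $\{y:F(y)\le F(x_0)\}$, so $\|x_k-x^*\|_L\le R\eqdef \Rw{L}{x_0}$ for every fixed $x^*\in X^*$. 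This is what allows a single, iterate-independent constant $R$ to appear in the final bound.

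Next I would convert the two lemmas into a per-step recursion. Writing $R_k=\|x_k-x^*\|_L\le R$ and feeding Lemma~\ref{lem:H-Fstar} into Lemma~\ref{lem:op1}, I would first massage Lemma~\ref{lem:H-Fstar} into a clean dichotomy stated in terms of the uniform $R$ rather than $R_k$: whenever $\xi_k\le R^2$ one still has $H(x_k,T(x_k))-F^*\le (1-\tfrac{\xi_k}{2R^2})\xi_k$ (replacing $R_k$ by the larger $R$ only weakens the estimate, and in the crossover sub-case the bound $\tfrac12\xi_k$ is absorbed because $\tfrac12\le 1-\tfrac{\xi_k}{2R^2}$ exactly when $\xi_k\le R^2$), while for $\xi_k> R^2$ one has $H(x_k,T(x_k))-F^*<\tfrac12\xi_k$. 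Substituting into \eqref{eq:op1} then yields $\E[\xi_{k+1}\mid x_k]\le \xi_k-\tfrac{\xi_k^2}{2nR^2}$ on $\{\xi_k\le R^2\}$ and $\E[\xi_{k+1}\mid x_k]\le(1-\tfrac{1}{2n})\xi_k$ on $\{\xi_k> R^2\}$.

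For part (i) I would verify property (i) of Theorem~\ref{l:randomVariableTrick} with $c=2n\max\{\Rws{L}{x_0},F(x_0)-F^*\}$. On $\{\xi_k\le R^2\}$ this is immediate since $2nR^2\le c$. The genuine obstacle is the regime $\xi_k>R^2$, where Lemma~\ref{lem:H-Fstar} only supplies a fixed linear contraction by $1-\tfrac{1}{2n}$ rather than a quadratic decrease; here I would use monotonicity, $\xi_k\le\xi_0\le \tfrac{c}{2n}$, to show $\tfrac{\xi_k^2}{c}\le\tfrac{\xi_k}{2n}$, so that the linear bound dominates the required quadratic one. This is exactly the reason the constant carries a $\max$ with $F(x_0)-F^*$. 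With property (i) established, the hypothesis $\epsilon<F(x_0)-F^*\le c$ lets me invoke \eqref{eq:MainTrick:k}, and substituting $c$ and $\xi_0=F(x_0)-F^*$ reproduces \eqref{eq:k_composite_nonstrong}.

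For part (ii) I would instead verify property (ii) with $c=\tfrac{2n\Rws{L}{x_0}}{\epsilon}$ on $\{\xi_k\ge\epsilon\}$. On $\{\xi_k\le R^2\}$ the quadratic decrease gives $\E[\xi_{k+1}\mid x_k]\le(1-\tfrac{\xi_k}{2nR^2})\xi_k\le(1-\tfrac{\epsilon}{2nR^2})\xi_k$ using $\xi_k\ge\epsilon$; on $\{\xi_k>R^2\}$ the factor $1-\tfrac{1}{2n}$ is at most $1-\tfrac{\epsilon}{2nR^2}$ precisely because the hypothesis guarantees $\epsilon\le R^2$, which is where the $\min$ in the accuracy condition is used. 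Then $c>1$ since $\epsilon<R^2\le 2nR^2$, and \eqref{eq:MainTrick:k2} gives \eqref{eq:k_composite_nonstrong2}. Throughout, the only subtlety is the bookkeeping around the threshold $\xi_k=R^2$; once the dichotomy is recast in terms of the uniform $R$ and combined with monotonicity, both parts follow mechanically from Theorem~\ref{l:randomVariableTrick}.
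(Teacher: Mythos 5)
Your proposal is correct and follows essentially the same route as the paper's proof: combine Lemma~\ref{lem:op1} with Lemma~\ref{lem:H-Fstar}, pass to the uniform radius $\Rw{L}{x_0}$ via monotonicity of the level sets, and invoke the two parts of Theorem~\ref{l:randomVariableTrick} with $c=2n\max\{\Rws{L}{x_0},F(x_0)-F^*\}$ and $c=2n\Rws{L}{x_0}/\epsilon$ respectively. The only cosmetic difference is that you split the paper's single $\max\{\cdot,\cdot\}$ contraction factor into an explicit case analysis on the threshold $\xi_k=R^2$, which is equivalent.
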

\begin{proof}Since $F(x_k)\leq F(x_0)$ for all $k$, we have $\|x_k-x^*\|_L\leq \Rw{L}{x_0}$ for all $x^*\in X^*$. Lemma~\ref{lem:op1} together with Lemma~\ref{lem:H-Fstar} then imply that the following holds for all $k$:
\begin{eqnarray}\E[F(x_{k+1}) - F^* \;|\; x_k]
&\leq& \tfrac{1}{n}\max\left\{1-\tfrac{F(x_k)-F^*}{2\|x_k-x^*\|_L^2},\tfrac{1}{2} \right\}(F(x_k)-F^*) + \tfrac{n-1}{n}(F(x_k)-F^*)\notag\\
&=&    \max\left\{1-\tfrac{F(x_k)-F^*}{2n\|x_k-x^*\|_L^2},1-\tfrac{1}{2n} \right\}(F(x_k)-F^*)\notag\\
&\leq& \max\left\{1-\tfrac{F(x_k)-F^*}{2n\Rws{L}{x_0}},1-\tfrac{1}{2n} \right\}(F(x_k)-F^*).\label{eq:tt}
\end{eqnarray}
Let $\xi_k = F(x_k)-F^*$ and consider case (i). If we let $c=2n\max\{\Rws{L}{x_0},F(x_0)-F^*\}$, then from \eqref{eq:tt} we obtain
\[\E[\xi_{k+1} \;|\; \xi_k] \leq (1-\tfrac{\xi_k}{c})\xi_k = \xi_k - \tfrac{\xi_k^2}{c}, \qquad k\geq 0.\]
Moreover,  $\epsilon < \xi_0 < c$. The result then follows by applying Theorem~\ref{l:randomVariableTrick}.
Consider now case (ii). Letting $c= \tfrac{2n\Rws{L}{x_0}}{\epsilon}>1$, notice that if $\xi_k\geq \epsilon$, inequality \eqref{eq:tt} implies that
\[\E[\xi_{k+1} \;|\; \xi_k] \leq \max \left\{1-\tfrac{\epsilon}{2n\Rws{L}{x_0}},1-\tfrac{1}{2n}\right\}\xi_k = (1-\tfrac{1}{c})\xi_k.\]
Again, the result follows from Theorem~\ref{l:randomVariableTrick}.
\end{proof}

\subsection{Strongly Convex Objective}  \label{sec:composite_strong}

Assume that $F$ is strongly convex  with respect to some norm $\|\cdot\|$ with convexity parameter $\mu>0$; that is,
\begin{equation}\label{eq:strong_def}F(x)\geq F(y) + \ve{F'(y)}{x-y} + \tfrac{\mu}{2}\|x-y\|^2, \qquad x,y\in \dom F,\end{equation}
where $F'(y)$ is any subgradient of $F$ at $y$. Note that from the first order optimality conditions for \eqref{eq:P} we obtain $\ve{F'(x^*)}{x-x^*}\geq 0$ for all $x\in \dom F$ which, combining  with \eqref{eq:strong_def} used with $y=x^*$, yields the standard inequality
\begin{equation}\label{eq:stronglyconvexproperty}
 F(x)-F^* \geq \tfrac{\mu}{2} \|x-x^*\|^2, \qquad x\in \dom F.
\end{equation}

The next lemma will be useful in proving linear convergence of the expected value of the objective function to the minimum.

\begin{lemma} \label{lem:Hleq_strongcase}If $F$ is strongly convex with respect to $\|\cdot\|_L$ with convexity parameter $\mu>0$, then
\begin{equation}\label{eq:nonsmooth:gammaxiUsage}
 H(x,T(x)) - F^* \leq
\gamma_\mu
 (F(x)-F^*), \qquad x\in \dom F,
\end{equation}
where
\begin{equation}\label{eq:nonsmooth:gammaxiDef}
 \gamma_\mu =
 \begin{cases}
  1-\tfrac{\mu}{4}, & \text{if } \mu \leq 2, \\
  \tfrac{1}{\mu}, & \text{otherwise. }
 \end{cases}
\end{equation}
\end{lemma}

\begin{proof}
\begin{eqnarray}
\notag H(x,T(x)) &\stackrel{\eqref{eq:T(x)def}}{=}& \min_{t\in \R^{\N}} H(x,t)\\
\notag & = & \min_{y\in\R^{\N}} H(x,y-x)\\
\notag & \leq & \min_{y\in \R^{\N}}   F(y) + \tfrac{1}{2} \|y-x\|_L^2\\
\notag &\leq & \min_{\alpha \in [0,1]}   F(\alpha x^* + (1-\alpha)x) + \tfrac{\alpha^2}{2} \|x-x^*\|_L^2\\
\notag &\leq & \min_{\alpha \in [0,1]} F(x)-\alpha (F(x)-F^*)+ \tfrac{\alpha^2}{2} \|x-x^*\|_L^2\\
\label{eq:asuv1398f0a0sdfa}
&\overset{\eqref{eq:stronglyconvexproperty}}{\leq} & \min_{\alpha \in [0,1]} F(x)+\alpha\left( \tfrac{\alpha}{\mu}-1\right) (F(x)-F^*).
\end{eqnarray}
The optimal $\alpha$ in \eqref{eq:asuv1398f0a0sdfa} is $\alpha^* = \min\left\{1,\tfrac{\mu}{2}\right\}$; the result follows.
\end{proof}

We now show that the expected value of $F(x_k)$ converges to $F^*$ linearly.

\begin{theorem} \label{thm:nonsmooth:12312343} Let $F$ be strongly convex with respect to the norm $\|\cdot\|_L$ with convexity parameter $\mu>0$. If $x_k$ is the random point generated UCDC$(x_0)$, then
  \begin{equation}\label{eq:nonsmmoth:expresultstrongconvex}
  \E [F(x_k)-F^*] \leq \left(1-\tfrac{1- \gamma_\mu}n\right)^k (F(x_0)-F^*),
 \end{equation}
where $\gamma_\mu$ is defined by (\ref{eq:nonsmooth:gammaxiDef}).
\end{theorem}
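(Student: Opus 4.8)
The plan is to combine the two lemmas already established in this subsection and then unroll the resulting one-step contraction by induction. Lemma~\ref{lem:op1} gives, conditional on $x_k$, a bound on the expected objective decrease in terms of $H(x_k,T(x_k))-F^*$ and $F(x_k)-F^*$, while Lemma~\ref{lem:Hleq_strongcase} controls the first of these quantities by $\gamma_\mu(F(x_k)-F^*)$ under strong convexity with respect to $\|\cdot\|_L$. So the first thing I would do is substitute the bound of Lemma~\ref{lem:Hleq_strongcase} into Lemma~\ref{lem:op1}. This requires only that $x_k\in\dom F$, which holds for all iterates generated by UCDC$(x_0)$ since each block update keeps the relevant $\Psi_i$ finite.

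Concretely, substituting yields
\[
\E[F(x_{k+1})-F^* \mid x_k] \;\leq\; \tfrac{1}{n}\gamma_\mu (F(x_k)-F^*) + \tfrac{n-1}{n}(F(x_k)-F^*) \;=\; \left(1-\tfrac{1-\gamma_\mu}{n}\right)(F(x_k)-F^*),
\]
where the last equality is just the arithmetic simplification $\tfrac{\gamma_\mu+n-1}{n}=1-\tfrac{1-\gamma_\mu}{n}$. This is the key per-iteration contraction; note that $\gamma_\mu<1$ guarantees the contraction factor lies strictly below $1$.

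The second step is to pass from the conditional bound to an unconditional one. Taking expectations on both sides and using the tower property $\E\big[\E[\,\cdot\mid x_k]\big]=\E[\,\cdot\,]$, I obtain
\[
\E[F(x_{k+1})-F^*] \;\leq\; \left(1-\tfrac{1-\gamma_\mu}{n}\right)\E[F(x_k)-F^*], \qquad k\geq 0.
\]
Finally, iterating this inequality from $k$ down to $0$ (a trivial induction) gives the claimed bound \eqref{eq:nonsmmoth:expresultstrongconvex}, with $\gamma_\mu$ as defined in \eqref{eq:nonsmooth:gammaxiDef}.

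I do not expect any genuine obstacle here: the analytic content is entirely front-loaded into Lemmas~\ref{lem:op1} and~\ref{lem:Hleq_strongcase}, and this theorem is their routine consequence. The only points that warrant a word of care are verifying that the iterates remain in $\dom F$ so that Lemma~\ref{lem:Hleq_strongcase} applies at every step, and the correct use of the tower property when converting the conditional recursion into an unconditional geometric decay. Everything else is elementary algebra.
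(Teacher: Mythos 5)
Your proposal is correct and follows exactly the route the paper intends: the paper's own proof is the one-line remark that the theorem ``follows from Lemma~\ref{lem:op1} and Lemma~\ref{lem:Hleq_strongcase}'', and your substitution, algebraic simplification to the contraction factor $1-\tfrac{1-\gamma_\mu}{n}$, tower-property step, and induction are precisely the details being left implicit. Nothing to add.
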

\begin{proof}Follows from Lemma~\ref{lem:op1} and Lemma~\ref{lem:Hleq_strongcase}.
\end{proof}

The following is an analogue of Theorem~\ref{thm:composite_general_f} in the case of a strongly convex objective. Note that both the accuracy and confidence parameters appear under the logarithm.

\begin{theorem}\label{thm:nonsmooth:stornglyconvex:highprobresult}
Let $F$ be strongly convex with respect to $\|\cdot\|_L$ with convexity parameter $\mu>0$ and choose accuracy level $\epsilon>0$, confidence level $0<\rho<1$, and
\begin{equation}\label{eq:k_uniform_strong}
 k\geq
  \tfrac{n}{ 1- \gamma_\mu } \log \left(\tfrac{F(x_0)-F^*}{\rho\epsilon}\right),
\end{equation}
where $\gamma_\mu$ is given by \eqref{eq:nonsmooth:gammaxiDef}.
If $x_k$ is the random point generated by UCDC$(x_0)$, then
\[\Prob(F(x_k)-F^*\leq \epsilon) \geq 1-\rho.\]
\end{theorem}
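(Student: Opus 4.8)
The plan is to combine the linear contraction in expectation already established in Theorem~\ref{thm:nonsmooth:12312343} with the thresholding device of Theorem~\ref{l:randomVariableTrick}(ii). First I would set $\xi_k \eqdef F(x_k)-F^*$. By Theorem~\ref{thm:nonsmooth:12312343} the iterates satisfy a per-iteration contraction, so I would verify that the sequence $\{\xi_k\}$ meets the hypotheses of Theorem~\ref{l:randomVariableTrick}(ii), namely that it is nonnegative, nonincreasing, and obeys $\E[\xi_{k+1}\mid \xi_k]\leq (1-\tfrac{1}{c})\xi_k$ for an appropriate constant $c>1$.

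The natural choice is
\begin{equation}\label{eq:choice_of_c_strong}
c \eqdef \tfrac{n}{1-\gamma_\mu},
\end{equation}
since Lemma~\ref{lem:op1} combined with Lemma~\ref{lem:Hleq_strongcase} gives, for every $k$,
\begin{equation}\label{eq:strong_one_step}
\E[\xi_{k+1}\mid \xi_k] \leq \left(1-\tfrac{1-\gamma_\mu}{n}\right)\xi_k = \left(1-\tfrac{1}{c}\right)\xi_k.
\end{equation}
I would then check that $c>1$: because $0<\mu$ and $\gamma_\mu<1$ (from \eqref{eq:nonsmooth:gammaxiDef}, $\gamma_\mu=1-\tfrac{\mu}{4}<1$ when $\mu\leq 2$, and $\gamma_\mu=\tfrac{1}{\mu}<1$ when $\mu>2$), we have $1-\gamma_\mu\in(0,1)$, so $c=\tfrac{n}{1-\gamma_\mu}>n\geq 1$. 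Note that \eqref{eq:strong_one_step} holds for \emph{all} $k$, which is stronger than the ``for all $k$ such that $\xi_k\geq\epsilon$'' required by hypothesis (ii), so the hypothesis is satisfied a fortiori.

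With $c$ chosen as in \eqref{eq:choice_of_c_strong}, the iteration bound \eqref{eq:k_uniform_strong} reads precisely $k\geq c\log\tfrac{\xi_0}{\epsilon\rho}$, which is exactly condition \eqref{eq:MainTrick:k2} of Theorem~\ref{l:randomVariableTrick} with $\xi_0=F(x_0)-F^*$. Applying that theorem yields $\Prob(\xi_K\leq\epsilon)\geq 1-\rho$, which is the claim. I do not anticipate a genuine obstacle here, since the heavy lifting was done in Theorem~\ref{l:randomVariableTrick} and Lemma~\ref{lem:Hleq_strongcase}; the only points requiring care are confirming $c>1$ so that hypothesis (ii) is legitimately applicable, and matching the logarithmic bound \eqref{eq:k_uniform_strong} to \eqref{eq:MainTrick:k2} exactly. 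The monotonicity of $\{\xi_k\}$ (needed as a standing hypothesis of Theorem~\ref{l:randomVariableTrick}) follows because each step of UCDC minimizes the upper bound $V_i$ and hence cannot increase $F$.
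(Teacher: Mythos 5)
Your argument is correct and, at bottom, the same as the paper's: both rest on the one-step contraction $\E[F(x_{k+1})-F^*\mid x_k]\leq\left(1-\tfrac{1-\gamma_\mu}{n}\right)(F(x_k)-F^*)$ coming from Lemmas~\ref{lem:op1} and~\ref{lem:Hleq_strongcase}, and both convert it into a high-probability statement via Markov's inequality. The only difference is packaging: the paper applies Markov directly to the expectation bound \eqref{eq:nonsmmoth:expresultstrongconvex} of Theorem~\ref{thm:nonsmooth:12312343} in two lines, whereas you route through the general thresholding result, Theorem~\ref{l:randomVariableTrick}(ii) --- whose proof in case (ii) is itself exactly this Markov computation, the thresholding being vacuous when, as here, the contraction holds for all $k$. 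One pedantic point: Theorem~\ref{l:randomVariableTrick} assumes $0<\epsilon<\xi_0$, which the statement of Theorem~\ref{thm:nonsmooth:stornglyconvex:highprobresult} does not impose; if $\epsilon\geq F(x_0)-F^*$ the claim is immediate from the monotonicity of $\{F(x_k)\}$ that you already established, so this gap costs you only a one-line remark.
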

\begin{proof}
Using Markov inequality and Theorem~\ref{thm:nonsmooth:12312343}, we obtain
$$
\Prob[F(x_k)-F^*\geq \epsilon]
\leq \tfrac{1}{\epsilon} \E[F(x_k)-F^*]
\overset{\eqref{eq:nonsmmoth:expresultstrongconvex}}{\leq} \tfrac{1}{\epsilon} \left(1-\tfrac{1- \gamma_\mu}{n}\right)^k(F(x_0)-F^*)\stackrel{\eqref{eq:k_uniform_strong}}{\leq} \rho.
$$
\end{proof}

\subsection{A Regularization Technique}\label{sec:regular}

In this part we will investigate an alternative approach to establishing an iteration complexity result in the case of an objective function that is not strongly convex. The strategy is very simple. We first regularize the objective function by adding a small quadratic term to it, thus making it strongly convex, and then argue that when Algorithm~\ref{algorithm:UCD} is applied to the regularized objective, we can recover an approximate solution of the original non-regularized problem.

The result obtained in this way is slightly different to the one covered by Theorem~\ref{thm:composite_general_f} in that $2n\Rws{L}{x_0}$ is replaced by $4n\|x_0-x^*\|_L^2$. In some situations, $\|x_0-x^*\|_L^2$ can be significantly smaller than $\Rws{L}{x_0}$. However, let us remark that the regularizing term depends on quantities that are \emph{not known} in advance.

%
Fix $x_0$ and $\epsilon>0$ and consider a regularized version of the objective function defined by
  \begin{equation} \label{eq:F_mu}F_\mu(x) \eqdef F(x) + \tfrac{\mu}{2} \|x-x_0\|_L^2,
  \qquad   \mu = \tfrac{\epsilon}{\|x_0-x^*\|^2_L}.\end{equation}
Clearly, $F_\mu$ is strongly convex with respect to the norm $\|\cdot\|_L$ with convexity parameter $\mu$. In the rest of this subsection we show that if we apply UCDC$(x_0)$ to $F_\mu$ with target accuracy $\tfrac{\epsilon}{2}$, then with high probability we recover an $\epsilon$-approximate solution of \eqref{eq:P}. We first need to establish that an approximate minimizer of $F_\mu$ must be an approximate minimizer of $F$.

\begin{lemma}\label{lemma:regularizationtrick}

 If $x'$ satisfies $F_\mu(x') \leq \min_{x\in\R^\N} F_\mu(x) +\tfrac{\epsilon}{2}$, then  $F(x') \leq F^* +\epsilon$.
 \end{lemma}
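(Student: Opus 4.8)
The plan is to exploit the relationship between the minimizers of $F_\mu$ and $F$, using the fact that $F^* = F(x^*)$ for any $x^*\in X^*$ and that the regularizing term $\tfrac{\mu}{2}\|x-x_0\|_L^2$ is nonnegative. First I would denote $F_\mu^* = \min_{x\in\R^\N} F_\mu(x)$ and observe that since adding the nonnegative quadratic only increases the function, we have $F(x')\leq F_\mu(x')$ for every $x'$. Combining this with the hypothesis $F_\mu(x')\leq F_\mu^* + \tfrac{\epsilon}{2}$ gives $F(x')\leq F_\mu^* + \tfrac{\epsilon}{2}$, so it remains to bound $F_\mu^*$ above by $F^* + \tfrac{\epsilon}{2}$.

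The key step is to upper-bound $F_\mu^*$ by evaluating $F_\mu$ at a convenient point. The natural choice is the optimal solution $x^*$ of the original problem \eqref{eq:P}, because it makes the smooth part $F$ attain its minimum $F^*$ while leaving only the penalty to control. Since $F_\mu^* \leq F_\mu(x^*) = F(x^*) + \tfrac{\mu}{2}\|x^*-x_0\|_L^2 = F^* + \tfrac{\mu}{2}\|x_0-x^*\|_L^2$, I would now substitute the specific value $\mu = \epsilon/\|x_0-x^*\|_L^2$ from \eqref{eq:F_mu}. This choice of $\mu$ is precisely calibrated so that the penalty term telescopes: $\tfrac{\mu}{2}\|x_0-x^*\|_L^2 = \tfrac{\epsilon}{2}$, yielding $F_\mu^* \leq F^* + \tfrac{\epsilon}{2}$.

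Chaining the two bounds then completes the argument:
\[F(x') \;\leq\; F_\mu^* + \tfrac{\epsilon}{2} \;\leq\; F^* + \tfrac{\epsilon}{2} + \tfrac{\epsilon}{2} \;=\; F^* + \epsilon.\]

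I do not anticipate a genuine obstacle here, as the result is essentially a two-line sandwich estimate once the value of $\mu$ is plugged in; the only point requiring care is the direction of the inequality $F\leq F_\mu$ (valid because the penalty is nonnegative) versus the reverse bound $F_\mu^* \leq F_\mu(x^*)$ (valid because $F_\mu^*$ is a minimum). The slightly subtle conceptual point, worth noting but not part of the proof itself, is that $\mu$ depends on the unknown quantity $\|x_0-x^*\|_L$, which is exactly the drawback the authors flag in the surrounding discussion; for the purpose of this lemma, however, $\mu$ is simply treated as the fixed constant defined in \eqref{eq:F_mu}.
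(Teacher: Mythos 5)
Your argument is correct and is essentially identical to the paper's own proof: both use the pointwise bound $F \leq F_\mu$, the hypothesis $F_\mu(x') \leq F_\mu(x_\mu^*) + \tfrac{\epsilon}{2}$, and the evaluation $F_\mu(x_\mu^*) \leq F_\mu(x^*) = F^* + \tfrac{\mu}{2}\|x^*-x_0\|_L^2 = F^* + \tfrac{\epsilon}{2}$, then chain the three inequalities. No differences worth noting.
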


\begin{proof}
Clearly,
\begin{equation}
 \label{eq:asdfasfa}
F(x) \leq F_\mu(x), \qquad x \in \R^\N.
\end{equation}
If we let $x_\mu^{*} \eqdef \arg\min_{x\in\R^\N}F_\mu(x)$, then by assumption,
\begin{equation}\label{eq:nonsmooth:rcdm:makestronglyconvex:2}
  F_\mu(x') -F_\mu(x_\mu^*) \leq \tfrac{\epsilon}{2},
\end{equation}
and
\begin{equation}\label{eq:nonsmooth:rcdm:makestronglyconvex:1}
F_\mu(x_\mu^*)
 = \min_{x\in\R^\N}
 F(x)+\tfrac{\mu}{2} \|x-x_0\|_L^2
 \leq
 F(x^*)+\tfrac{\mu}{2} \|x^*-x_0\|_L^2
 \stackrel{\eqref{eq:F_mu}}{\leq}
 F(x^*) +\tfrac{\epsilon}{2}.
\end{equation}
Putting all these observations together, we get
\[
0
\leq
F(x') -F(x^*)
\overset{\eqref{eq:asdfasfa}}{\leq}
F_\mu(x')-F(x^*)
\overset{\eqref{eq:nonsmooth:rcdm:makestronglyconvex:2}}{\leq}
F_\mu(x_\mu^*)+\tfrac{\epsilon}{2}-F(x^*)
\overset{\eqref{eq:nonsmooth:rcdm:makestronglyconvex:1}}{\leq}
 \epsilon.
\]
\end{proof}

The following theorem is an analogue of Theorem~\ref{thm:composite_general_f}.

\begin{theorem} \label{thm:composite:main}Choose initial point $x_0$, target accuracy \begin{equation}\label{eq:main:eps}0<\epsilon \leq 2\|x_0-x^*\|_L^2,\end{equation} target confidence level $0<\rho<1$, and
\begin{equation}\label{eq:k_uniform} k \geq \tfrac{4n\|x_0-x^*\|_L^2}{\epsilon} \log \left(\tfrac{2(F(x_0)-F^*)}{\rho\epsilon}\right).\end{equation}
If $x_k$ is the random point generated by UCDC$(x_0)$ as applied to $F_\mu$, then
\[\Prob(F(x_k)-F^*\leq \epsilon) \geq 1-\rho.\]
\end{theorem}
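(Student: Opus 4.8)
The plan is to reduce this statement to the strongly convex high-probability bound, Theorem~\ref{thm:nonsmooth:stornglyconvex:highprobresult}, applied to the regularized objective $F_\mu$ of \eqref{eq:F_mu}, and then to transfer the guarantee back to $F$ via Lemma~\ref{lemma:regularizationtrick}. The added term $\tfrac{\mu}{2}\|x-x_0\|_L^2$ is block separable, so it can be folded into $\Psi$; this leaves the Lipschitz data $L$ and the per-block updates of UCDC intact, while (as already noted after \eqref{eq:F_mu}) making $F_\mu$ strongly convex with respect to $\|\cdot\|_L$ with parameter $\mu$. Hence Theorem~\ref{thm:nonsmooth:stornglyconvex:highprobresult}, run at target accuracy $\tfrac{\epsilon}{2}$ and confidence $\rho$, applies verbatim to the iterates of UCDC$(x_0)$ on $F_\mu$, and the proof reduces to checking that the iteration count \eqref{eq:k_uniform} is at least the count that theorem demands for $F_\mu$.

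First I would record the value of $\gamma_\mu$. The hypothesis \eqref{eq:main:eps}, $\epsilon \leq 2\|x_0-x^*\|_L^2$, together with the choice $\mu = \epsilon/\|x_0-x^*\|_L^2$ in \eqref{eq:F_mu}, gives $\mu \leq 2$, so by \eqref{eq:nonsmooth:gammaxiDef} we land in the branch $\gamma_\mu = 1-\tfrac{\mu}{4}$ and therefore $1-\gamma_\mu = \tfrac{\mu}{4}$. Consequently the leading constant in Theorem~\ref{thm:nonsmooth:stornglyconvex:highprobresult}, applied to $F_\mu$, is
\[\frac{n}{1-\gamma_\mu} = \frac{4n}{\mu} = \frac{4n\|x_0-x^*\|_L^2}{\epsilon},\]
which is exactly the prefactor in \eqref{eq:k_uniform}. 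It remains to compare the logarithmic terms, where I would use the two elementary facts $F_\mu(x_0) = F(x_0)$ (the regularizer vanishes at $x_0$) and $F_\mu^* \geq F^*$ (since $F \leq F_\mu$ pointwise, so $\min F \leq \min F_\mu$). These yield $F_\mu(x_0)-F_\mu^* \leq F(x_0)-F^*$, and hence
\[\log\!\left(\frac{F_\mu(x_0)-F_\mu^*}{\rho(\epsilon/2)}\right) \leq \log\!\left(\frac{2(F(x_0)-F^*)}{\rho\epsilon}\right).\]
Combining the two displays shows that the $k$ of \eqref{eq:k_uniform} meets the requirement of Theorem~\ref{thm:nonsmooth:stornglyconvex:highprobresult} for $F_\mu$ at accuracy $\tfrac{\epsilon}{2}$, so that $\Prob(F_\mu(x_k)-F_\mu^* \leq \tfrac{\epsilon}{2}) \geq 1-\rho$.

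Finally I would invoke Lemma~\ref{lemma:regularizationtrick}: whenever $F_\mu(x_k) \leq F_\mu^* + \tfrac{\epsilon}{2}$ it follows that $F(x_k) \leq F^* + \epsilon$, so the event in the previous display is contained in $\{F(x_k)-F^* \leq \epsilon\}$ and the probability bound transfers, giving $\Prob(F(x_k)-F^* \leq \epsilon) \geq 1-\rho$. The argument is essentially bookkeeping rather than a genuine difficulty; the one point that needs care — and the reason \eqref{eq:main:eps} is imposed — is ensuring $\mu \leq 2$ so that the correct branch of $\gamma_\mu$ is used and the prefactor collapses to $4n\|x_0-x^*\|_L^2/\epsilon$. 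One should also keep in mind, as the authors remark, that $\mu$ depends on the unknown quantity $\|x_0-x^*\|_L$, so this is a theoretical rather than an implementable bound.
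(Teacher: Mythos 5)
Your proposal is correct and follows essentially the same route as the paper's own proof: apply Theorem~\ref{thm:nonsmooth:stornglyconvex:highprobresult} to $F_\mu$ at accuracy $\tfrac{\epsilon}{2}$, verify that \eqref{eq:main:eps} forces $\mu\leq 2$ so that $\tfrac{n}{1-\gamma_\mu}=\tfrac{4n\|x_0-x^*\|_L^2}{\epsilon}$, bound $F_\mu(x_0)-F_\mu^*\leq F(x_0)-F^*$, and finish with Lemma~\ref{lemma:regularizationtrick}. All steps check out.
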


\begin{proof}
Let us apply Theorem~\ref{thm:nonsmooth:stornglyconvex:highprobresult} to the problem of minimizing $F_\mu$, composed as $f+\Psi_\mu$, with $\Psi_\mu(x) = \Psi(x)+\tfrac{\mu}{2} \|x-x_0\|_L^2$. Note that
\begin{equation}\label{eq:mainproof1}F_\mu(x_0) - F_\mu(x_\mu^*) \stackrel{\eqref{eq:F_mu}}{=}  F(x_0) - F_\mu(x_\mu^*) \stackrel{\eqref{eq:asdfasfa}}{\leq} F(x_0) - F(x_\mu^*) \leq F(x_0) - F^*,\end{equation}
and
\begin{equation}\label{eq:mainproof2}\tfrac{n}{1-\gamma_\mu} \quad \stackrel{\eqref{eq:nonsmooth:gammaxiDef}, \eqref{eq:F_mu}, \eqref{eq:main:eps}}{=} \quad \tfrac{4n\|x_0-x^*\|_L^2}{\epsilon}.\end{equation}
Comparing \eqref{eq:k_uniform_strong} and \eqref{eq:k_uniform} in view of \eqref{eq:mainproof1} and \eqref{eq:mainproof2}, Theorem~\ref{thm:nonsmooth:stornglyconvex:highprobresult} implies that \[\Prob(F_\mu(x_k) - F_\mu(x_\mu^*) \leq \tfrac{\epsilon}{2}) \geq 1-\rho.\]
It now suffices to apply Lemma \ref{lemma:regularizationtrick}.
\end{proof} 
\section{Coordinate Descent for Smooth Functions} \label{sec:smooth}

In this section we give a much simplified and improved treatment of the smooth case ($\Psi\equiv 0$) as compared to the analysis in Sections 2 and 3 of \cite{Nesterov:2010RCDM}.

As alluded to in the above, we will develop the analysis in the smooth case for arbitrary, possibly non-Euclidean, norms $\|\cdot\|_{(i)}$, $i=1,2,\dots,n$. Let $\|\cdot\|$ be an arbitrary norm in $\R^l$. Then its dual is defined in the usual way:
\[\|s\|^* = \max_{\|t\| = 1} \; \ve{s}{t}.\]

The following (Lemma~\ref{lem:s_sharp}) is a simple result which is used in \cite{Nesterov:2010RCDM} without being fully articulated nor proved as it constitutes a straightforward extension of a fact that is trivial in the Euclidean setting to the case of general norms. Since we think it is perhaps not standard, we believe it deserves to be spelled out explicitly. The lemma has the following use. The main problem which needs to be solved at each iteration of Algorithm~\ref{algorithm:NRCDM} in the smooth case is of the form \eqref{eq:smooth:non-standard-lemma}, with $s=-\tfrac{1}{\Lip_i}\nabla_i f(x_k)$ and $\|\cdot\| = \|\cdot\|_{(i)}$. Since $\|\cdot\|$ is non-Euclidean, we cannot write down the solution of \eqref{eq:smooth:non-standard-lemma} in a closed form a-priori, for all norms. Nevertheless, we can say \emph{something} about the solution, which turns out to be enough for our subsequent analysis.

\begin{lemma}\label{lem:s_sharp}
If by $s^\#$ we denote an optimal solution of the problem
\begin{equation}\label{eq:smooth:non-standard-lemma}\min_t \; \left\{u(s) \eqdef -\ve{s}{t} + \tfrac{1}{2}\|t\|^2 \right\},\end{equation}
then
\begin{equation}
\label{eq:lem:s_sharp1} u(s^\#) =  -\tfrac{1}{2} \left(\|s\|^*\right)^2, \qquad \|s^\#\| = \|s\|^*, \qquad
(\alpha s)^\# = \alpha (s^\#), \; \alpha\in \R.
\end{equation}
\end{lemma}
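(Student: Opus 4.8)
The plan is to exploit the radial structure of the objective $u(t) = -\ve{s}{t} + \tfrac{1}{2}\|t\|^2$ and to reduce every claim to the defining property of the dual norm. First I would dispose of the trivial case $s = 0$, where $u$ is minimized at $t = 0$ and all three assertions are immediate. For $s\neq 0$ I would write any nonzero $t$ as $t = rv$ with $r = \|t\| > 0$ and $\|v\| = 1$, so that the objective becomes $u(rv) = -r\ve{s}{v} + \tfrac12 r^2$. Minimizing over $r\geq 0$ for a fixed direction $v$ gives the value $-\tfrac12\left(\max\{\ve{s}{v},0\}\right)^2$, and minimizing the result over unit vectors $v$ is the same as maximizing $\ve{s}{v}$ over the unit sphere, which by definition equals $\|s\|^*$. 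This yields the optimal value $u(s^\#) = -\tfrac12(\|s\|^*)^2$, the first claim.

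For the second claim, $\|s^\#\| = \|s\|^*$, I would argue directly rather than through an explicit minimizer, since in the non-Euclidean setting the minimizer need not be unique. For every $t$ the dual-norm inequality $\ve{s}{t}\leq \|s\|^*\,\|t\|$ gives $u(t) \geq -\|s\|^*\,\|t\| + \tfrac12\|t\|^2$, and the right-hand side, viewed as a function of $r = \|t\|\geq 0$, attains its minimum $-\tfrac12(\|s\|^*)^2$ exactly at $r = \|s\|^*$. Since any optimal $s^\#$ must achieve the value $-\tfrac12(\|s\|^*)^2$ computed above, both inequalities in this chain are forced to be equalities; equality in the radial bound pins down $\|s^\#\| = \|s\|^*$.

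The homogeneity relation $(\alpha s)^\# = \alpha s^\#$ follows from a scaling identity. Substituting $t = \alpha\tau$ into the objective associated with $\alpha s$ and using $\|\alpha\tau\| = |\alpha|\,\|\tau\|$ gives $-\ve{\alpha s}{\alpha\tau} + \tfrac12\|\alpha\tau\|^2 = \alpha^2\left(-\ve{s}{\tau} + \tfrac12\|\tau\|^2\right)$ for every $\alpha\in\R$. When $\alpha\neq 0$ the map $\tau\mapsto\alpha\tau$ is a bijection of $\R^l$ and $\alpha^2 > 0$, so minimizing the objective for $\alpha s$ is equivalent to minimizing the objective for $s$; consequently $\alpha s^\#$ is optimal for $\alpha s$, that is, $(\alpha s)^\# = \alpha s^\#$. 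The case $\alpha = 0$ is immediate since both sides equal $0$.

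The one genuinely delicate point is that the norm is possibly non-Euclidean, so I cannot differentiate $\|t\|^2$ or write a closed-form solution, and the minimizer may be non-unique. I would therefore lean entirely on three soft ingredients: coercivity of $u$ (the quadratic term dominates as $\|t\|\to\infty$), which guarantees that a minimizer exists; compactness of the unit sphere in the finite-dimensional space $\R^l$, which guarantees that the dual-norm maximum is attained; and the inequality $\ve{s}{t}\leq\|s\|^*\,\|t\|$, which is precisely the defining property of $\|\cdot\|^*$. These replace the gradient and projection arguments that would be available in the Euclidean case.
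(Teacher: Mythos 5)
Your proof is correct. For the optimal value you use exactly the paper's device --- split $t$ into a unit direction and a radial scalar, minimize over the scalar first, and recognize the remaining maximization as the definition of $\|s\|^*$ --- so that part is essentially identical (the paper lets the scalar range over all of $\R$, you restrict to $r\geq 0$ and handle the sign via $\max\{\ve{s}{v},0\}$; both work). Where you genuinely diverge is in the last two claims. The paper extracts an explicit parametrized minimizer $s^\# = \ve{s}{t^*}t^*$ with $t^*$ a maximizer of $\ve{s}{t}$ over the unit sphere, and reads off both $\|s^\#\|=\|s\|^*$ and the homogeneity $(\alpha s)^\# = \alpha(s^\#)$ from that formula, noting that $t^*$ depends on $s$ only. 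You instead derive $\|s^\#\|=\|s\|^*$ from the dual-norm inequality $\ve{s}{t}\leq\|s\|^*\|t\|$ by forcing equality in the radial lower bound, and you get homogeneity from the scaling identity $u_{\alpha s}(\alpha\tau)=\alpha^2 u_s(\tau)$, which shows the solution sets correspond under $\tau\mapsto\alpha\tau$. Your route is a bit longer but buys robustness: it applies to \emph{every} minimizer rather than to one constructed explicitly, and it makes transparent why non-uniqueness of $t^*$ (which the paper glosses over with ``$t^*$ depends on $s$ only'') is harmless --- the homogeneity statement is really about the solution sets, and any consistent selection satisfies it. Both proofs rely on the same soft ingredients (attainment of the dual-norm maximum on the compact unit sphere, coercivity of $u$), so neither is more general in substance.
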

\begin{proof} For $\alpha=0$ the last statement is trivial. If we fix $\alpha\neq 0$, then clearly
\begin{align*}
u((\alpha s)^\#) = \min_{\|t\|=1} \min_\beta \{-\ve{\alpha s}{\beta t} + \tfrac{1}{2}\|\beta t\|^2\}.
\end{align*}
For fixed $t$ the solution of the inner problem is $\beta = \ve{\alpha s}{t}$, whence
\begin{equation}\label{eq:smooth:lemma_sharp1}u((\alpha s)^\#) = \min_{\|t\|=1} -\tfrac{1}{2}\ve{\alpha s}{t}^2 = -\tfrac{1}{2} \alpha^2 \left(\max_{\|t\|=1} \ve{s}{t}\right)^2 = -\tfrac{1}{2}(\|\alpha s\|^*)^2,\end{equation}
 proving the first claim. Next, note that optimal $t=t^*$ in \eqref{eq:smooth:lemma_sharp1} maximizes $\ve{s}{t}$ over $\|t\|= 1$ and hence $\ve{s}{t^*} = \|s\|^*$, which implies that \[\|(\alpha s)^\#\| = |\beta^*| = |\ve{\alpha s}{t^*}| = |\alpha||\ve{s}{t^*}| = |\alpha|\|s\|^* = \|\alpha s\|^*,\]
giving the second claim. Finally, since $t^*$ depends on $s$ only, we have $(\alpha s)^\# = \beta^* t^* = \ve{\alpha s}{t^*}t^*$ and, in particular, $s^\# = \ve{s}{t^*}t^*$. Therefore, $(\alpha s)^\#  = \alpha (s^\#)$.
\end{proof}

We can use Lemma~\ref{lem:s_sharp} to rewrite the main step of Algorithm~\ref{algorithm:NRCDM} in the smooth case into the more explicit form,
\begin{eqnarray*}T^{(i)}(x) = \arg \min_{t\in \R_i} V_i(x,t) &\stackrel{\eqref{eq:V}}{=}& \arg \min_{t\in \R_i} \ve{\nabla_i f(x)}{t} + \tfrac{\Lip_i}{2}\|t\|_{(i)}^2 \\
&\stackrel{\eqref{eq:smooth:non-standard-lemma}}{=}& \left(-\tfrac{\nabla_i f(x)}{\Lip_i}\right)^\#  \stackrel{\eqref{eq:lem:s_sharp1}}{=} -\tfrac{1}{\Lip_i} (\nabla_i f(x))^\#,\end{eqnarray*}
leading to Algorithm~\ref{algorithm:RCDM-smooth}.

\begin{algorithm}[h!]
\caption{RCDS$(p,x_0)$ (\textbf{R}andomized \textbf{C}oordinate \textbf{D}escent for \textbf{S}mooth Functions)}
\begin{algorithmic} \label{algorithm:RCDM-smooth}
\FOR {$k=0,1,2,\dots$}
 \STATE Choose   $i_{k} = i \in \{1,2,\dots,n\}$ with probability $p_i$
 \STATE $x_{k+1} = x_k -\tfrac{1}{\Lip_{i}} \U_{i}  (\nabla_{i}f(x_k))^\#$
\ENDFOR
\end{algorithmic}
\end{algorithm}

The main utility of Lemma~\ref{lem:s_sharp} for the purpose of the subsequent complexity analysis comes from the fact that it enables us to give an \emph{explicit} bound on the decrease in the objective function during one iteration of the method in the same form as in the Euclidean case:

\begin{eqnarray}
f(x)-f(x+\U_i T^{(i)}(x)) &\stackrel{\eqref{eq:Lipschitz_ineq}}{\geq}& - [ \ve{\nabla_i f(x)}{T^{(i)}(x)} + \tfrac{\Lip_i}{2}\|T^{(i)}(x)\|_{(i)}^2] \notag \\
&=& - \Lip_i u((-\tfrac{\nabla_i f(x)}{\Lip_i})^\#) \label{eq:smooth:decrease}\\
&\stackrel{\eqref{eq:lem:s_sharp1}}{=} &\tfrac{\Lip_i}{2}(\|-\tfrac{\nabla_i f(x)}{\Lip_i}\|_{(i)}^*)^2 = \tfrac{1}{2\Lip_i}(\|\nabla_i f(x)\|_{(i)}^*)^2.\notag
\end{eqnarray}

\subsection{Convex Objective}
We are now ready to state the main result of this section.

\begin{theorem}\label{thm:smooth_main}
Choose initial point $x_0$, target accuracy $0<\epsilon<\min\{f(x_0)-f^*,2\Rws{LP^{-1}}{x_0}\}$, target confidence $0<\rho<1$ and
\begin{equation}\label{eq:k_smooth}
k \geq \tfrac{2\Rws{LP^{-1}}{x_0}}{\epsilon} \left(1 + \log \tfrac{1}{\rho}\right) + 2 - \tfrac{2\Rws{LP^{-1}}{x_0}}{f(x_0)-f^*},\end{equation}
or
\begin{equation}\label{eq:k_smooth2}
k \geq \tfrac{2\Rws{LP^{-1}}{x_0}}{\epsilon} \left(1 + \log \tfrac{1}{\rho}\right) - 2.\end{equation}
If $x_k$ is the random point generated by RCDS$(p,x_0)$ as applied to convex $f$, then
\[\Prob(f(x_k)-f^*\leq \epsilon) \geq 1-\rho.\]
\end{theorem}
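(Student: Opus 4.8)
The plan is to reduce the statement to Theorem~\ref{l:randomVariableTrick}(i) applied to the random variables $\xi_k = f(x_k)-f^*$, following the template of the proof of Theorem~\ref{thm:composite_general_f}. First I would record the expected one-step decrease. Averaging the explicit descent estimate \eqref{eq:smooth:decrease} over the random choice of block, and recalling that the dual norm $\|\cdot\|_{LP^{-1}}^*$ weights block $i$ by $w_i^{-1}=p_i/\Lip_i$ (see \eqref{eq:dual_norm}), gives
\[\E[f(x_{k+1})\;|\;x_k] = \sum_{i=1}^n p_i\, f(x_k+\U_i T^{(i)}(x_k)) \leq f(x_k) - \tfrac12 \sum_{i=1}^n \tfrac{p_i}{\Lip_i}(\nbd{\nabla_i f(x_k)}{i})^2 = f(x_k) - \tfrac12\left(\|\nabla f(x_k)\|_{LP^{-1}}^*\right)^2.\]
Since \eqref{eq:smooth:decrease} also shows $f(x_{k+1})\le f(x_k)$ for every realization, the iterates stay in the level set of $x_0$, so $\|x_k-x^*\|_{LP^{-1}}\le \Rw{LP^{-1}}{x_0}$ for all $x^*\in X^*$.

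Next I would convert this into a recurrence in $\xi_k$. Convexity of $f$ together with the generalized Cauchy--Schwarz (H\"older) inequality for the conjugate pair $\|\cdot\|_{LP^{-1}},\|\cdot\|_{LP^{-1}}^*$ gives
\[\xi_k = f(x_k)-f^* \leq \ve{\nabla f(x_k)}{x_k-x^*} \leq \|\nabla f(x_k)\|_{LP^{-1}}^*\,\|x_k-x^*\|_{LP^{-1}} \leq \|\nabla f(x_k)\|_{LP^{-1}}^*\,\Rw{LP^{-1}}{x_0},\]
hence $(\|\nabla f(x_k)\|_{LP^{-1}}^*)^2 \geq \xi_k^2/\Rws{LP^{-1}}{x_0}$. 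Combining this with the displayed decrease, and then conditioning on $\xi_k$ via the tower property (the bound depends on $x_k$ only through $\xi_k$), yields $\E[\xi_{k+1}\mid \xi_k]\le \xi_k - \xi_k^2/c$ with $c = 2\Rws{LP^{-1}}{x_0}$. The hypotheses $\epsilon<2\Rws{LP^{-1}}{x_0}=c$ and $\epsilon<f(x_0)-f^*=\xi_0$ are exactly what Theorem~\ref{l:randomVariableTrick}(i) requires, and its bound \eqref{eq:MainTrick:k} specializes verbatim to \eqref{eq:k_smooth}.

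Finally, for the alternative counter \eqref{eq:k_smooth2} I would argue that it already forces \eqref{eq:MainTrick:k}; comparing the two right-hand sides, it suffices to prove $c/\xi_0 = 2\Rws{LP^{-1}}{x_0}/(f(x_0)-f^*)\ge 4$, i.e.
\[\Rws{LP^{-1}}{x_0} \geq 2(f(x_0)-f^*).\]
This is the one genuinely delicate step, and it is where I expect the main difficulty: the naive route (convexity plus Cauchy--Schwarz, as in the previous paragraph) is too lossy and only delivers the factor-$1$ estimate $\Rws{LP^{-1}}{x_0}\ge \tfrac12(f(x_0)-f^*)$. The sharp bound instead comes from exhibiting a single far-away point of the level set. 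Fix $x^*\in X^*$ and any block $j$; since $\Psi\equiv0$, optimality gives $\nabla_j f(x^*)=0$, so the block descent inequality \eqref{eq:Lipschitz_ineq} reduces to $f(x^*+\U_j t)\le f^* + \tfrac{\Lip_j}{2}\nbp{t}{j}^2$. Choosing $t\in\R_j$ with $\nbp{t}{j}^2 = 2(f(x_0)-f^*)/\Lip_j$ makes $y:=x^*+\U_j t$ satisfy $f(y)\le f(x_0)$, so $y$ lies in the level set defining $\Rw{LP^{-1}}{x_0}$, while \eqref{eq:U_iU_j} and \eqref{eq:normW} give $\|y-x^*\|_{LP^{-1}}^2 = \tfrac{\Lip_j}{p_j}\nbp{t}{j}^2 = 2(f(x_0)-f^*)/p_j \ge 2(f(x_0)-f^*)$. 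Hence $\Rws{LP^{-1}}{x_0}\ge 2(f(x_0)-f^*)$, so $2-c/\xi_0\le -2$ and every $k$ obeying \eqref{eq:k_smooth2} also obeys \eqref{eq:k_smooth}; the conclusion then follows from the first part.
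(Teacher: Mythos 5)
Your proposal is correct, and its main body coincides with the paper's proof: the same expected one-step decrease $\tfrac12(\|\nabla f(x_k)\|_{LP^{-1}}^*)^2$, the same conversion via convexity and the conjugate-norm inequality to $\E[\xi_{k+1}\mid\xi_k]\le\xi_k-\xi_k^2/c$ with $c=2\Rws{LP^{-1}}{x_0}$, and the same appeal to Theorem~\ref{l:randomVariableTrick}(i) for \eqref{eq:k_smooth}. The only genuine divergence is in establishing \eqref{eq:uio}, i.e.\ $f(x_0)-f^*\le\tfrac12\Rws{LP^{-1}}{x_0}$, which is needed to pass from \eqref{eq:k_smooth} to \eqref{eq:k_smooth2}. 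The paper gets this ``from above,'' by asserting (without proof) an extension of Lemma~2 of \cite{Nesterov:2010RCDM} to weighted norms: $\nabla f$ is Lipschitz in $\|\cdot\|_V$ with constant $\tr(LV^{-1})$, which for $V=LP^{-1}$ equals $\tr(P)=1$ and yields $f(x_0)-f^*\le\tfrac12\|x_0-x^*\|_{LP^{-1}}^2$. You instead bound $\Rws{LP^{-1}}{x_0}$ ``from below'' by exhibiting the witness $y=x^*+\U_jt$ with $\nbp{t}{j}^2=2(f(x_0)-f^*)/\Lip_j$, which lies in the level set by \eqref{eq:Lipschitz_ineq} (using $\nabla_jf(x^*)=0$) and satisfies $\|y-x^*\|_{LP^{-1}}^2=2(f(x_0)-f^*)/p_j\ge 2(f(x_0)-f^*)$. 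Both arguments are valid and give the same inequality; yours has the advantage of being self-contained, using only facts already proved in the paper rather than an unproved lemma extension, at the cost of being specific to the quantity $\Rws{LP^{-1}}{x_0}$ rather than to $\|x_0-x^*\|_{LP^{-1}}^2$ (which is all that is needed here anyway, since $\|x_0-x^*\|_{LP^{-1}}\le\Rw{LP^{-1}}{x_0}$). Your parenthetical claim that the ``naive route'' yields a factor-$1$ estimate is not really substantiated, but it carries no weight in the argument.
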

\begin{proof}
Let us first estimate the expected decrease of the objective function during one iteration of the method:
\begin{eqnarray*}
f(x_k)-\E[f(x_{k+1}) \;|\; x_k]
&=&\sum_{i=1}^n p_i[f(x_k)-f(x_k+\U_i T^{(i)}(x_k))]\\
&\overset{\eqref{eq:smooth:decrease}}{\geq}& \tfrac{1}{2}\sum_{i=1}^n p_i \tfrac{1}{\Lip_i}   (\nbd{\nabla_i f(x_k)}{i})^2 = \tfrac{1}{2}(\|\nabla f(x_k)\|_W^*)^2,
\end{eqnarray*}
where $W=LP^{-1}$. Since $f(x_k)\leq f(x_0)$ for all $k$ and because $f$ is convex, we get
$f(x_k)-f^* \leq \max_{x^*\in X^*} \la \nabla f(x_k) , x_k - x^* \ra \leq \|\nabla f(x_k)\|_W^*  \Rw{W}{x_0}$, whence
\[
f(x_k)-\E[f(x_{k+1}) \;|\; x_k]
\geq
\tfrac{1}{2}
 \left(\tfrac{f(x_k)-f^*}{\Rw{W}{x_0}}\right)^2.
\]
By rearranging the terms we obtain
$$\E[f(x_{k+1}) - f^* \;|\; x_k]
\leq f(x_k) - f^* -  \tfrac{(f(x_k)-f^*)^2}{2\Rws{W}{x_0}}.
$$
If we now use Theorem~\ref{l:randomVariableTrick} with $\xi_k=f(x_k)-f^*$ and $c =  2\Rws{W}{x_0}$, we obtain the result for $k$ given by \eqref{eq:k_smooth}.
We now claim that $2 - \tfrac{c}{\xi_0} \leq -2$, from which it follows that the result holds for $k$ given by \eqref{eq:k_smooth2}. Indeed, first notice that this inequality is equivalent to
\begin{equation}\label{eq:uio}f(x_0)-f^* \leq \tfrac{1}{2}\Rws{W}{x_0}.\end{equation}
Now, a straightforward extension of Lemma~2 in \cite{Nesterov:2010RCDM} to general weights states that $\nabla f$ is Lipschitz with respect to the norm $\|\cdot\|_V$ with the constant $\tr(LV^{-1})$. This, in turn, implies the inequality \[f(x)-f^* \leq \tfrac{1}{2}\tr(LV^{-1})\|x-x^*\|_V^2,\]
from which \eqref{eq:uio} follows by setting $V=W$ and $x = x_0$.
\end{proof}

\subsection{Strongly Convex Objective}
Assume now that $f$ is strongly convex  with respect to the norm $\|\cdot\|_{LP^{-1}}$ (see definition \eqref{eq:strong_def}) with convexity parameter $\mu>0$. Using \eqref{eq:strong_def} with $x=x^*$ and $y=x_k$, we obtain
\[f^*- f(x_k) \geq \ve{\nabla f(x_k)}{h} + \tfrac{\mu}{2}\|h\|_{LP^{-1}} = \mu\left(\ve{\tfrac{1}{\mu}\nabla f(x_k)}{h} + \tfrac{1}{2}\|h\|_{LP^{-1}}\right),\]
where $h=x^*-x_k$. Applying Lemma~\ref{lem:s_sharp} to estimate the right hand side of the above inequality from below we obtain
\begin{equation}\label{eq:smooth_strong_ineq}f^*- f(x_k) \geq -\tfrac{1}{2\mu}(\|\nabla f(x_k)\|_{LP^{-1}}^*)^2.\end{equation}
Let us now write down an efficiency estimate for the case of a strongly convex objective.

\begin{theorem}\label{thm:smooth_main_strong}
Choose initial point $x_0$, target accuracy $0<\epsilon<f(x_0)-f^*$, target confidence $0<\rho<1$ and
\begin{equation}\label{eq:k_smooth_strong}
k \geq \tfrac{1}{\mu}\log \tfrac{f(x_0)-f^*}{\epsilon\rho}.\end{equation}
If $x_k$ is the random point generated by RCDS$(p,x_0)$ as applied to $f$, then
\[\Prob(f(x_k)-f^*\leq \epsilon) \geq 1-\rho.\]
\end{theorem}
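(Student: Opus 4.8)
The plan is to follow the same two-ingredient recipe used for Theorem~\ref{thm:composite_general_f} and Theorem~\ref{thm:nonsmooth:stornglyconvex:highprobresult}: first establish a linear contraction of the expected objective error per iteration, and then convert this into a high-probability bound via the Markov-inequality argument. The final statement is the strongly convex smooth analogue, so I expect its proof to be short, leaning on the per-iteration decrease already derived in \eqref{eq:smooth:decrease} together with the strong-convexity estimate \eqref{eq:smooth_strong_ineq} that immediately precedes the theorem.

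First I would compute the expected one-step decrease exactly as in the proof of Theorem~\ref{thm:smooth_main}: averaging \eqref{eq:smooth:decrease} over the random block choice with probabilities $p_i$ gives
\[
f(x_k)-\E[f(x_{k+1}) \;|\; x_k] \;\geq\; \tfrac{1}{2}\sum_{i=1}^n p_i \tfrac{1}{\Lip_i}(\nbd{\nabla_i f(x_k)}{i})^2 \;=\; \tfrac{1}{2}(\|\nabla f(x_k)\|_{LP^{-1}}^*)^2.
\]
Now instead of invoking convexity and $\Rw{LP^{-1}}{x_0}$, I would substitute the strong-convexity inequality \eqref{eq:smooth_strong_ineq}, which says $(\|\nabla f(x_k)\|_{LP^{-1}}^*)^2 \geq 2\mu\,(f(x_k)-f^*)$. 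Combining the two yields
\[
f(x_k)-\E[f(x_{k+1}) \;|\; x_k] \;\geq\; \mu\,(f(x_k)-f^*),
\]
which rearranges to $\E[f(x_{k+1})-f^* \;|\; x_k] \leq (1-\mu)(f(x_k)-f^*)$. This is a linear contraction in expectation with rate $1-\mu$, so by taking total expectations and iterating I get $\E[f(x_k)-f^*] \leq (1-\mu)^k (f(x_0)-f^*)$.

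For the final step I would apply Markov's inequality together with the bound $1-\mu \leq e^{-\mu}$, exactly as in the proof of Theorem~\ref{thm:nonsmooth:stornglyconvex:highprobresult}: with $k$ satisfying \eqref{eq:k_smooth_strong} we have
\[
\Prob(f(x_k)-f^* \geq \epsilon) \;\leq\; \tfrac{1}{\epsilon}\E[f(x_k)-f^*] \;\leq\; \tfrac{1}{\epsilon}(1-\mu)^k (f(x_0)-f^*) \;\leq\; \rho,
\]
where the last inequality follows by plugging in $k \geq \tfrac{1}{\mu}\log\tfrac{f(x_0)-f^*}{\epsilon\rho}$. Alternatively, one could route the contraction through case (ii) of Theorem~\ref{l:randomVariableTrick} with $c=1/\mu$, which directly produces the iteration count \eqref{eq:k_smooth_strong}; either path works cleanly.

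The only subtlety worth pausing on is the validity of the contraction rate: the factor $1-\mu$ is only meaningful as a contraction when $\mu \leq 1$, and one should note that $\mu \leq 1$ indeed holds here because strong convexity of $f$ with respect to $\|\cdot\|_{LP^{-1}}$ and the fact that $\nabla f$ is Lipschitz with constant $\tr(L(LP^{-1})^{-1})=\tr(P)=\sum_i p_i = 1$ in this norm (the same extension of Lemma~2 of \cite{Nesterov:2010RCDM} invoked in the previous proof) force $\mu \leq 1$. I expect this compatibility check to be the main, and essentially the only, obstacle; once it is in place the derivation is a routine concatenation of the per-iteration decrease, the strong-convexity lower bound on the gradient norm, and the Markov/exponential estimate.
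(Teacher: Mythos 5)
Your proposal is correct and follows essentially the same route as the paper: the per-iteration expected decrease from \eqref{eq:smooth:decrease}, combined with the strong-convexity bound \eqref{eq:smooth_strong_ineq}, gives the contraction $\E[f(x_{k+1})-f^*\,|\,x_k]\leq (1-\mu)(f(x_k)-f^*)$, which the paper then feeds into part (ii) of Theorem~\ref{l:randomVariableTrick} with $c=1/\mu$ (the alternative you mention), while your direct Markov-plus-$1-\mu\leq e^{-\mu}$ computation is the same estimate unrolled. Your side remark that $\mu\leq 1$ is forced by the Lipschitz constant $\tr(P)=1$ in the $\|\cdot\|_{LP^{-1}}$ norm is a worthwhile sanity check that the paper leaves implicit.
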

\begin{proof}
Let us first estimate the expected decrease of the objective function during one iteration of the method:
\begin{eqnarray*}
f(x_k)-\E[f(x_{k+1}) \;|\; x_k]
&=&\sum_{i=1}^n p_i[f(x_k)-f(x_k+\U_i T^{(i)}(x_k))]\\
&\stackrel{\eqref{eq:smooth:decrease}}{\geq}& \tfrac{1}{2}\sum_{i=1}^n p_i \tfrac{1}{\Lip_i}   (\nbd{\nabla_i f(x_k)}{i})^2\\
&=& \tfrac{1}{2}(\|\nabla f(x_k)\|_{LP^{-1}}^*)^2\\
&\stackrel{\eqref{eq:smooth_strong_ineq}}{\geq}& \mu(f(x_k)-f^*).
\end{eqnarray*}
After rearranging the terms we obtain
$\E[f(x_{k+1}) - f^* \;|\; x_k]  \leq (1-\mu)\E[f(x_k) - f^*]$. If we now use part (ii) of Theorem~\ref{l:randomVariableTrick}  with $\xi_k=f(x_k)-f^*$ and $c = \tfrac{1}{\mu}$, we obtain the result.
\end{proof}

\section{Comparison of CD Methods with Complexity Guarantees} \label{sec:comparison}

In this section we compare the results obtained in this paper with existing CD methods endowed with iteration complexity bounds.

\subsection{Smooth case ($\Psi = 0$)}

In Table~\ref{T:1} we look at the results for unconstrained smooth minimization of Nesterov~\cite{Nesterov:2010RCDM} and contrast these with our approach. For brevity we only include results for the non-strongly convex case.

\begin{table}[!ht]
\begin{center}
{\footnotesize
\begin{tabular}{|c||c|c|c|c|c|}
  \hline
  Algorithm  & $\Psi$ & $p_i$ & Norms & Complexity & Objective\\
  \hline
  \hline
    &&&&&\\
        \begin{tabular}{c}
     Nesterov \cite{Nesterov:2010RCDM}  \\
     (Theorem 4)  \\
    \end{tabular} & $0$ &  $\tfrac{\Lip_i}{\sum_i \Lip_i}$ & Euclidean &  $(2n+ \tfrac{8\tfrac{\Lip_i}{\sum_i \Lip_i}\Rws{I}{x_0}}{\epsilon})\log \tfrac{4(f(x_0)-f^*)}{\epsilon \rho}$ & $f(x)+\tfrac{\epsilon \|x-x_0\|_{I}^2}{8\Rws{I}{x_0}}$\\
    &&&&&\\
    \begin{tabular}{c}
     Nesterov \cite{Nesterov:2010RCDM}  \\
     (Theorem 3)  \\
    \end{tabular}
     & $0$    & $\tfrac{1}{n}$ & Euclidean &  $\tfrac{8n\Rws{\Lip}{x_0}}{\epsilon}\log \tfrac{4(f(x_0)-f^*)}{\epsilon\rho}$ & $f(x)+\tfrac{\epsilon \|x-x_0\|_\Lip^2}{8\Rws{\Lip}{x_0}}$\\
       &&&&&\\
  \begin{tabular}{c}
  Algorithm~\ref{algorithm:RCDM-smooth} \\
    (Theorem~\ref{thm:smooth_main}) \\
  \end{tabular}
   & $0$    & $>0$ & general &  $\tfrac{2\Rws{LP^{-1}}{x_0}}{\epsilon} (1 + \log \tfrac{1}{\rho}) -2$ & $f(x)$\\ 
  &&&&&\\
  \begin{tabular}{c}
   Algorithm~\ref{algorithm:UCD} \\
    (Theorem~\ref{thm:composite_general_f}) \\
   \end{tabular}
   & separable  & $\tfrac{1}{n}$ & Euclidean & \begin{tabular}{c}
                                                       $\tfrac{2n\max\{\Rws{\Lip}{x_0}, F(x_0)-F^*\}}{\epsilon}(1+\log \tfrac{1}{\rho})$ \\
                                                       $\tfrac{2n\Rws{\Lip}{x_0}}{\epsilon}\log \tfrac{F(x_0)-F^*}{\epsilon \rho}$ \\
                                                     \end{tabular}
    & $F(x)$\\
    &&&&&\\
  \hline
\end{tabular}
}
\end{center}

\caption{Comparison of our results to the results in \cite{Nesterov:2010RCDM} in the  non-strongly convex case. The complexity is for achieving $\Prob(F(x_k)-F^*\leq \epsilon)\geq 1-\rho$.}
\label{T:1}
\end{table}


We will now comment on the contents of Table~\ref{T:1} in detail.

\begin{itemize}
\item \textbf{Uniform probabilities.} 
Note that in the uniform case ($p_i=\tfrac{1}{n}$ for all $i$) we have
\[\Rws{LP^{-1}}{x_0} = n\Rws{L}{x_0},\]
and hence the leading term (ignoring the logarithmic factor) in the complexity estimate of Theorem~\ref{thm:smooth_main} (line 3 of Table~\ref{T:1}) coincides with the leading term in the complexity estimate of Theorem~\ref{thm:composite_general_f} (line 4 of Table~\ref{T:1}; the second result): in both cases it is \[\tfrac{2n\Rws{L}{x_0}}{\epsilon}.\]
Note that the leading term of the complexity estimate given in Theorem~3 of \cite{Nesterov:2010RCDM} (line 2 of Table~\ref{T:1}), which covers the uniform case, is worse by a factor of 4.

\item \textbf{Probabilities proportional to Lipschitz constants.} If we set $p_i = \Lip_i/S$ for all $i$, where $S=\sum_i \Lip_i$, then
\[\Rws{LP^{-1}}{x_0} = S\Rws{I}{x_0}.\]
In this case Theorem 4 in \cite{Nesterov:2010RCDM} (line 1 of Table~\ref{T:1}) gives the complexity bound $2[n+\tfrac{4S\Rws{I}{x_0}}{\epsilon}]$ (ignoring the logarithmic factor), whereas we obtain the bound $\tfrac{2S\Rws{I}{x_0}}{\epsilon}$ (line 3 of Table~\ref{T:1}), an improvement by a factor of 4. Note that there is a further additive decrease by the constant $2n$ (\emph{and} the additional constant $\tfrac{2\Rws{LP^{-1}}{x_0}}{f(x_0)-f^*}-2$ if we look at the sharper bound \eqref{eq:k_smooth}).
\item \textbf{General probabilities.} Note that unlike the results in \cite{Nesterov:2010RCDM}, which cover the choice of two probability vectors only (lines 1 and 2 of Table~\ref{T:1})---uniform and proportional to $\Lip_i$---our result (line 3 of Table~\ref{T:1}) covers the case of arbitrary probability vector $p$. This opens the possibility for fine-tuning the choice of $p$, in certain situations, so as to minimize $\Rws{LP^{-1}}{x_0}$.
\item \textbf{Logarithmic factor.}  Note that in our results we have managed to push $\epsilon$ out of the logarithm.
\item \textbf{Norms.} Our results hold for general norms.
\item \textbf{No need for regularization.} Our results hold for applying the algorithms to $F$ directly; i.e., there is no need to first regularize the function by adding a small quadratic term to it (in a similar fashion as we have done it in Section~\ref{sec:regular}). This is an essential feature as the regularization constants are not known and hence the complexity results obtained that way are not true complexity results.
\end{itemize}

\subsection{Nonsmooth case ($\Psi\neq 0$)}

In Table~\ref{T:2} we summarize the main characteristics of known complexity results for coordinate (or block coordinate) descent methods for minimizing composite functions.

Note that the methods of Saha \& Tewari and Schwarz \& Tewari cover the $\ell_1$ regularized case only, whereas the other methods cover the general block-separable case. However, while the greedy approach of Yun \& Tseng requires per-iteration work which grows with increasing problem dimension, our randomized strategy can be implemented cheaply. This gives an important advantage to randomized methods for problems of large enough size.

The methods of Yun \& Tseng  and Saha \& Tewari  use one Lipschitz constant only, the Lipschitz constant $L(\nabla f)$ of the gradient of $f$. Note that if $n$ is large, this constant is typically much larger than the (block) coordinate constants $\Lip_i$. Schwarz \& Tewari use coordinate Lipschitz constants, but assume that all of them are the same. This is suboptimal as in many applications the constants $\{\Lip_i\}$ will have a large variation and hence if one chooses $\beta = \max_i \Lip_i$ for the common Lipschitz constant, steplengths will necessarily be small (see Figure~\ref{fig:RC2} in Section~\ref{sec:experiments}).

\begin{table}[!h]
\begin{center}
{\footnotesize
\begin{tabular}{|c||c|c|c|c|c|}
\hline
   Algorithm   & \begin{tabular}{c}
                   Lipschitz \\
                   constant(s) \\
                 \end{tabular}
 & $\Psi$    & block   & \begin{tabular}{c}
                   Choice of \\
                   coordinate \\
                 \end{tabular}       & \begin{tabular}{c}
                   Work per \\
                   1 iteration \\
                 \end{tabular}\\
\hline
\hline
&&&&&\\
\begin{tabular}{c}
  Yun \& Tseng \\    \cite{Tseng:2009b:Nonsmooth} \\
\end{tabular}
     & $L(\nabla f)$  & separable                    & Yes   & greedy    &    expensive \\
&&&&&\\
\begin{tabular}{c}
  Saha \& Tewari \\ \cite{Saha10finite} \\
\end{tabular}       & $L(\nabla f)$  & $\|\cdot\|_1$ & No    & cyclic    &  cheap  \\
&&&&&\\
\begin{tabular}{c}
  Shwartz \& Tewari \\ \cite{ShalevTewari09} \\
\end{tabular}
       & $\beta = \max_i \Lip_i$  & $\|\cdot\|_1$ & No         & $\tfrac{1}{n}$    &   cheap  \\
&&&&&\\
\begin{tabular}{c}
  This paper \\ (Algorithm~\ref{algorithm:UCD}) \\
\end{tabular}        & $L_i$ & separable              & Yes        & $\tfrac{1}{n}$      & cheap\\
&&&&&\\
  \hline
\end{tabular}
}
\end{center}
\caption{Comparison of CD approaches for minimizing composite functions (for which iteration complexity results are provided).}
\label{T:2}
\end{table}

Let us now compare the impact of the Lipschitz constants on the complexity estimates.
For simplicity assume $\N=n$ and let $u=x^*-x_0$. The estimates are listed in Table~\ref{T:3}; it is clear from the last column that the the approach with individual constants $\Lip_i$ for each coordinate gives the best complexity.

\begin{table}[!h]
\begin{center}
{\footnotesize
\begin{tabular}{|c||c|c|}
\hline
   Algorithm   &  complexity  & complexity (expanded)  \\
\hline
\hline
&&\\
\begin{tabular}{c}
  Yun \& Tseng \\    \cite{Tseng:2009b:Nonsmooth} \\
\end{tabular}
     & $O(\tfrac{nL(\nabla f)\|x^*-x_0\|^2_2}{\epsilon})$     &  $O(\tfrac{n}{\epsilon}\sum_i L(\nabla f)(u^{(i)})^2)$ \\
&&\\
\begin{tabular}{c}
  Saha \& Tewari \\ \cite{Saha10finite} \\
\end{tabular}
     & $O(\tfrac{nL(\nabla f)\|x^*-x_0\|^2_2}{\epsilon})$      &  $O(\tfrac{n}{\epsilon} \sum_i L(\nabla f)(u^{(i)})^2)$  \\
&&\\
\begin{tabular}{c}
  Shwartz \& Tewari \\ \cite{ShalevTewari09} \\
\end{tabular}
& $O(\tfrac{n \beta \|x^*-x_0\|_2^2}{\epsilon})$    &  $O(\tfrac{n}{\epsilon} \sum_i (\max_i \Lip_i)  (u^{(i)})^2)$  \\
&&\\
\begin{tabular}{c}
  This paper \\ (Algorithm~\ref{algorithm:UCD}) \\
\end{tabular}
   & $O(\tfrac{n\|x^*-x_0\|^2_{L}}{\epsilon})$  &   $O(\tfrac{n}{\epsilon} \sum_i \Lip_i (u^{(i)})^2)$ \\
&&\\
  \hline
\end{tabular}
}
\end{center}
\caption{Comparison of iteration complexities of the methods listed in Table~\ref{T:2}. The complexity in the case of the randomized methods gives iteration counter $k$ for which $\E(F(x_k)\leq\epsilon)$}
\label{T:3}
\end{table}

\section{Numerical Experiments} \label{sec:experiments}

In this section we study the numerical behavior of RCDC on synthetic and real problem instances of two problem classes: Sparse Regression / Lasso (Section~\ref{sec:exp:lasso}) \cite{Tibshirani:1996} and Linear Support Vector Machines (Section~\ref{sec:exp:SVM}). Due to space limitations we will devote a separate report to the study of the (Sparse) Group Lasso problem.

As an important concern in Section~\ref{sec:exp:lasso} is to demonstrate that our methods scale well with size, all experiments were run on a PC with 480GB RAM. All algorithms were written in C.

\subsection{Sparse Regression / Lasso} \label{sec:exp:lasso}

Consider the problem
\begin{equation}\label{eq:sprarseregressionformulation}
  \min_{x\in\R^n} \tfrac{1}{2} \|Ax-b\|_2^2 +\lambda \|x\|_1,
\end{equation}
where $A=[a_1,\dots,a_n]\in\R^{m\times n}$, $b\in \R^m$, and $\lambda\geq 0$. The parameter $\lambda$ is used to induce sparsity in the resulting solution. Note that \eqref{eq:sprarseregressionformulation} is of the form \eqref{eq:P}, with $f(x)=\tfrac{1}{2}\|Ax-b\|_2^2$ and $\Psi(x)=\lambda \|x\|_1$. Moreover, if we let $N=n$ and $U_i = e_i$ for all $i$, then the Lipschitz constants $\Lip_i$ can be computed explicitly: \[\Lip_i = \|a_i\|_2^2.\]
Computation of $t = T^{(i)}(x)$ reduces to the ``soft-thresholding'' operator \cite{Lin:2010:COMS}. In some of the experiments in this section we will allow the probability vector $p$ to change throughout the iterations even though we do not give a theoretical justification for this. With this modification, a direct specialization of RCDC to \eqref{eq:sprarseregressionformulation} takes the form of Algorithm~\ref{algorithm:RCDM-l1Nonsmooth-example}. If uniform probabilities are used throughout, we refer to the method as UCDC.

\begin{algorithm}[h!]
\caption{RCDC for Sparse Regression}
\begin{algorithmic} \label{algorithm:RCDM-l1Nonsmooth-example}
\STATE Choose $x_0 \in \R^n$ and set $g_0=Ax_0-b=-b$
\FOR {$k=0,1,2,\dots$}
 \STATE Choose  $i_{k} = i \in \{1,2,\dots,n\}$ with probability $p_k^{(i)}$
 \STATE $\alpha = a_{i}^T g_k$
 \STATE $\displaystyle t =
 \left\{
\begin{array}{ll}
  -\frac{\alpha+\lambda}{\|a_i\|_2^2}, & \mbox{if}\ \vc{x_k}{{i}}-\frac{\alpha+\lambda}{\|a_i\|_2^2} > 0\\
  -\frac{\alpha-\lambda}{\|a_i\|_2^2}, & \mbox{if}\ \vc{x_k}{{i}}-\frac{\alpha-\lambda}{\|a_i\|_2^2} < 0\\
  -\vc{x_k}{{i}}, &\mbox{otherwise}
 \end{array}
\right.
$
 \STATE $x_{k+1} = x_k + t e_i, \quad g_{k+1} = g_k + t a_{i}$
\ENDFOR
\end{algorithmic}
\end{algorithm}

\nadpis{Instance generator} In order to be able to test Algorithm~\ref{algorithm:RCDM-l1Nonsmooth-example}  under controlled conditions we use a (variant of the) instance generator proposed in Section 6 of \cite{Nesterov:2007composite} (the generator was presented for $\lambda = 1$ but can be easily extended to any $\lambda>0$). In it, one chooses the sparsity level of $A$ and the optimal solution $x^*$; after that $A$, $b$, $x^*$ and $F^*=F(x^*)$ are generated. For details we refer the reader to the aforementioned paper.

In what follows we use the notation $\nnz{A}$ and $\nnz{x}$ to denote the number of nonzero elements of matrix $A$ and of vector $x$, respectively.


\nadpis{Speed versus sparsity} In the first experiment we investigate, on problems of size $m=10^7$ and $n=10^6$, the dependence of the time it takes for UCDC to complete a block of $n$ iterations (the measurements were done by running the method for $10\times n$ iterations and then dividing by 10) on the sparsity levels of $A$ and $x^*$. Looking at Table~\ref{tbl:numebricaltest:sparseregression:speed}, we see that the speed of UCDC depends roughly linearly on the sparsity level of $A$ (and does not depend on $\|x^*\|_0$ at all). Indeed, as $\|A\|_0$ increases from $10^7$ through $10^8$ to $10^9$, the time it takes for the method to complete $n$ iterations increases from about $0.9$s through $4$--$6$s to about $46$ seconds. This is to be expected since the amount of work per iteration of the method in which coordinate $i$ is chosen is proportional to $\|a_i\|_0$ (computation of $\alpha$, $\|a_i\|_2^2$ and $g_{k+1}$).

\begin{table}[htp]
 \centering
 {\footnotesize
 \begin{tabular}[b]{r||r|r|r}
 $\nnz{x^*}$  & $\nnz{A} = 10^7$ & $\nnz{A} = 10^8$ & $\nnz{A} = 10^9$   \\
  \hline \hline
 $16\times 10^2$ &0.89   &5.89    &46.23    \\
 $16\times 10^3$ &0.85   &5.83    &46.07     \\
 $16\times 10^4$ &0.86   &4.28    & 46.93    \\
\end{tabular}
}
 \caption{The time it takes for UCDC to complete a block of $n$ iterations increases linearly with $\|A\|_0$ and does not depend on $\|x^*\|_0$.}
\label{tbl:numebricaltest:sparseregression:speed}
\end{table}


\nadpis{Efficiency on huge-scale problems} Tables~\ref{T:num1} and \ref{T:num2} present typical results of the performance of UCDC, started from $x_0=0$, on synthetic sparse regression instances of big/huge size. The instance in the first table is of size $m=2\times 10^7$ and $n=10^6$, with $A$ having $5\times 10^7$ nonzeros and the support of $x^*$ being of size $160,000$.

\begin{table}[!htp]
 \centering
 {\footnotesize
\begin{tabular}[b]{ccc}
\multicolumn{3}{c}{$A\in\R^{2\cdot10^7\times 10^6}$, $\nnz{A} = 5\cdot10^7$}\\
\begin{tabular}[b]{r||c|r|r}
   ${k}/n$  & $\frac{F(x_k)-F^*}{F(x_0)-F^*}$ & $\nnz{x_k}$ & time [sec] \\
\hline \hline
0.0000  & $10^{0}$  & 0       &   0.0 \\
2.1180 & $10^{-1}$ & 880,056  &   5.6 \\
4.6350  & $10^{-2}$ & 990,166 &  12.3 \\
5.6250  & $10^{-3}$ & 996,121 &  15.1\\
7.9310  & $10^{-4}$ & 998,981 &  20.7\\
10.3920 & $10^{-5}$ & 997,394 &  27.4\\
12.1100 & $10^{-6}$ & 993,569 &  32.3\\
14.4640 & $10^{-7}$ & 977,260 &  38.3\\
18.0720 & $10^{-8}$ & 847,156 &  48.1\\
19.5190 & $10^{-9}$ & 701,449 &  51.7\\
21.4650 & $10^{-10}$ & 413,163 & 56.4 \\
23.9150 & $10^{-11}$ & 210,624 & 63.1 \\
25.1750 & $10^{-12}$ & 179,355 & 66.6 \\
27.3820 & $10^{-13}$ & 163,048 & 72.4 \\
29.9610 & $10^{-14}$ & 160,311 & 79.3 \\
\end{tabular}
&
\phantom{ABC}
&
\begin{tabular}[b]{r||c|r|r}
   ${k}/n$  & $\frac{F(x_k)-F^*}{F(x_0)-F^*}$ & $\nnz{x_k}$ & time [sec] \\
\hline \hline
30.9440 & $10^{-15}$ & 160,139 & 82.0 \\
32.7480 & $10^{-16}$ & 160,021 & 86.6 \\
34.1740 & $10^{-17}$ & 160,003 & 90.1 \\
35.2550 & $10^{-18}$ & 160,000 & 93.0 \\
36.5480 & $10^{-19}$ & 160,000 & 96.6 \\
38.5210 & $10^{-20}$ & 160,000 &101.4  \\
39.9860 & $10^{-21}$ & 160,000 &105.3  \\
40.9770 & $10^{-22}$ & 160,000 &108.1  \\
43.1390 & $10^{-23}$ & 160,000 &113.7  \\
47.2780 & $10^{-24}$ & 160,000 &124.8  \\
47.2790 & $10^{-25}$ & 160,000 &124.8  \\
47.9580 & $10^{-26}$ & 160,000 &126.4  \\
49.5840 & $10^{-27}$ & 160,000 &130.3  \\
52.3130 & $10^{-28}$ & 160,000 &136.8  \\
53.4310 & $10^{-29}$ & 160,000 &139.4  \\
\end{tabular}
\end{tabular}
}
\caption{Performance of UCDC on a sparse regression instance with a million variables.}
\label{T:num1}
\end{table}

In both tables the first column corresponds to the ``full-pass'' iteration counter $k/n$. That is, after $k=n$ coordinate iterations the value of this counter is 1, reflecting a single ``pass'' through the coordinates. The remaining columns correspond to, respectively, the size of the current residual $F(x_k)-F^*$ relative to the initial residual $F(x_0)-F^*$, size $\nnz{x_k}$ of the support of the current iterate $x_k$, and time (in seconds). A row is added whenever the residual initial residual is decreased by an additional factor of 10.

Let us first look at the smaller of the two problems (Table~\ref{T:num1}). After $35\times n$ coordinate iterations, UCDC decreases the initial residual by a factor of $10^{18}$, and this takes about a minute and a half. Note that the number of nonzeros of $x_k$ has stabilized  at this point at $160,000$, the support size of the optima solution. The method has managed to identify the support. After 139.4 seconds the residual is decreased by a factor of $10^{29}$. This surprising convergence speed can in part be explained by the fact that for random instances with $m>n$, $f$ will typically be strongly convex, in which case UCDC converges linearly (Theorem~\ref{thm:nonsmooth:stornglyconvex:highprobresult}).

UCDC has a very similar behavior on the larger problem as well (Table~\ref{T:num2}). Note that $A$ has 20 billion nonzeros. In $1\times n$ iterations the initial residual is decreased by a factor of $10$, and this takes less than an hour and a half. After less than a day,  the residual is decreased by a factor of 1000. Note that it is very unusual for convex optimization methods equipped with iteration complexity guarantees to be able to solve problems of these sizes.

\begin{table}
 \centering
 {\footnotesize
\begin{tabular}[b]{c}
$A\in\R^{10^{10}\times 10^9}$, $\nnz{A} = 2\times10^{10}$\\
 \begin{tabular}[b]{r||c|r|r}
   ${k}/n$  & $\frac{F(x_k)-F^*}{F(x_0)-F^*}$ & $\nnz{x_k}$ & time [hours] \\
\hline \hline
  0     & $10^{0}$ & 0        &   0.00 \\
  1 & $10^{-1}$ & 14,923,993&  1.43  \\ 
  3 & $10^{-2}$ & 22,688,665&  4.25  \\ 
 16 & $10^{-3}$ & 24,090,068&  22.65 \\ 
\end{tabular}
\end{tabular}
}
\caption{Performance of UCDC on a sparse regression instance with a billion variables and 20 billion nonzeros in matrix $A$.}
\label{T:num2}
\end{table}


\nadpis{Performance on fat matrices ($m<n$)} When $m<n$, then $f$ is not strongly convex and UCDC has the complexity $O(\tfrac{n}{\epsilon}\log \tfrac{1}{\rho})$ (Theorem~\ref{thm:composite_general_f}). In Table~\ref{T:num1N<M} we illustrate the behavior of the method on such an instance; we have chosen $m =10^4$, $n= 10^5$, $\nnz{A} = 10^7$ and $\nnz{x^*}=1,600$. Note that after the first $5,010\times n$ iterations UCDC decreases the residual by a factor of 10+ only; this takes less than $19$ minutes. However, the decrease from $10^2$ to $10^{-3}$ is done in $15\times n$ iterations and takes 3 seconds only, suggesting very fast local convergence.

\begin{table}[!htp]
 \centering
 {\footnotesize
 \begin{tabular}[b]{r||c|r|r}
   ${k}/n$  & $F(x_k)-F^*$ & $\nnz{x_k}$ & time [s] \\
\hline \hline
$1$   &$>10^7$ &63,106&0.21 \\
$5,010$&$<10^6$&33,182& 1,092.59\\
$18,286$&$<10^5$&17,073& 3,811.67 \\
$21,092$&$<10^4$&15,077& 4,341.52\\
$21,416$&$<10^3$&11,469& 4,402.77\\
$21,454$&$<10^2$&5,316& 4,410.09\\
$21,459$&$<10^1$&1,856& 4,411.04\\
$21,462$&$<10^0$&1,609& 4,411.63\\
$21,465$&$<10^{-1}$&1,600& 4,412.21\\
$21,468$&$<10^{-2}$&1,600& 4,412.79\\
$21,471$&$<10^{-3}$&1,600& 4,413.38\\
\end{tabular}
}
 \caption{UCDC needs many more iterations when $m<n$, but local convergence is still fast.}\label{T:num1N<M}
\end{table}


\nadpis{Comparing different probability vectors}

Nesterov \cite{Nesterov:2010RCDM} considers only probabilities proportional to a power of the Lipschitz constants: \begin{equation}\label{eq:exp:alpha}p_i  = \tfrac{\Lip_i^\alpha}{\sum_{i=1}^n \Lip_i^\alpha}, \qquad 0\leq \alpha\leq 1.\end{equation}
In Figure~\ref{fig:RC3} we compare the behavior of RCDC, with the probability vector chosen according to the power law \eqref{eq:exp:alpha}, for three different values of $\alpha$ (0, 0.5 and 1). All variants of RCDC were compared on a single instance with $m=1,000$, $n=2,000$ and $\nnz{x^*}=300$ (different instances produced by the generator yield similar results) and with $\lambda \in \{0,1\}$. The plot on the left corresponds to $\lambda=0$, the plot on the right to $\lambda=1$.

\begin{figure}[!htp]
 \centering
 \includegraphics[width=14cm, height = 5.5cm]{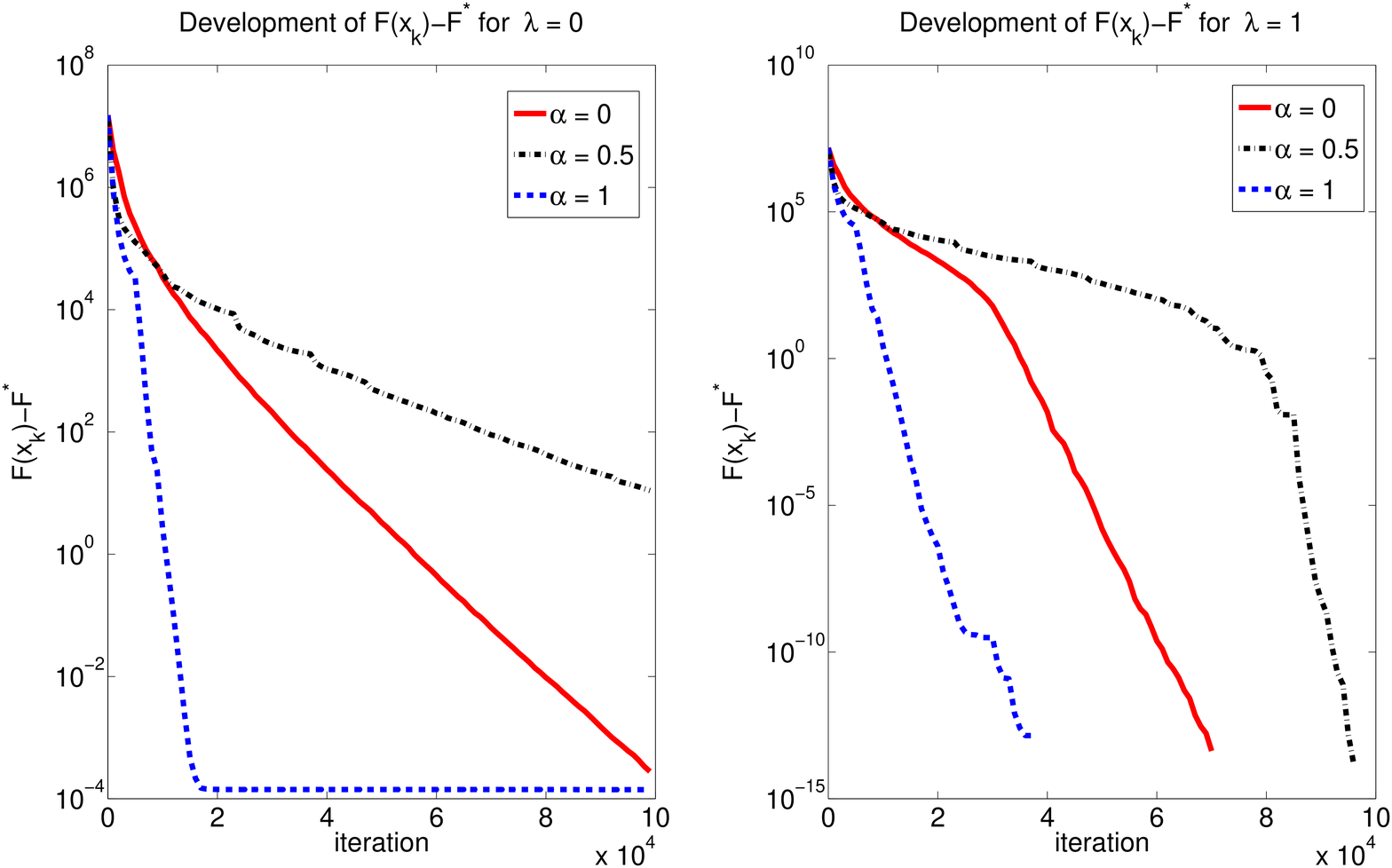}
 \caption{Development of $F(x_k)-F^*$ for sparse regression problem with $\lambda=0$ (left)
 and $\lambda=1$ (right).}
 \label{fig:RC3}
\end{figure}

Note that in both cases the choice $\alpha=1$ is the best. In other words, coordinates with large $\Lip_i$ have a tendency to decrease the objective   function the most. However, looking at the $\lambda=0$ case, we see that the method with $\alpha = 1$ stalls after about 20,000 iterations. The reason for this is that  now the coordinates with small $\Lip_i$ should be chosen to further decrease the objective  value. However, they are chosen with very small probability and hence the slowdown. A solution to this could be to start the method with $\alpha = 1$ and then switch to $\alpha=0$ later on. On the problem with $\lambda =1$ this effect is less pronounced. This is to be expected as now the objective function is a combination of $f$ and $\Psi$, with $\Psi$ exerting its influence and mitigating the effect of the Lipschitz constants.

\nadpis{Coordinate Descent vs. a Full-Gradient method}  In Figure~\ref{fig:RC3} we compare the performance of RCDC with the full gradient (FG) algorithm \cite{Nesterov:2007composite} (with the Lipschitz constant $L_{FG} = \lambda_{\text{max}} (A^TA) > \max_i{\Lip_i}$) for four different distributions of the Lipschitz constants $\Lip_i$. Since the work performed during one iteration of FG is comparable with the work performed by UCDC during $n$ coordinate iterations, for FG we multiply the iteration count by $n$. In all four tests we solve instances with $A \in \R^{2,000\times 1,000}$.

In the 1-1 plot the Lipschitz constants $\Lip_i$ were generated uniformly at random in the interval $(0,1)$. We see that the RCDC variants with $\alpha=0$ and $\alpha=0.2$ exhibit virtually the same behavior, whereas $\alpha=1$ and FG struggle finding a solution with error tolerance below $10^{-5}$ and $10^{-2}$, respectively. The $\alpha=1$ method does start off a bit faster, but then stalls due to the fact that the coordinates with small Lipschitz constants are chosen with extremely small probabilities. For a more accurate solution one needs to be updating these coordinates as well.

In order to zoom in on this phenomenon, in the 1-2 plot we construct an instance with an extreme distribution of Lipschitz constants:
98\% of the constants have the value $10^{-6}$, whereas the remaining 2\% have the value $10^3$. Note that while the FG and $\alpha=1$ methods are able to quickly decrease the objective function within $10^{-4}$ of the optimum, they get stuck afterwards since they effectively never update the coordinates with $\Lip_i = 10^{-6}$. On the other hand, the $\alpha = 0$ method starts off slowly, but does not stop and manages to solve the problem eventually, in about $2\times 10^5$ iterations.

\begin{figure}[!htp]
 \centering
 \begin{tabular}{c|c}
\includegraphics[width=7cm]{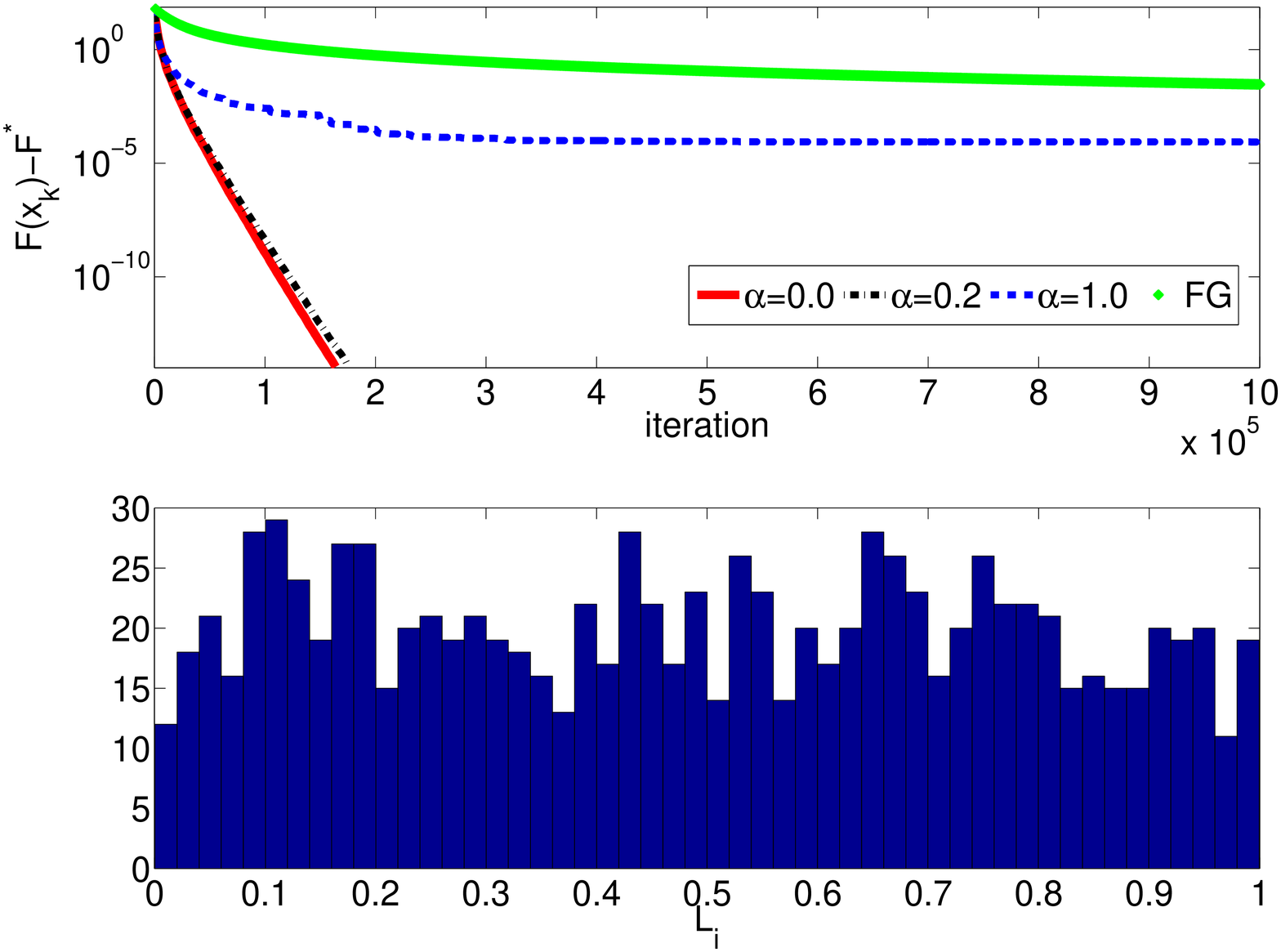}
     &
\includegraphics[width=7cm]{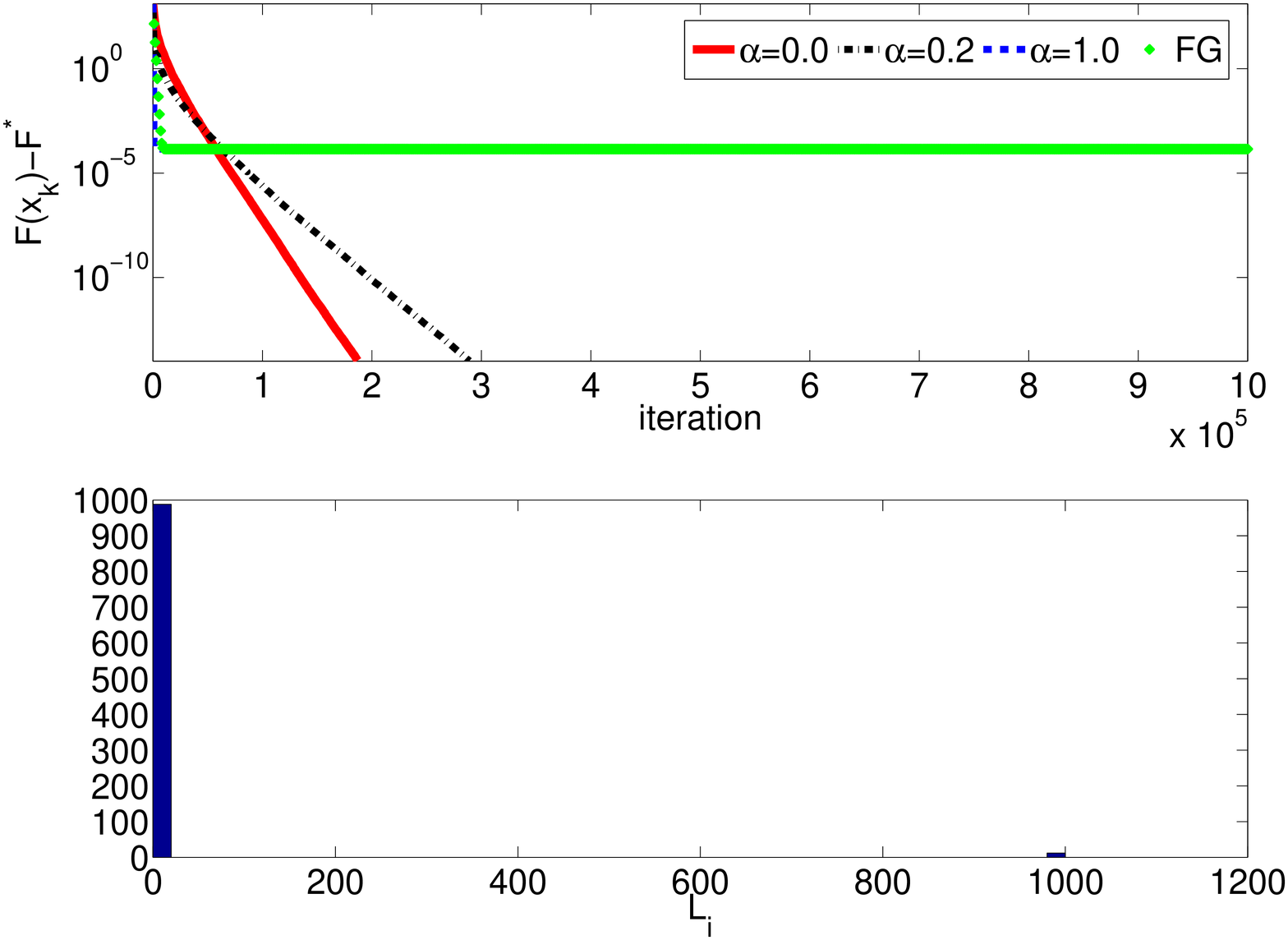}
      \\
\hline
\\
\includegraphics[width=7cm]{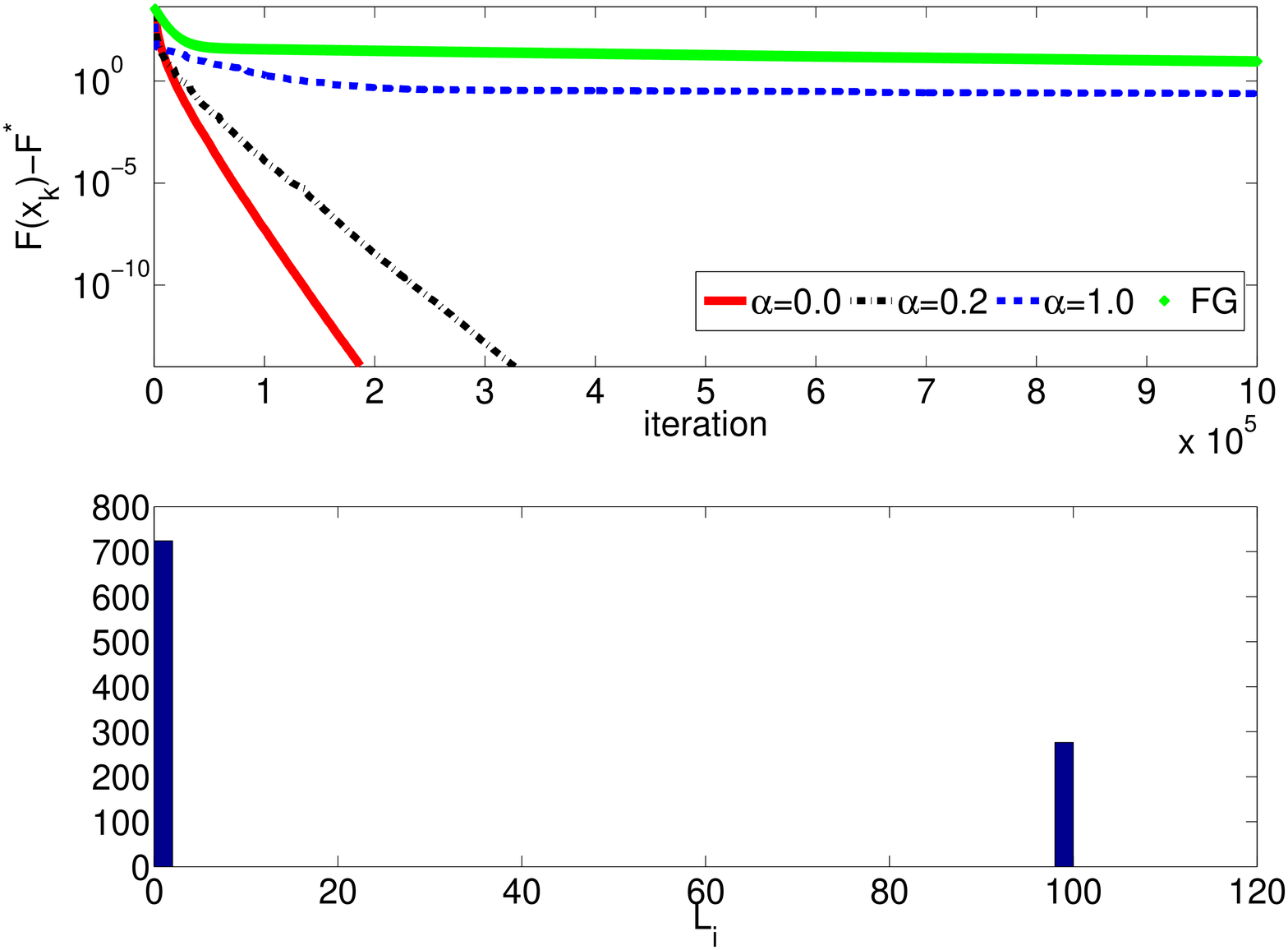}
   &
\includegraphics[width=7cm]{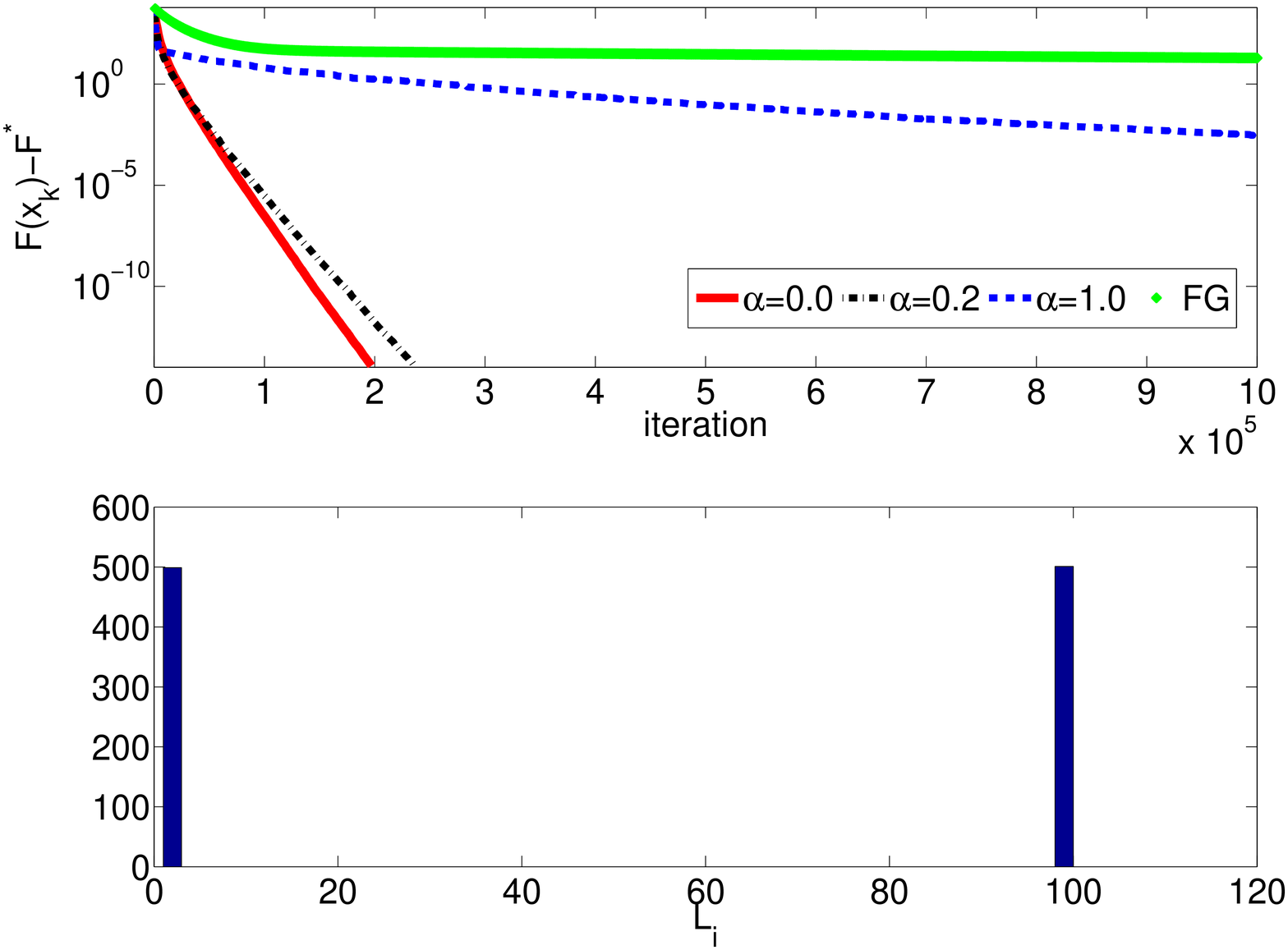}
\end{tabular}
\caption{Comparison UCDC with different choices of $\alpha$ with a full-gradient method (which essentially is UCDC with one component: $n=1$) for four different distributions of the Lipschitz constants $\Lip_i$.}
\label{fig:RC2}
\end{figure}

In the plot in the  2-1 position (resp. 2-2 position) we choose 70\% (resp. 50\%) of the Lipschitz constants $\Lip_i$ to be 1, and the remaining 30\% (resp. 50\%) equal to 100. Again, the $\alpha=0$ and $\alpha = 0.2$ methods give the best long-term performance.

In summary, if fast convergence to a solution with a moderate accuracy us needed, then $\alpha=1$ is the best choice (and is always better than FG). If one desires a solution of higher accuracy, it is recommended to switch to $\alpha = 0$. In fact, it turns out that we can do much better than this using a ``shrinking'' heuristic.


\nadpis{Speedup by shrinking} It is well-known that increasing values of $\lambda$ encourage increased sparsity in the solution of \eqref{eq:sprarseregressionformulation}. In the experimental setup of this section we observe that from certain iteration onwards, the sparsity pattern of the iterates of RCDC is a very good predictor of the sparsity pattern of the optimal solution $x^*$ the iterates converge to. More specifically, we often observe in numerical experiments that for large enough $k$ the following holds:
\begin{equation}\label{eq:shrinking_formula}(x_k^{(i)} = 0) \quad \Rightarrow \quad (\forall l\geq k \quad x_l^{(i)} = (x^*)^{(i)} = 0).\end{equation}
In words, for large enough $k$, zeros in $x_k$ typically stay zeros in all subsequent iterates\footnote{There are various theoretical results on the \emph{identification of active manifolds} explaining numerical observations of this type; see \cite{LewisWright:Composite} and the references therein. See also \cite{Lin:2010:COMS}.} and correspond to zeros in $x^*$. Note that RCDC is \emph{not} able to take advantage of this. Indeed, RCDC, as presented in the theoretical sections of this paper, uses the fixed probability vector $p$ to randomly pick a single coordinate $i$ to be updated in each iteration. Hence, eventually, $\sum_{i:x_k^{(i)}=0} p_i$ proportion of time will be spent on vacuous updates.

Looking at the data in Table~\ref{T:num1} one can see that after approximately $35\times n$ iterations, $x_k$ has the same number of non-zeros as $x^*$ (160,000). What is not visible in the table is that, in fact, the relation \eqref{eq:shrinking_formula} holds for this instance much sooner.
In Figure~\ref{fig:shrinking} we illustrate this phenomenon in more detail on an instance with $m=500$, $n=1,000$ and $\nnz{x^*}=100$.

\begin{figure}[!htp]
 \centering
 \includegraphics[width=7cm]{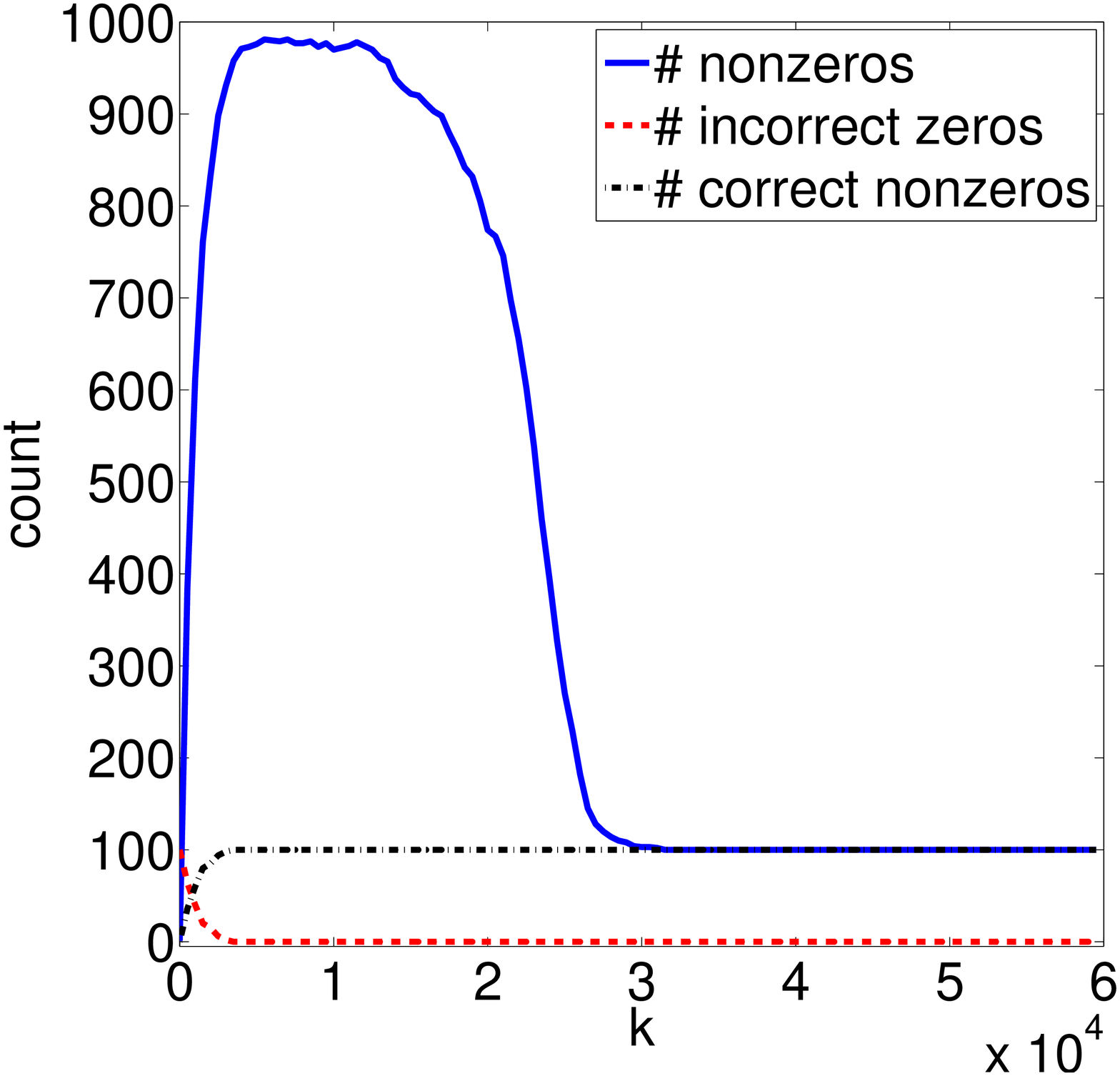}
 \caption{Development of non-zero elements in $x_k$.}
 \label{fig:shrinking}
\end{figure}

First, note that the \emph{number of nonzeros} (solid blue line) in the current iterate, $\# \{i: x_k^{(i)} \neq 0\}$, is first growing from zero (since we start with $x_0=0$) to just below $n$ in about $0.6\times 10^4$ iterations. This value than starts to decrease starting from about $k\approx 15n$ and reaches the optimal number of nonzeros at iteration $k\approx 30n$ and stays there afterwards. Note that the number of \emph{correct nonzeros}, \[cn_k = \# \{i: x_k^{(i)} \neq 0 \; \& \; (x^*)^{(i)}\neq 0 \},\] is increasing (for this particular instance) and reaches the optimal level $\|x^*\|_0$ very quickly (at around $k\approx 3n$). An alternative, and perhaps a more natural, way to look at the same thing is via the  \emph{number of incorrect zeros},
\[iz_k = \# \{i: x_k^{(i)} = 0\; \& \;(x^*)^{(i)}\neq 0 \}.\]
Indeed, we have $cn_k + iz_k = \|x^*\|_0$. Note that for our problem $iz_k \approx 0$ for $k\geq k_0\approx 3n$.

The above discussion suggests that an \emph{iterate-dependent} policy for updating of the probability vectors $p_k$ in Algorithm~\ref{algorithm:RCDM-l1Nonsmooth-example} might help to accelerate the method. Let us now introduce a simple \emph{$q$-shrinking} strategy for adaptively changing the probabilities as follows: at iteration $k\geq k_0$, where $k_0$ is large enough, set
$$
  \vc{p_k}{i} = \vc{\hat{p}_k}{i}(q) \eqdef \left\{
    \begin{array}{ll}
      \frac{1-q}n, & \mbox{if} \ \vc{x_k}{i} = 0, \\
      \frac{1-q}n + \frac q{\nnz{x_k}}, & \mbox{otherwise}.
    \end{array}
  \right.
$$
This is equivalent to choosing $i_k$ uniformly from the set $\{1,2,\dots,n\}$ with probability $1-q$ and uniformly from the support set of $x_k$ with probability $q$. Clearly, different variants of this can be implemented, such as fixing a new probability vector for $k\geq k_0$ (as opposed to changing it for every $k$) ; and some may be more effective and/or efficient than others in a particular context. In Figure~\ref{fig:qshrinking} we illustrate the effectiveness of $q$-shrinking on an instance of size $m=500$, $n=1,000$ with $\nnz{x^*}=50$. We apply to this problem a modified version of RCDC started from the origin ($x_0=0$) in which uniform probabilities are used in iterations $0,\dots,k_0-1$, and $q$-shrinking is introduced as of iteration $k_0$:
\[p_k^{(i)} = \begin{cases}\tfrac{1}{n},       &\text{ for } \quad k = 0,1,\dots,k_0-1,\\
                           \hat{p}_k^{(i)}(q), & \text{ for } \quad k\geq k_0.
\end{cases}\]
We have used $k_0 = 5\times n$.

\begin{figure}[!htp]
 \centering
  \includegraphics[width=7cm]{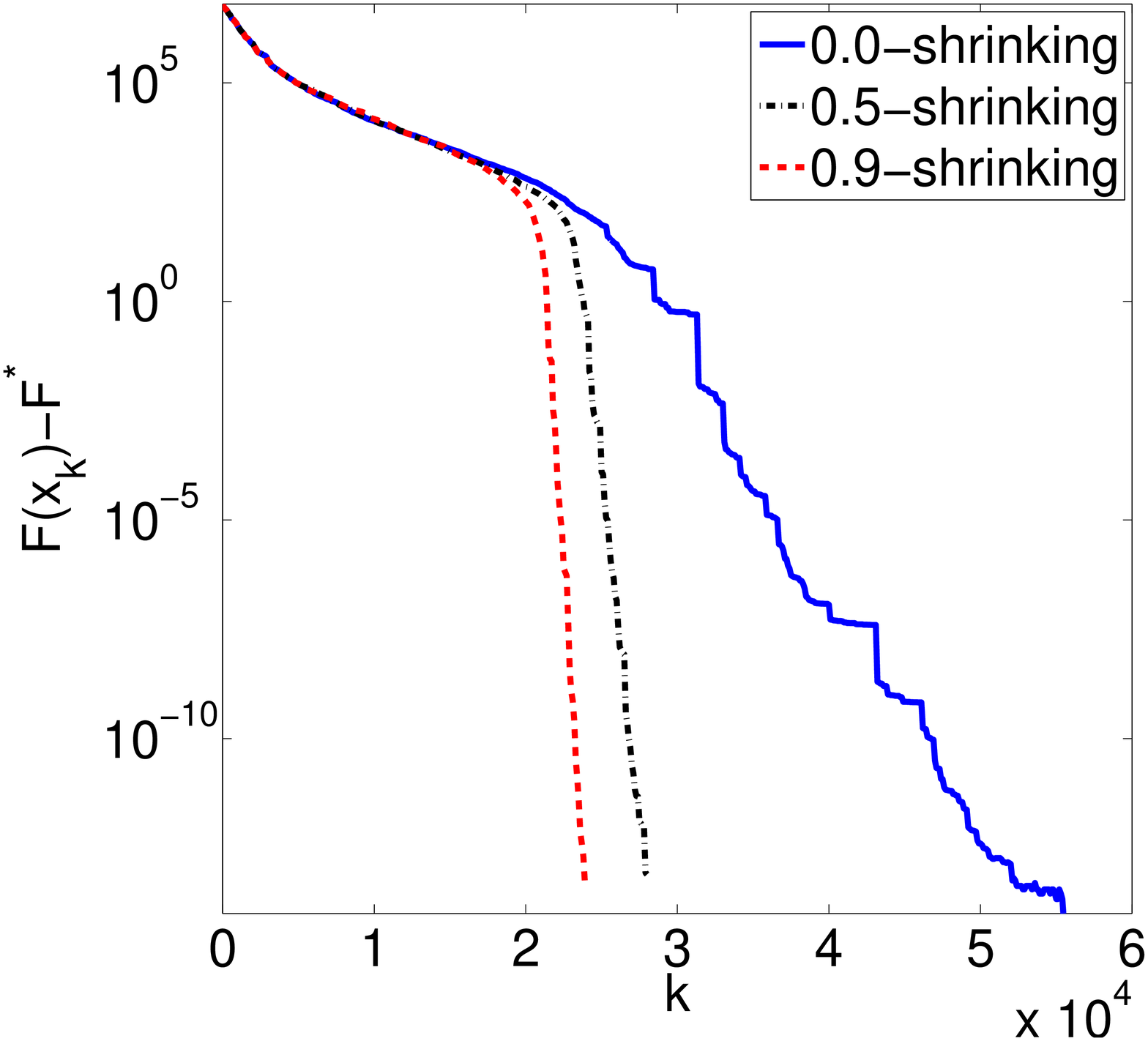}
  \includegraphics[width=7cm]{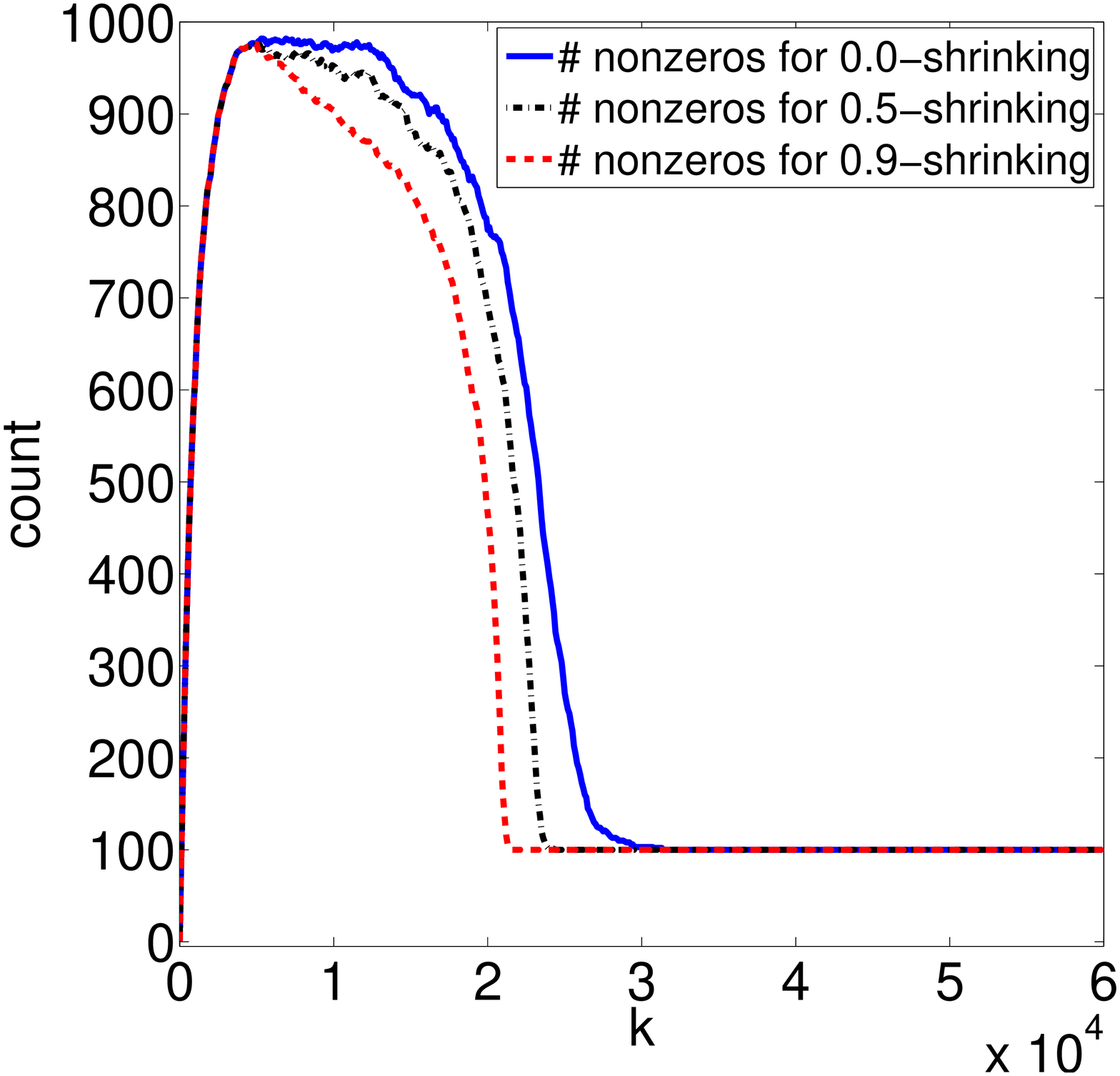}
 \caption{Comparison of different shrinking strategies.}
 \label{fig:qshrinking}
\end{figure}

Notice that as the number of nonzero elements of $x_k$ decreases, the time savings from $q$-shrinking grow. Indeed, $0.9$-shrinking introduces a saving of nearly 70\% when compared to $0$-shrinking to obtain $x_k$ satisfying $F(x_k)-F^*\leq 10^{-14}$. We have repeated this experiment with two modifications: a) a random point was used as the initial iterate (scaled so that $\nnz{x_0} = n$) and b) $k_0=0$. The corresponding plots are very similar to Figure~\ref{fig:qshrinking} with the exception that the lines in the second plot start from $\|x_0\|_0 = n$.


\nadpis{Choice of the initial point} Let us now investigate the question of the choice of the initial iterate $x_0$ for RCDC. Two choices seem very natural: a) $x_0=0$ (the minimizer of $\Psi(x)=\lambda \|x\|_1$) and b) $x_0 = x_{LS}$ (the minimizer of $f(x)=\tfrac{1}{2}\|Ax-b\|_2^2$). Note that the computation of $x_{LS}$ may be as complex as the solution of the original problem. However, if available, $x_{LS}$ constitutes a reasonable alternative to $0$: intuitively, the former will  be preferable to the latter whenever $\Psi$ is dominated by $f$, i.e., when $\lambda$ is small.

In Figure~\ref{fig:warmstartws} we compare the performance of UCDC run on a single instance when started from these two starting points (the solid line corresponds to $x_0=0$ whereas the dashed line corresponds to $x_0 = x_{LS}$). The same instance is used here as in the $q$-shrinking experiments and $\lambda=1$. Starting from $x_{LS}$ gives a $4\times$ speedup for pushing residual $F(x_k)-F^*$ below $10^{-5}$.

\begin{figure}[!htp]
 \centering
 \includegraphics[width=7cm]{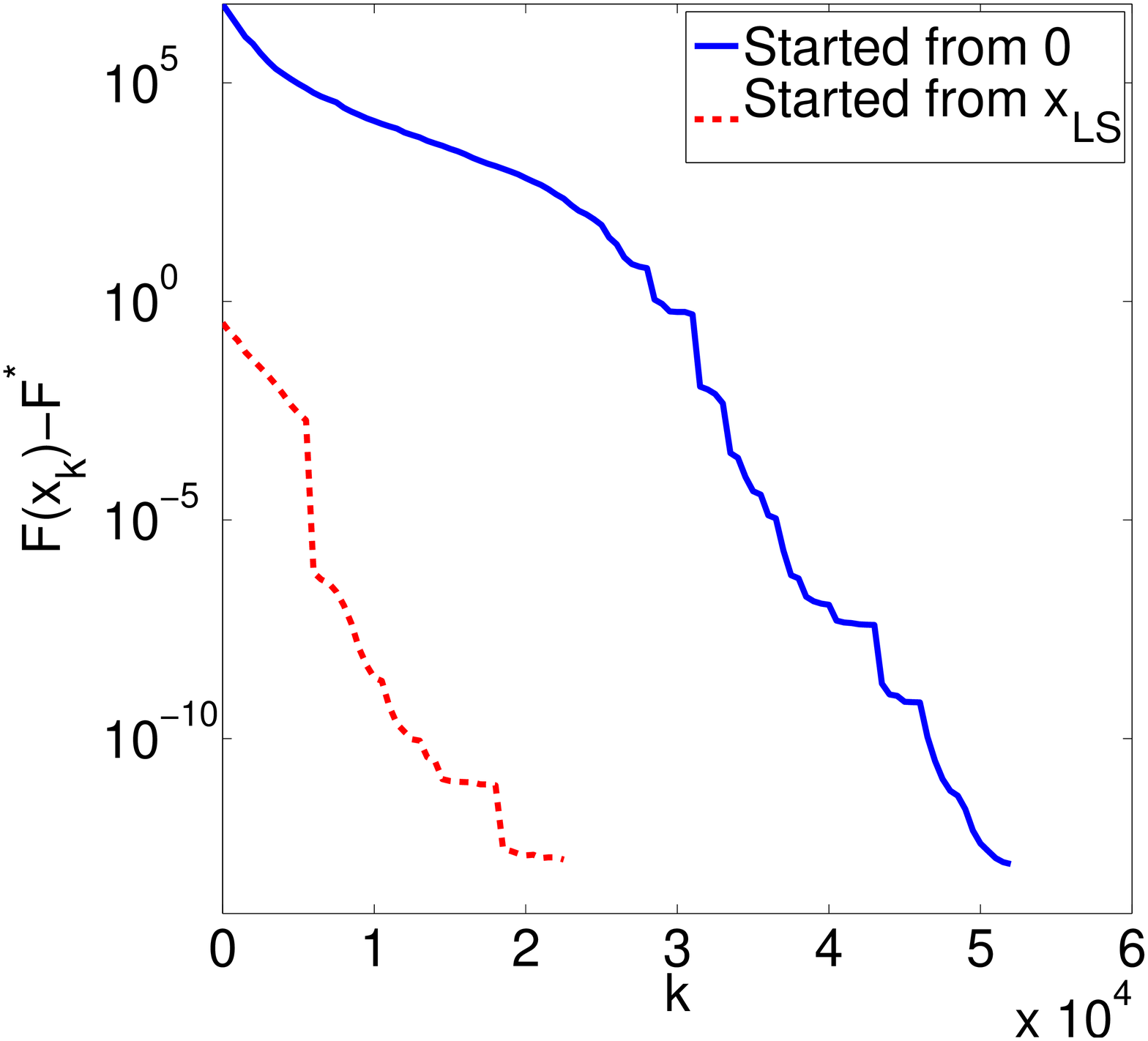}
\caption{Starting from the least squares solution, if available, and if $\lambda$ is small enough, can be better than starting from the origin.}
 \label{fig:warmstartws}
\end{figure}

In Figure~\ref{fig:warmstartwscomaprerandompoint} we investigate the effect of starting UCDC from a point on the line segment between $x_{LS}$ (dashed red line) and $0$ (solid blue line). We generate 50 such points $x_0$, uniformly at random (thin green lines). The plot on the left corresponds to the choice $\lambda = 0.01$, the plot on the right to $\lambda = 1$. Note that $x^*=x_{LS}$ for $\lambda = 0$ and $x^*=0$ when $\lambda \to \infty$.
\begin{figure}[!htp]
 \centering
 \includegraphics[width=7cm]{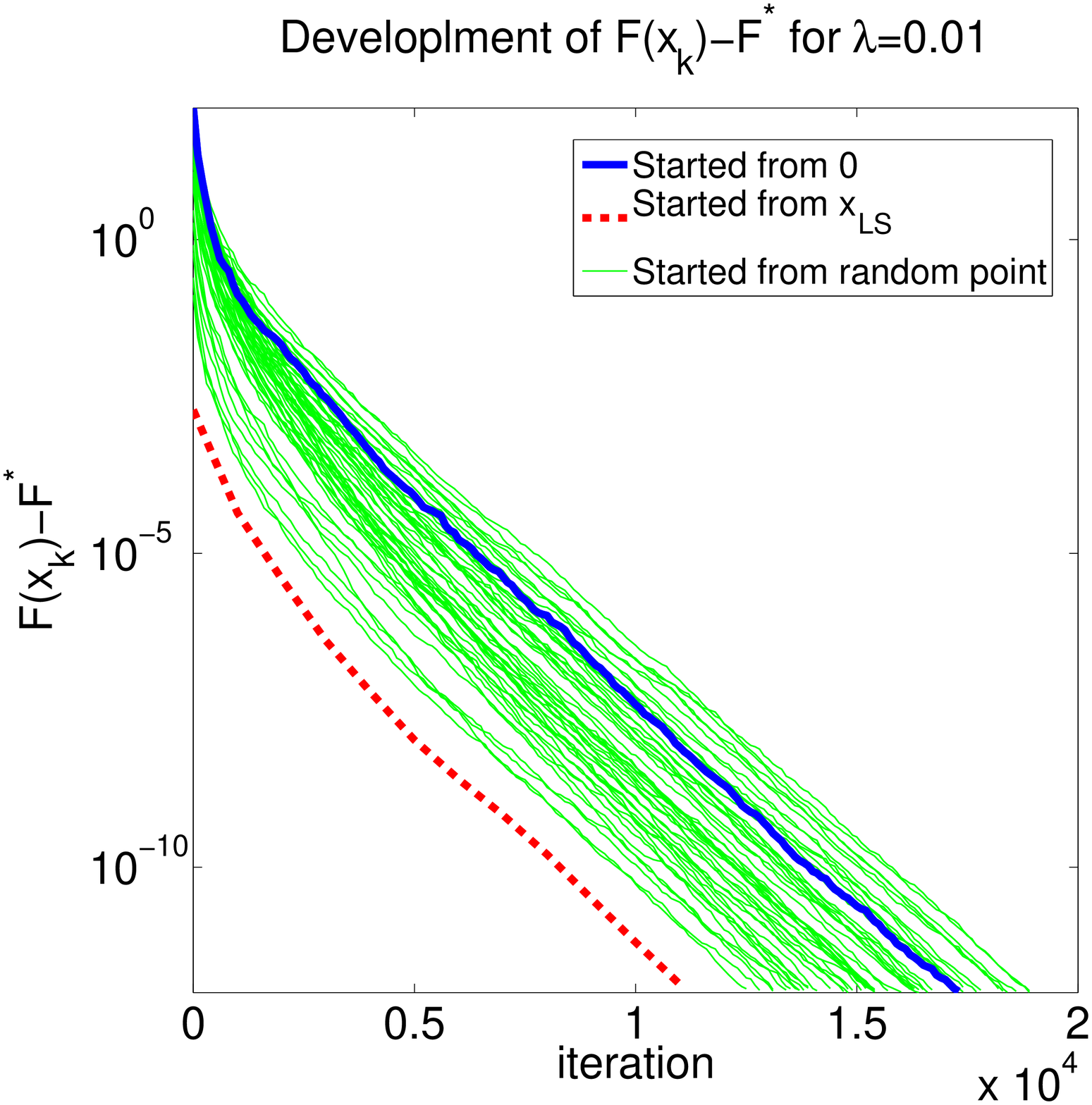}
 \hskip -1cm
 \includegraphics[width=7cm]{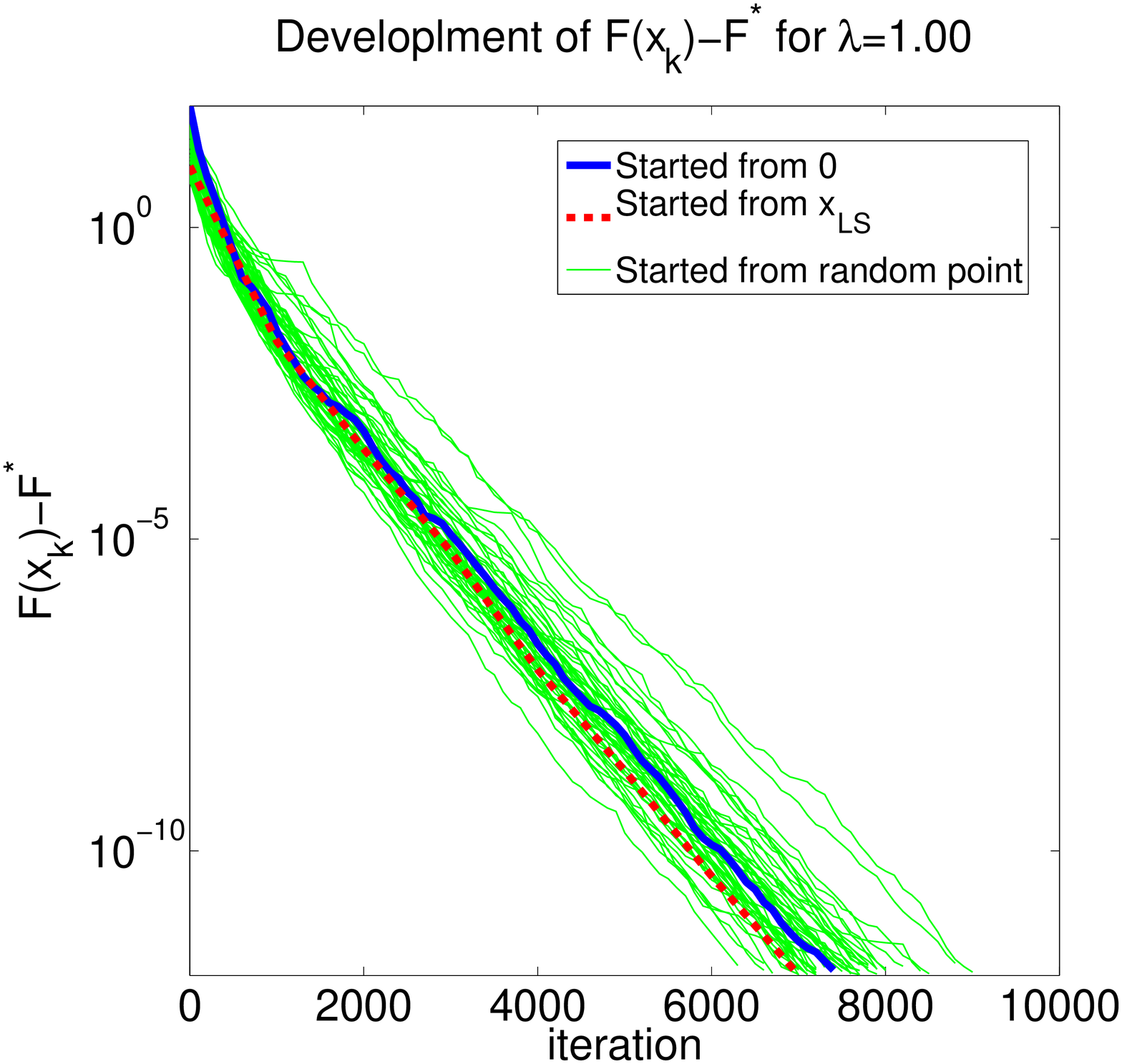}
 \caption{There does not seem to be any advantage in starting UCDC from a point on the line segment between $x_{LS}$ and the origin as opposed to starting it from the better of two endpoints.}
 \label{fig:warmstartwscomaprerandompoint}
\end{figure}


\subsection{Linear Support Vector Machines}\label{sec:exp:SVM}

Consider the problem of training a linear classifier with training examples $\{(x_1, y_1),\dots, (x_m,y_m)\}$,
where $x_i$ are the feature vectors and $y_i \in \{-1, +1\}$ the corresponding labels (classes). This problem is usually cast as an optimization problem of the form \eqref{eq:P},
\begin{equation}\label{eq:SVMproblemFormulation}
  \min_{w\in\R^n} F(w) = f(w) + \Psi(w),
\end{equation}
where
\[f(w) =  \gamma \sum_{i=1}^m \lf(w;x_i,y_i),\]
$\lf$ is a nonnegative convex loss function and $\Psi(\cdot)=\|\cdot\|_1$ for L1-regularized and $\Psi(\cdot)=\|\cdot\|_2$ for L2-regularized linear classifier. Some popular loss functions are listed in Table~\ref{tbl:lossFunctions}. For more details we refer the reader to \cite{Lin:2010:COMS} and the references therein; for a survey of recent advances in large-scale linear classification see \cite{Lin:IEEEsurvey}.

\begin{table}[!htp]
 \centering
 \begin{tabular}{c|c|c}
  $\lf(w;x_i,y_i)$ & name & property\\ \hline \hline
  $\max\{0, 1-y_j w^Tx_j \}$ & L1-SVM loss  (L1-SVM)& $C^0$ continuous\\
  $\max\{0, 1-y_j w^Tx_j \}^2$ & L2-SVM loss  (L2-SVM)& $C^1$ continuous\\
  $\log (1+e^{-y_j w^Tx_j})$ & logistic loss  (LG)& $C^2$ continuous  \\
\end{tabular}
\caption{A list of a few popular loss functions.}\label{tbl:lossFunctions}
\end{table}
Because our setup requires $f$ to be at least $C^1$ continuous, we will consider the L2-SVM and LG loss functions only. In the experiments below we consider the L1 regularized setup.

\nadpis{A few implementation remarks}
The Lipschitz  constants and coordinate derivatives of $f$ for the L2-SVM and LG loss functions are listed in Table~\ref{tbl:lipconstantsandparialderivatives}.

\begin{table}[!htp]
 \centering
 \begin{tabular}{c|c c}
 Loss function  & $\Lip_i$ & $\nabla_i f(w)$ \\
 \hline \hline
        &  & \\
 L2-SVM &   $\displaystyle 2\gamma \sum_{j=1}^m (y_j\vc{x_j}{i})^2$ & \qquad $\displaystyle -2\gamma \cdot \sum_{j\st -y_jw^Tx_j>-1} y_j \vc{x_j}{i}(1 -y_jw^Tx_j)$\\
        &  & \\
 LG     &   $\displaystyle \tfrac{\gamma}{4} \sum_{j=1}^m (y_j\vc{x_j}{i})^2$ & \qquad$\displaystyle -\gamma \cdot \sum_{j=1}^m y_j\vc{x_j}{i}\frac{ e^{-y_j w^Tx_j}}{1+e^{-y_j w^Tx_j}}$\\
        &  & \\
\end{tabular}
\caption{Lipschitz constants and coordinate derivatives for SVM.}\label{tbl:lipconstantsandparialderivatives}
\end{table}

For an efficient implementation of UCDC we need to be able to cheaply update the partial derivatives after each step of the method. If at step $k$  coordinate $i$ gets updated, via $w_{k+1} = w_k+ t e_i$, and we let $\vc{r_k}{j}  \eqdef -y_j w^Tx_j$ for $j=1,\dots,m$, then
\begin{equation}\label{eq:svm_update}\vc{r_{k+1}}{j} = \vc{r_{k}}{j} - t y_j  \vc{x_j}{i}, \quad j=1,\dots,m.\end{equation}
Let $o_i$ be the number of observations feature $i$ appears in, i.e., $ o_i = \#\{j:x_j^{(i)}\neq 0\}$. Then
 the update \eqref{eq:svm_update}, and consequently the update of the partial derivative (see Table~\ref{tbl:lipconstantsandparialderivatives}), requires $O(o_i)$ operations. In particular, in \emph{feature-sparse problems} where $\tfrac{1}{n}\sum_{i=1}^n o_i \ll m$, an average iteration of UCDC will be very cheap.

\subsubsection*{Small scale test} We perform only preliminary results on the dataset \verb"rcv1.binary"\footnote{\url{http://www.csie.ntu.edu.tw/~cjlin/libsvmtools/datasets/binary.html}}.
This dataset has 47,236 features and 20,242 training and 677,399 testing instances. We train the classifier on 90\% of training instances (18,217);
the rest we used for cross-validation for the selection of the parameter $\gamma$. In Table~\ref{tbl:parameterCSelection} we list cross-validation accuracy (CV-A) for various choices of $\gamma$ and testing accuracy (TA) on 677,399 instances. The best constant $\gamma$ is $1$ for both loss functions in cross-validation.

\begin{table}[!htp]
 \centering
 \begin{tabular}{c||r|r|r||r|r|r}
  Loss function & $\gamma$ & CV-A & TA& $\gamma$ & CV-A & TA\\
  \hline \hline
  L2-SVM & 0.0625 & 94.1\% & 93.2\% &  2 & 97.0\% & 95.6\%\\
	     & 0.1250 & 95.5\% & 94.5\% &  4 & 97.0\% & 95.4\%\\
	     & 0.2500 & 96.5\% & 95.4\% &  8 & 96.9\% & 95.1\%\\
	     & 0.5000 & 97.0\% & 95.8\% & 16 & 96.7\% & 95.0\%\\
	     & \textbf{1.0000} & \textbf{97.0\%} & \textbf{95.8\%} & 32 & 96.4\% & 94.9\%\\
\hline
LG	     & 0.5000 &  0.0\% &  0.0\% &  8 & 40.7\% & 37.0\% \\
         & \textbf{1.0000} & \textbf{96.4\%} & \textbf{95.2\%} & 16 & 37.7\% & 36.0\% \\
	     & 2.0000 & 43.2\% & 39.4\% & 32 & 37.6\% & 33.4\% \\
	     & 4.0000 & 39.3\% & 36.5\% & 64 & 36.9\% & 34.1\%  	
\end{tabular}
\caption{Cross validation accuracy (CV-A) and testing accuracy (TA) for various choices of $\gamma$.}\label{tbl:parameterCSelection}
\end{table}

In Figure~\ref{fig:dependenceTAonNumberOfIterations} we present dependence of TA on the number of iterations we run UCDC for (we measure this number in multiples of $n$). As you can observe, UCDC finds good solution after $10\times n$ iterations, which for this data means less then half a second. Let us remark that we did not include bias term or any scaling of the data.

\begin{figure}[!htp]
 \centering
 \includegraphics[width=14cm]{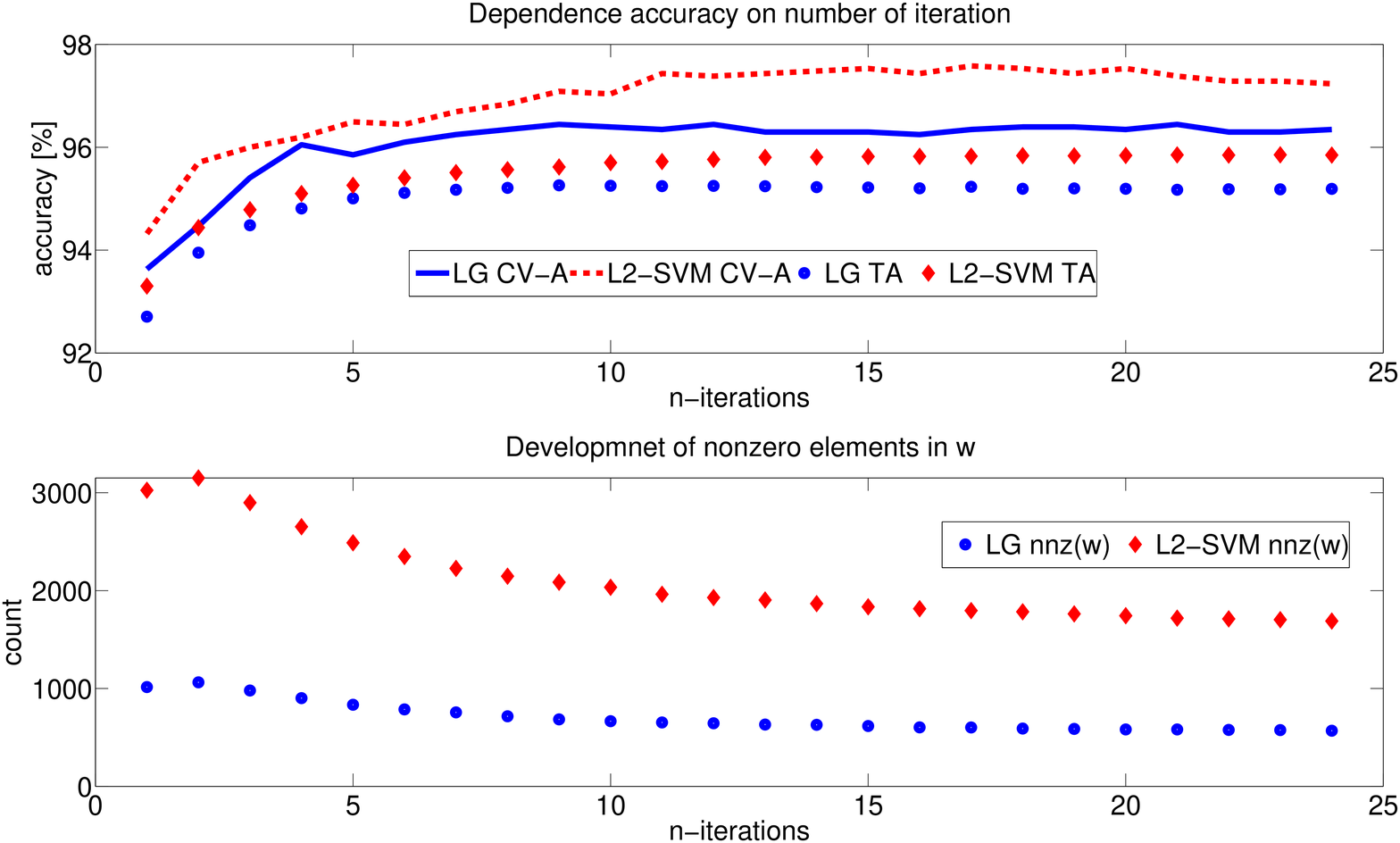}
 \caption{Dependence of tested accuracy (TA) on the number of full passes through the coordinates.}
 \label{fig:dependenceTAonNumberOfIterations}
\end{figure}

\nadpis{Large scale test} We have used the dataset \verb"kdd2010" (bridge to algebra)\footnote{\url{http://www.csie.ntu.edu.tw/~cjlin/libsvmtools/datasets/binary.html}}, which has
29,890,095 features and 19,264,097 training and 748,401 testing instances. Training the classifier on the entire  training set required approximately 70 seconds in the case of L2-SVM loss and 112 seconds in the case of LG loss. We have run UCDC for $n$ iterations. 

\bibliographystyle{plain} 
\bibliography{literature}

\begin{thebibliography}{10}

\bibitem{Bertsekas-Book}
Dimitri~P. Bertsekas.
\newblock {\em {Nonlinear Programming}}.
\newblock Athena Scientific, 2nd edition, September 1999.

\bibitem{Protein2003}
Adrian~A. Canutescu and Roland~L. Dunbrack.
\newblock Cyclic coordinate descent: A robotics algorithm for protein loop
  closure.
\newblock {\em Protein Science}, 12:963--972, 2003.

\bibitem{Lin:2008:CDMLLSVM}
Kai-Wei Chang, Cho-Jui Hsieh, and Chih-Jen Lin.
\newblock Coordinate descent method for large-scale $l_2$-loss linear support
  vector machines.
\newblock {\em Journal of Machine Learning Research}, 9:1369--1398, 2008.

\bibitem{SparseGroupLasso2010}
Jerome Friedman, Trevor Hastie, and Robert Tibshirani.
\newblock A note on the group lasso and a sparse group lasso.
\newblock Technical report, 2010.

\bibitem{Lin:2008:DCDM}
Cho-Jui Hsieh, Kai-Wei Chang, Chih-Jen Lin, S~Sathiya Keerthi, and
  S~Sundararajan.
\newblock A dual coordinate descent method for large-scale linear svm.
\newblock In {\em ICML 2008}, pages 408--415, 2008.

\bibitem{Leventhal:2008:RMLC}
Dennis Leventhal and Adrian~S. Lewis.
\newblock Randomized methods for linear constraints: Convergence rates and
  conditioning.
\newblock {\em Mathematics of Operations Research}, 35(3):641--654, 2010.

\bibitem{LewisWright:Composite}
Adrian~S. Lewis and Stephen~J. Wright.
\newblock A proximal method for composite minimization.
\newblock Technical report, 2008.

\bibitem{Li:CDOMACSGA}
Yingying Li and Stanley Osher.
\newblock Coordinate descent optimization for $l_1$ minimization with
  application to compressed sensing; a greedy algorithm.
\newblock {\em Inverse Problems and Imaging}, 3:487--503, August 2009.

\bibitem{Tseng:CCMCDM:Smooth}
Z.~Q. Luo and Paul Tseng.
\newblock A coordinate gradient descent method for nonsmooth separable
  minimization.
\newblock {\em Journal of Optimization Theory and Applications}, 72(1), January
  2002.

\bibitem{Meier:GroupLasso}
Lukas Meier, Sara Van~De Geer, and Peter Buhlmann.
\newblock The group lasso for logistic regression.
\newblock {\em Journal of the Royal Statistical Society B}, 70:53--71, 2008.

\bibitem{NesterovBook}
Yurii Nesterov.
\newblock {\em {Introductory Lectures on Convex Optimization: A Basic Course
  (Applied Optimization)}}.
\newblock Springer Netherlands, 1 edition.

\bibitem{Nesterov:2007composite}
Yurii Nesterov.
\newblock Gradient methods for minimizing composite objective function.
\newblock CORE Discussion Papers 2007076, Universit\'{e} catholique de Louvain,
  Center for Operations Research and Econometrics (CORE), September 2007.

\bibitem{Nesterov:2010RCDM}
Yurii Nesterov.
\newblock Efficiency of coordinate descent methods on huge-scale optimization
  problems.
\newblock CORE Discussion Paper \#2010/2, Universit\'{e} catholique de Louvain,
  2010.

\bibitem{QinScheinbergGoldfarb:GroupLasso}
Zhiwei~(Tony) Qin, Katya Scheinberg, and Donald Goldfarb.
\newblock Effiient block-coordinate descent algorithms for the group lasso.
\newblock Technical report, 2010.

\bibitem{RT:TTD2011}
Peter Richt\'{a}rik and Martin Tak\'{a}\v{c}.
\newblock Efficient serial and parallel coordinate descent method for
  huge-scale truss topology design.
\newblock Technical report, 2011.

\bibitem{Saha10finite}
Ankan Saha and Ambuj Tewari.
\newblock On the finite time convergence of cyclic coordinate descent methods.
\newblock {\em CoRR}, abs/1005.2146, 2010.

\bibitem{ShalevTewari09}
Shai Shalev-Shwartz and Ambuj Tewari.
\newblock Stochastic methods for $l_1$ regularized loss minimization.
\newblock In {\em Proceedings of the 26th International Conference on Machine
  Learning}, 2009.

\bibitem{SV:Kaczmarz2009}
Thomas Strohmer and Roman Vershynin.
\newblock A randomized kaczmarz algorithm with exponential convergence.
\newblock {\em Journal of Fourier Analysis and Applications}, 15:262--278,
  2009.

\bibitem{Tibshirani:1996}
Robert Tibshirani.
\newblock Regression shrinkage and selection via the lasso.
\newblock {\em Journal of the Royal Statistical Society B}, 58:268--288, 1996.

\bibitem{Tseng:CBCDM:Nonsmooth}
Paul Tseng.
\newblock Convergence of a block coordinate descent method for
  nondifferentiable minimization.
\newblock {\em Journal of Optimization Theory and Applications}, 109:475--494,
  June 2001.

\bibitem{Tseng:CGDMLC:Nonsmooth}
Paul Tseng and Sangwoon Yun.
\newblock A block-coordinate gradient descent method for linearly constrained
  nonsmooth separable optimization.
\newblock 2008.

\bibitem{Tseng:2009b:Nonsmooth}
Paul Tseng and Sangwoon Yun.
\newblock Block-coordinate gradient descent method for linearly constrained
  nonsmooth separable optimization.
\newblock {\em Journal of Optimization Theory and Applications}, 140:513--535,
  2009.
\newblock 10.1007/s10957-008-9458-3.

\bibitem{Tseng:CGDM:Nonsmooth}
Paul Tseng and Sangwoon Yun.
\newblock A coordinate gradient descent method for nonsmooth separable
  minimization.
\newblock {\em Mathematical Programmming, Ser. B}, 117:387--423, 2009.

\bibitem{SemHandbook}
Zaiwen Wen, Donald Goldfarb, and Katya Scheinberg.
\newblock Block coordinate descent methods for semidefinite programming.
\newblock In Miguel~F. Anjos and Jean~B. Lasserre, editors, {\em Handbook on
  Semidefinite, Cone and Polynomial Optimization: Theory, Algorithms, Software
  and Applications}. Springer, forthcoming.

\bibitem{Wright:ABCRRO}
Stephen~J. Wright.
\newblock Accelerated block-coordinate relaxation for regularized optimization.
\newblock Technical report, University of Wisconsin, 2010.

\bibitem{Wright:SRSA}
Stephen~J. Wright, Robert~D. Nowak, and M\'{a}rio A.~T. Figueiredo.
\newblock Sparse reconstruction by separable approximation.
\newblock {\em Trans. Sig. Proc.}, 57:2479--2493, July 2009.

\bibitem{WuLange:2008}
Tong~Tong Wu and Kenneth Lange.
\newblock Coordinate descent algorithms for lasso penalized regression.
\newblock {\em The Annals of Applied Statistics}, 2(1):224--244, 2008.

\bibitem{Lin:2010:COMS}
Guo-Xun Yuan, Kai-Wei Chang, Cho-Jui Hsieh, and Chih-Jen Lin.
\newblock A comparison of optimization methods and software for large-scale
  $l_1$-regularized linear classification.
\newblock {\em Journal of Machine Learning Research}, 11(1):3183--3234, 2010.

\bibitem{Lin:IEEEsurvey}
Guo-Xun Yuan, Ho~Chia-Hua, and Chih-Jen Lin.
\newblock Recent advances of large-scale linear classification.
\newblock Technical report, 2011.

\bibitem{YuanLin:GroupLasso}
Ming Yuan and Yi~Lin.
\newblock Model selection and estimation in regression with grouped variables.
\newblock {\em Journal of the Royal Statistical Society B}, 68:49--67, 2006.

\bibitem{Yun2009}
Sangwoon Yun and Kim-Chuan Toh.
\newblock A coordinate gradient descent method for $ l_1$-regularized convex
  minimization.
\newblock {\em Computational Optimization and Applications}, 48:273--307, 2011.

\bibitem{ZouHastie:elastic-net:2005}
Hui Zhou and Trevor Hastie.
\newblock Regularization and variable selection via the elastic net.
\newblock {\em Journal of the Royal Statistical Society B}, 67:301--320, 2005.

\end{thebibliography}

\end{document}